\def\?[#1]{\textbf{[#1]}\marginpar{\Large{\textbf{??}}}}
\newtheorem{theo}{Theorem}
\newtheorem{prop}{Proposition}[section]
\newtheorem{lemm}[prop]{Lemma}
\newtheorem{rem}{Remark}
\numberwithin{equation}{section}
\newcommand{\mc}{\mathcal}
\newcommand{\rr}{\mathbb{R}}
\newcommand{\nn}{\mathbb{N}}
\newcommand{\cc}{\mathbb{C}}
\newcommand{\hh}{\mathbb{H}}
\newcommand{\zz}{\mathbb{Z}}
\newcommand{\sph}{\mathbb{S}}
\newcommand{\dd}{\mathbb{D}}
\newcommand{\la}{\lambda}
\newcommand{\eps}{\epsilon}
\newcommand{\pl}{\partial}
\newcommand{\x}{\times}
\newcommand{\til}{\widetilde}
\newcommand{\bbar}{\overline}
\newcommand{\cjd}{\rangle}
\newcommand{\cjg}{\langle}
\newcommand{\demi}{\tfrac{1}{2}}
\newcommand{\rmx}{\mathrm{x}}
\newcommand{\bfe}{\mathbf{e}}
\def\indic{\operatorname{1\hskip-2.75pt\relax l}}
\title{Reconstruction formulas for X-ray transforms in negative curvature}
\author{Colin Guillarmou}
\email{cguillar@dma.ens.fr}
\address{DMA, U.M.R. 8553 CNRS, \'Ecole Normale Superieure, 45 rue d'Ulm,
75230 Paris cedex 05, France}
\author{Fran\c cois Monard}
\email{monard@umich.edu}
\address{Dept. of Mathematics, 2074 East Hall. 530 Church St., University of Michigan, 
Ann Arbor, MI 48109-1043, USA} 
\begin{document}
\maketitle

\begin{abstract}
  We give reconstruction formulas inverting the geodesic X-ray transform over functions (call it $I_0$) and solenoidal vector fields on surfaces with negative curvature and strictly convex boundary. These formulas generalize the Pestov-Uhlmann formulas in \cite{PeUh} (established for simple surfaces) to cases allowing geodesics with infinite length on surfaces with trapping. Such formulas take the form of Fredholm equations, where the analysis of error operators requires deriving new estimates for the normal operator $\Pi_0 = I_0^* I_0$. Numerical examples are provided at the end. 
\end{abstract}

\section{Introduction}
For a Riemannian metric $g$ on a manifold $M$ with strictly convex boundary $\pl M$, 
the X-ray transform $I_0f$ of a function $f\in C^0(M)$ 
is the collection of integrals over all finite length geodesics with endpoints on $\pl M$.
For a domain in $\rr^2$ equipped with the Euclidean metric, this is nothing more than 
the Radon transform. A natural inverse problem is to recover $f$ from $I_0f$, ie. to find an inversion procedure for $I_0$. In the Euclidean case, Radon found an inversion formula \cite{Ra}, and this has been extended to hyperbolic spaces by Helgason \cite{He} and Berenstein-Casadio Tarabusi \cite{BeCa}. 

This problem has been studied more generally for \emph{simple metrics}, which are metrics with no conjugate points and such that each geodesic has finite length.  Let $SM=\{(x,v)\in TM; |v|_{g_x}=1\}$ be the unit tangent bundle of $M$ and $\pi_0:SM\to M$ the canonical projection $(x,v)\mapsto x$; if $\nu$ denotes the inward unit pointing vector field to $\pl M$, we define $\pl_\mp SM=\{(x,v)\in SM; \pm \cjg v,\nu\cjd_g>0\}$ the inward (-) and outward (+) boundaries. Then 
if $\varphi_t$ denotes the geodesic flow on $SM$ at time $t$ and if $g$ is simple, we can 
view $I_0$ as the map 
\begin{equation}\label{defI0intro}
I_0 : C^0(M)\to C^0(\partial_-SM), \quad I_0f(x,v)=\int_{0}^{\ell(x,v)}f(\pi_0(\varphi_t(x,v)))dt
\end{equation}
where $\ell(x,v)$ is the length of the geodesic $\pi_0(\varphi_{t}(x,v))$ relating boundary points of 
$M$. The injectivity of $I_0$ for simple metrics has been proved by Mukhometov \cite{Mu} and Anikonov \cite{An} (see also the book by Sharafutdinov \cite{Sh}). More recently Pestov-Uhlmann \cite{PeUh} gave an approximate inversion formula in dimension $2$ for simple metrics, which is exact when the curvature of $g$ is constant. Krishnan \cite{Kr} then showed that the Pestov-Uhlmann formula can be made exact in a small enough $C^3$-neighborhood of a given simple metric $g_0$ with constant curvature, but the bound is not explicit and depends for instance on the diameter of $(M,g_0)$. The Pestov-Uhlmann approach, exploiting the interplay between transport equations on $SM$ and the fiberwise Hilbert transform, was used by the second author to derive further inversion formulas for certain types of weighted X-ray transforms on simple surfaces \cite{Mo2,Mo3}.

Manifolds with geodesics having infinite length are called \emph{manifolds with trapping}. For instance any negatively curved Riemannian manifold with strictly convex boundary which have non contractible loops have closed geodesics and thus geodesics with infinite lengths. In general we call $\Gamma_\pm\subset SM$ the set 
of points $(x,v)$ such that $\varphi_{\mp t}(x,v)$ belongs to the interior $SM^\circ$ of $SM$ for all $t>0$, which means that the geodesic $\cup_{t>0}\pi_0(\varphi_{\mp t}(x,v))$ has infinite length. The trapped set $K\subset SM^\circ$ is defined by $\Gamma_-\cap \Gamma_+$ and consists of  
geodesics never touching the boundary. When $g$ has negative curvature,
 $K$ is a hyperbolic set for the flow in the sense of \eqref{hyperbolicK} and $g$ has no conjugate points.
In the recent work \cite{Gu2}, the first author proved that for a class of manifolds 
including those with negative curvature (see \eqref{assumptions} for the precise assumptions), the $X$-ray transform $I_0f$  can be defined by the expression \eqref{defI0intro}� outside 
$\Gamma_-\cap \pl_-SM$ and $I_0$ extends as a bounded and injective operator 
\[ I_0 : L^p(M) \to L^2(\pl_-SM,d\mu_\nu), \quad \forall p>2,\]
where $d\mu_\nu$ is some smooth Lebesgue type measure on $\pl_-SM$ 
multiplied by a weight vanishing on $S\pl M\subset \pl SM$. In fact, the same result holds as a map $L^2(M)\to L^2(\pl SM)\subset L^2(\pl_-SM,d\mu_\nu)$ by the boundedness proved in Lemma \ref{I_0I_1} below.
For $f\in L^2(SM)$, the odd extension of $I_0f$ is defined as the $L^2(SM)$ element 
\[I_0^{\rm od}f(x,v):=\left\{\begin{array}{ll}
I_0f(x,v) ,  & (x,v)\in \pl_-SM \\
-I_0f(x,-v), & (x,v)\in \pl_+SM
\end{array}\right. .\] 

Similarly, we can integrate $1$-forms along geodesics and we can define the X-ray transform on $1$-forms by 
\[I_1f(x,v):=\int_{0}^{\ell(x,v)}(\pi_1^*f)(\varphi_t(x,v))dt, \quad (x,v)\in \pl_-SM\setminus \Gamma_-\]
where $\pi_1^*: C^\infty(M,T^*M)\to C^\infty(SM)$ is defined by $\pi_1^*f(x,v)=f(x)(v)$, this also extends as  
a bounded operator $I_1 : L^2(M,T^*M) \to L^2(\pl_-SM)\subset L^2(\pl_-SM,d\mu_\nu)$ which is injective on 
the space of divergence free forms.
We let $I_1^*$ be the adjoint operator when $I_1$ is viewed as a map $L^2(M,T^*M) \to L^2(\pl_-SM,d\mu_\nu)$. 

To state our result we need to define two more operators. 
Let $H$ be the Hilbert transform in the fibers of $\pl SM$ (i.e. multiplication by $-i{\rm sign}(k)$ 
on the $k$ Fourier component in the fiber, with ${\rm sign}(0):=0$), and let 
$\mc{S}_g: L^2(\pl_- SM)\to L^2(\pl_+SM)$ be the scattering operator of the flow, defined by 
\[ \mc{S}_gf(x,v):= f(\varphi_{-\ell(x,v)}(x,v)),\quad  (x,v)\notin \Gamma_+.\]

Then we have the following approximate inversion formula extending Pestov-Uhlmann formula to the  setting of manifolds with trapping.
\begin{theo}\label{inversionintro}
Let $(M,g)$ be a Riemannian surface with strictly convex boundary and assume that the trapped set $K$ for the flow is hyperbolic, that $g$ has no conjugate points (these conditions are satisfied in negative curvature). Then there exists an operator $W:L^2(M)\to L^2(M)$ with smooth integral kernel such that for each $f\in L^2(M)$
\begin{equation*}
f+W^2f= -\frac{1}{4\pi} *dI_1^*\mc{S}_g^{-1}(HI_0^{\rm od}f)|_{\pl_+SM},
\end{equation*} 
where $*$ is the Hodge star operator and $d$ the exterior derivative. \end{theo}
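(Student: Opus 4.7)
The plan is to adapt the Pestov-Uhlmann approach of \cite{PeUh}. In the simple case, one solves the transport equation $Xu = -\pi_0^* f$ on $SM$ with vanishing boundary condition on $\pl_+ SM$ by integration along geodesics; on manifolds with trapping this is no longer possible. Instead, I would use the $0$-resolvent $R$ of $X$ constructed in \cite{Gu2}: the function $u := R\,\pi_0^* f$ satisfies $Xu = -\pi_0^* f$ modulo a smoothing residue concentrated microlocally on the stable/unstable manifolds of the trapped set $K$, and that residue will be repackaged below as the correction $W^2 f$.

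For $f\in L^2(M)$, set $u := R\pi_0^* f$. By the regularity estimates of \cite{Gu2}, $u$ has well-defined traces on $\pl SM$ and satisfies
\[
u|_{\pl SM} - (u\circ A)|_{\pl SM} \;=\; I_0^{\rm od} f, \qquad A(x,v):=(x,-v),
\]
up to the trapping contribution; this encodes that the difference of $u$ along the two orientations of each non-trapped boundary geodesic recovers the X-ray data. Applying the fiberwise Hilbert transform $H$ and the Pestov-Uhlmann commutator identity
\[
[H,X]\,u \;=\; X_\perp\bigl(\pi_0^*\pi_{0*}u\bigr) + \pi_0^*\pi_{0*}(X_\perp u),
\]
together with $H\pi_0^* = 0$ on fiberwise constants, yields $X(Hu) = -X_\perp(\pi_0^*\pi_{0*}u) - \pi_0^*\pi_{0*}(X_\perp u)$ modulo smoothing. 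Pairing this identity against $\pi_1^*\alpha$ for a test $1$-form $\alpha$ on $M$ and applying Santal\'o's formula, the left-hand side becomes boundary integrals of the form $\langle I_1^*(Hu|_{\pl_\pm SM}),\alpha\rangle$; using the boundary identity above together with the fact that $H$ commutes with $A$, these collapse into $\langle I_1^*\mc{S}_g^{-1}(HI_0^{\rm od}f|_{\pl_+ SM}),\alpha\rangle$. The right-hand side, after integration by parts on $M$ using the surface structure (where $X_\perp$ on fiber-constant functions corresponds to $*d$ applied to the base function), gives a pairing against $f$, with the normalization $-1/(4\pi)$ tracked through the Santal\'o constant and $H^2 = -\mathrm{Id} + \pi_0^*\pi_{0*}$ on fibers.

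The main obstacle is the identification of the smoothing error as exactly $W^2 f$ for an operator $W$ with smooth integral kernel. The trapping residue of $R$ has wavefront set on $\Gamma_\pm$, and one must show that after composing with $\pi_0^*$, $\pi_{0*}$, boundary trace, $\mc{S}_g^{-1}$ and $*d\,I_1^*$, the result is a globally smoothing operator on $M$; this is where the new estimates on $\Pi_0 = I_0^* I_0$ announced in the abstract are expected to play the central role. The appearance of $W^2$ rather than a single $W$ should reflect the fact that the reconstruction combines both the entering and exiting boundary contributions (equivalently, the forward and backward resolvents), each generating one factor $W$ through an identical microlocal mechanism.
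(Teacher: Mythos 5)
Your proposal misidentifies the mechanism that produces $W$. In the paper, $R_+f(x,v)=\int_0^\infty f(\varphi_t(x,v))\,dt$ is an \emph{exact} solution operator for $Xu=-f$, $u|_{\pl_+SM}=0$ (this is \cite[Section 4.2]{Gu2}): there is no ``smoothing residue'' of an approximate resolvent to repackage, and no trapping correction in the boundary trace identity. The operator $W=\frac{1}{2\pi}{\pi_0}_*X_\perp R_+\pi_0^*$ arises instead from the zeroth Fourier mode $(X_\perp w)_0$ in the Pestov--Uhlmann commutator identity applied to the exact solution $w=R_+\pi_0^*f$: taking the odd part of $[H,X]w$ gives $0=XH_{\rm od}R_+\pi_0^*f+\pi_0^*Wf$; applying $H$ once more gives $f=-(X_\perp H_{\rm od}R_+\pi_0^*f)_0$; and $H_{\rm od}R_+\pi_0^*f$ is then identified as the unique $L^1$ solution of the transport equation with source $-\pi_0^*Wf$ and boundary value $\tfrac12 HI_0^{\rm od}f$ on $\pl_+SM$, namely $R_+\pi_0^*Wf+\tfrac12 I^*\mc{S}_g^{-1}(HI_0^{\rm od}f)|_{\pl_+SM}$. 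Applying $\frac{1}{2\pi}{\pi_0}_*X_\perp$ to this produces the second factor of $W$. Both factors thus come from the same forward resolvent $R_+$; the heuristic that $W^2$ reflects a pairing of forward and backward (or entering and exiting) contributions is not what happens. The duality-with-Santal\'o argument you sketch is also not needed: the formula follows from the uniqueness of $L^1$ solutions to the transport problem, plus the identities ${\pi_1}_*I^*=I_1^*$ and ${\pi_0}_*X_\perp=*d\,{\pi_1}_*$.

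The attribution of the smoothness of $W$ to the estimates on $\Pi_0=I_0^*I_0$ is also off. Those estimates (Theorem \ref{Theo2}, Proposition \ref{normW} and the Appendix) serve only to bound $\|W\|_{L^2\to L^2}$ so that ${\rm Id}+W^2$ can be inverted by Neumann series near constant curvature; they play no role in Theorem \ref{inversionintro}. The smoothness of the Schwartz kernel of $W$ is proved directly in Proposition \ref{regW}: one splits $R_+=R_+^1+e^{\eps X}R_+$, uses the wavefront set description of $R_+$ from \cite{DyGu} (conormal to the diagonal, the flow-out $\Omega_-$, and $E_-^*\times E_+^*$) together with the absence of conjugate points to show that $(\pi_0\otimes\pi_0)_*$ of the second piece is smooth away from $\{d_g(x,x')=\eps\}$, and computes the kernel of the first piece explicitly via Jacobi fields as $-b^{-1}V(a/b)$, checking by Taylor expansion that it extends smoothly across the diagonal. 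Since your sketch leaves precisely this point --- the identification of the error as $W^2f$ with $W$ smoothing --- as ``the main obstacle,'' and proposes to resolve it with tools that are not the relevant ones, the proposal as written does not establish the theorem.
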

We also have an inversion formula for divergence free $1$-forms, see Theorem \ref{inversion}.
We next prove a upper bound on the $L^2\to L^2$ norm of $W$ in a neighborhood of constant negatively curved metrics, showing that $({\rm Id}+W^2)$ can be inverted by the Neumann series, and thus providing an exact inversion formula for $I_0$. In constant curvature, the trapped set $K$ is a fractal set with 
Hausdorff dimension $\dim_{{\rm Haus}}(K) \in [1,3)$ when $K\not=\emptyset$ (see \cite{Pat,Su}).
\begin{theo}\label{Theo2}
Let $(M,g_0)$ be a manifold with strictly convex boundary and constant negative curvature $-\kappa_0$ and trapped set $K$, and let $\delta=\demi(\dim_{{\rm Haus}}(K)-1)\in [0,1)$. 
Then for each $\la_1,\la_2\in (0,1)$ so that 
$1\geq \la_1\la_2>\max(\delta,\demi)$, there is a an explicit constant $A(\delta,\la_1,\la_2)$ depending only on $\delta,\la_1,\la_2$
such that for all metric $g$ on $M$ with strictly convex boundary and Gauss curvature $\kappa(x)$ satisfying 
\[ \la_1^{2}g_0\leq g\leq \la_1^{-2} g_0, \quad  \kappa(x) \leq -\la_2^2\kappa_0, \quad 
|d\kappa|_{L^\infty}\leq A(\delta,\la_1,\la_2)\kappa_0^{3/2}\]
the remainder operator $W$ obtained in Theorem \ref{inversionintro} has norm 
$||W||_{L^2(M,g)\to L^2(M,g)}<1$ and thus the inversion formula 
\eqref{formulainv0} allows to recover $f\in L^2(M)$ from $I_0f$ by a convergent series. When $\delta<1/2$, the constant $A(\delta,\la_1,\la_2)$ does not depend on $\delta$.
\end{theo}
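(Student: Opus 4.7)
The plan is to make the operator $W$ from Theorem \ref{inversionintro} explicit, decompose it so as to separate the trapped-set contribution from the curvature perturbation, and bound each piece by combining the $L^2$-estimates on $\Pi_0 = I_0^*I_0$ derived earlier in the paper with a metric-comparison argument between $g$ and $g_0$.

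First I would revisit the proof of Theorem \ref{inversionintro}: the operator $W$ is built from the commutator $[H, X]$ of the fiberwise Hilbert transform with the geodesic vector field $X$, together with boundary remainders of the transport equations. The Pestov--Uhlmann commutator identity expresses $[H, X]$ as a sum of a curvature-independent piece and a piece multiplied by the Gauss curvature; writing $\kappa = \kappa_0 + (\kappa - \kappa_0)$ then splits $W = W_0 + W_1$, where $W_0$ is the error operator one would obtain on $(M, g_0)$ itself (nontrivial only because the trapped set is nonempty) and $W_1$ gathers all contributions proportional to $\kappa - \kappa_0$ or $d\kappa$.

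Next I would estimate each piece. For $W_0$, which factors through $\Pi_0$ on $(M, g_0)$, the key input is the $L^2$-bound on $\Pi_0$ established earlier in the paper; this bound remains finite precisely when $\la_1\la_2 > \max(\delta, \demi)$, with the constant being $\delta$-independent for $\delta < \demi$ (the Patterson--Sullivan threshold) and deteriorating as $\delta \to 1$. Combined with the boundedness of $H$, $d$, and $\mc{S}_{g_0}$, this yields $\|W_0\|_{L^2(M, g_0)} \leq B_0(\delta, \la_1, \la_2) < 1$ uniformly in the admissible range. For $W_1$, one pulls out a factor $|d\kappa|_{L^\infty}/\kappa_0^{3/2}$ (the exponent $3/2$ arising naturally from the Jacobi-field length scale $\kappa_0^{-1/2}$ of the background combined with the scaling of $d\kappa$) and obtains $\|W_1\|_{L^2(M, g_0)} \leq B_1(\la_1, \la_2)\,|d\kappa|_{L^\infty}/\kappa_0^{3/2}$. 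Transferring from $g_0$ to $g$ using $\la_1^2 g_0 \leq g \leq \la_1^{-2} g_0$ distorts $L^2$-norms by bounded powers of $\la_1$, and the condition $\kappa \leq -\la_2^2 \kappa_0$ preserves the hyperbolicity of the trapped set with a uniform rate $\la_2$, so that both bounds survive with modified constants.

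Combining yields $\|W\|_{L^2(M, g) \to L^2(M, g)} \leq B_0 + B_1\,|d\kappa|_{L^\infty}/\kappa_0^{3/2}$, and choosing $A(\delta, \la_1, \la_2) := (1 - B_0)/B_1$ gives the desired strict inequality $\|W\| < 1$. The main obstacle is controlling $B_0$ strictly below $1$ throughout the admissible parameter range, and in particular near the critical value $\delta = \demi$: this requires the sharp form of the $\Pi_0$-estimate, a careful matching of the hyperbolicity constants of $g$ and $g_0$ through the scattering operator $\mc{S}_g$, and the isolation of the $\delta$-independent portion of the estimate that survives when $\delta < \demi$, which ensures the claimed independence of $A$ from $\delta$ in that regime.
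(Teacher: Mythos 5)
There is a genuine gap, and it starts with your decomposition $W=W_0+W_1$. You assert that $W_0$, the error operator of the background metric $g_0$, is ``nontrivial only because the trapped set is nonempty,'' and you then spend the bulk of the argument worrying about keeping $B_0=\|W_0\|$ strictly below $1$ near $\delta=\demi$. But Proposition \ref{regW} shows that $W$ vanishes \emph{identically} whenever the Gauss curvature is constant, trapped set or not (the kernel of $W$ is built from $V(a/b)$ for the Jacobi solutions $a,b$ of \eqref{champsjacobi}, and these are independent of $(x,v)$ in constant curvature, so $V(a/b)\equiv 0$). Hence $W_0=0$, there is no $B_0$ to control, and the threshold $\la_1\la_2>\max(\delta,\demi)$ has nothing to do with making a constant-curvature error small: it is exactly the condition under which the norm of $\Pi_0=I_0^*I_0$ is \emph{finite} (convergence of the Poincar\'e series over $\Gamma$ and $L^2$-boundedness of the model operator $\Pi_0^\la$ computed in Lemma \ref{Pi0hyp}). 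Your reading of the Pestov--Uhlmann commutator as ``a curvature-independent piece plus a piece multiplied by the curvature'' is also not how the curvature enters: $[H,X]w=X_\perp w_0+(X_\perp w)_0$ carries no explicit curvature, and the dependence appears only through the Jacobi fields in the Schwartz kernel of $W=\frac{1}{2\pi}{\pi_0}_*X_\perp R_+\pi_0^*$.

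The missing quantitative step is the pointwise estimate of Proposition \ref{normW}: by the Duhamel formula \eqref{Va/b}, $V(a/b)$ is an integral of $\kappa_\perp=X_\perp\pi_0^*\kappa$ against products of Jacobi fields, and the Riccati/Sturm comparison in curvature $\leq-\kappa_0$ gives the uniform bound $|V(a/b)|\leq \tfrac{2}{3}|d\kappa|_{L^\infty}/\kappa_0$, whence the purely multiplicative inequality $\|W\|_{L^2\to L^2}\leq \tfrac{|d\kappa|_{L^\infty}}{3\kappa_0}\|\Pi_0\|_{L^2\to L^2}$. Combined with Theorem \ref{normL^2Pi0}, which bounds $\|\Pi_0\|$ for $g$ near $g_0$ by comparison of the kernel $2/b$ with the hyperbolic kernel and the exact computation of Lemma \ref{Pi0hyp} (this is where the $\kappa_0^{-1/2}$ and the $\delta$-dependence for $\delta>\demi$ come from), one gets $\|W\|<1$ as soon as $|d\kappa|_{L^\infty}\leq A\kappa_0^{3/2}$. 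Note finally that a remainder ``proportional to $\kappa-\kappa_0$,'' as in your $W_1$, could not close the argument: the hypotheses of the theorem control only $d\kappa$ and give no diameter-independent bound on $\|\kappa-\kappa_0\|_{L^\infty}$, and diameter-independence is precisely the point of the result.
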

The constant $A(\delta,\la_1,\la_2)$ is given by $A(\delta,\la_1,\la_2)=3\la_1^4/(\la_2C(\delta,\la))$
where $1-\la=\la_1\la_2$ and $C(\delta,\la)$ is the constant given in \ref{bound1}. This shows that there is an explicit neighborhood of the space of constant curvature metrics where the inversion formula can be made exact. Even when $K=\emptyset$ the result is new since our bounds on the size of the neighborhood of constant curvature metrics is independent of the volume and the diameter of $(M,g)$, which was not the case in the previous works for simple metrics \cite{Kr,Mo2}. To prove such a bound, we compute on convex co-compact hyperbolic surfaces an operator closely related to $(I_{0}^\la)^*I_0^{\la}$ in terms of the Laplacian on the surface, where $I_0^\la$ is the X-ray transform with a constant attenuation $\la\in [0,\min(\demi,1-\delta))$. This is done in the Appendix.
Finally, we remark that is is quite possible that Helgason type inversion for $I_0$ on $\hh^2$ could be applied to deal with the case of  constant curvature metrics (passing to the universal cover), but that would involve checking that the inversion formula applies to functions which are periodic by a discrete group, thus not tending to $0$ at infinity of $\hh^2$, and in any case this would not work in variable curvature, unlike our formula.

\textbf{Acknowledgement.} C.G. is partially supported by ANR grant ANR-13-JS01-0006 and ANR-13-BS01-0007-01. FM is partially supported by NSF grant DMS-1514820. Part of this work was done during the research program Inverse Problems at IHP, Paris, Spring 2015.

\section{Preliminaries} 
\subsection{Geometry of $SM$ and geodesic flow}
Let $(M,g)$ be an oriented surface with strictly convex boundary and let 
$SM$ be its unit tangent bundle with canonical projection $\pi_0: SM\to M$. 
We will assume that 
\begin{equation}\label{assumptions}
\begin{split}
1) & \,\, g \textrm{ has no conjugate points }\\
2) & \,\, K \textrm{ is a hyperbolic set for the flow}
\end{split}\end{equation}
Notice that these assumptions are satisfied if $g$ has negative curvature, see \cite{Gu2}. 
We denote by $X$ the geodesic vector field on $SM$ and $\varphi_t$ its flow at time $t$.
Recall that hyperbolicity of the flow means that there is a continuous, flow invariant, decomposition on 
$K$
\[\forall y\in K, \quad  T_y(SM)=\rr X(y)\oplus E_s(y)\oplus E_u(y)\] 
where $E_s(y)$ and $E_u(y)$ are the stable and unstable subspaces satisfying for each $y\in K$
\begin{equation}\label{hyperbolicK} 
\begin{gathered}
||d\varphi_t(y)w||\leq Ce^{-\gamma |t|}||w||,\quad  \forall t>0, \forall w\in E_s(y),\\
||d\varphi_t(y)w||\leq Ce^{-\gamma |t|}||w||,\quad  \forall t<0, \forall w\in E_u(y).
\end{gathered}
\end{equation} 
some uniform $C,\gamma>0$.
Let $V$ be the vertical vector field on $SM$ defined by $Vf(x,v)=\pl_\theta(f(R_\theta(x,v)))|_{\theta=0}$ where $R_\theta(x,v)$ is the $+\theta$ rotation in the fibers of $SM$.  Locally, in isothermal coordinates $(x_1,x_2)$ on $M$, the metric can be written as 
$g=e^{2\phi}(dx_1^2+dx_2^2)$ for some smooth function $\phi$, 
we have associated coordinates $(x_1,x_2, \theta)$ on $SM$ if $\theta\in S^1$
parametrizes the fibers elements by $v=e^{-\phi}(\cos(\theta)\pl_{x_1}+\sin(\theta)\pl_{x_2})$; 
in these coordinates $V$ is simply given by $\pl_\theta$.  
The tangent space $TSM$ of $SM$ is spanned by the vector fields 
$X,V,X_\perp$ where $X_\perp:=[X,V]$; the vectors $X,X_\perp$ span the horizontal space $\mc{H}$, while $V$ spans the vertical space $\mc{V}$ in $TSM$. The Sasaki metric $G$ on $SM$ is the metric such that $(X,X_\perp,V)$ 
is orthonormal. The Liouville measure $\mu$ on $SM$ is the Riemannian measure of the metric $G$ and
we denote by $\mu_{\pl SM}$ the measure induced by $G$ on the boundary $\pl SM$ of $SM$. We define 
$L^p(SM)$ and $L^p(\pl SM)$ the $L^p$ spaces with respect to $\mu$ and $\mu_{\pl SM}$. There is another natural measure $\mu_\nu$ on $\pl SM$ which is given by 
\begin{equation}\label{dmunu} 
d\mu_{\nu}(x,v):= |\cjg v,\nu\cjd_g|\, d\mu_{\pl SM}(x,v).
\end{equation}
where $\nu$ is the inward pointing normal vector field to $\pl M$. Let us also define the incoming (-), 
outgoing (+) and glancing (0) boundaries of $SM$ 
\[ \begin{gathered}
\pl_\mp SM:=\{ (x,v)\in \pl SM;   \pm \cjg \nu,v\cjd_g > 0\},\quad 
\pl_0SM=\{ (x,v)\in \pl SM;  \cjg\nu,v\cjd_g=0\}.
\end{gathered}\]

Define $(M_e,g)$ to be a small extension of $M$ with no conjugate points and with strictly convex boundary, and so that each geodesic with initial point $(x,v)\in \pl_+SM\cup \pl_0SM$ goes in finite time to $\pl SM_e$; 
the existence of $M_e$ is proved in Section 2.1 and Lemma 2.3 of \cite{Gu2}. We keep the notation $X$ for 
the vector field of the geodesic flow on $SM_e$. The Liouville measure $\mu$ extends to $SM_e$. 
For each point $(x,v)\in SM$, define the time of escape of $SM$ in positive (+) and negative (-) time:
\begin{equation}\label{ellpm} 
\begin{gathered}
\ell_+ (x,v)=\sup\, \{ t\geq 0; \varphi_{t}(x,v)\in SM\}\subset [0,+\infty],\\
\ell_- (x,v)=\inf\, \{ t\leq 0; \varphi_{t}(x,v)\in SM\}\subset [-\infty,0]
\end{gathered}
\end{equation}
and the incoming (-) and outgoing (+) tail are defined by 
\[ \Gamma_-= \{(x,v)\in SM; \ell_+(x,v)=\infty\} ,\quad \Gamma_+= \{(x,v)\in SM; \ell_-(x,v)=-\infty\}.\]
We define $\Gamma_\pm$ as well on $SM_e$ in the same way, so that they extend those on $SM$,
and they are closed set in $SM_e$ are we keep the same notation $\Gamma_\pm$ for both $SM$ and $SM_e$. 
The trapped set is $K=\Gamma_-\cap \Gamma_+\subset SM^\circ$ and is invariant by the flow.
We can define the extended stable bundle $E_-\subset T_{\Gamma_-}SM_e$ 
over $\Gamma_-$ satisfying $E_-|_{K}=E_s$ and 
\[\forall y \in \Gamma_-, \, \forall t>0, \, \forall w\in E_-(y),\,\,  ||d\varphi_t(y)w||\leq Ce^{-\gamma |t|}||w||\]
for some uniform $C,\gamma>0$; the extended unstable bundle $E_+$ over $\Gamma_+$ is defined similarly by taking  negative times instead of $t>0$.
The bundles $E_-^*\subset T^*_{\Gamma_-}SM_e$ and $E_+^*\subset T^*_{\Gamma_+}SM_e$ 
are defined by 
\begin{equation}\label{E_-*} 
E_-^*(E_-\oplus \rr X)=0, \quad E_+^*(E_+\oplus \rr X)=0.
\end{equation}
It follows from \cite{BoRu} that under the assumptions \eqref{assumptions} 
we have (see \cite[Prop. 2.4]{Gu2} for details)
\[{\rm Vol}_{\mu}(\Gamma_-\cup\Gamma_+)=0, \quad {\rm Vol}_{\mu_\nu}(\pl SM\cap(\Gamma_-\cup\Gamma_+))=0.\] 

\subsection{Hilbert transform and Pestov-Uhlmann relation}
Following  \cite{GK1}, there is an orthogonal decomposition corresponding to  Fourier series in the fibers  
\begin{equation}\label{decomp}
L^2(SM_e)=\bigoplus_{k\in\zz} \Omega_k(M_e), \quad \textrm{ with }Vw_k=ikw_k \textrm{ if }w_k\in \Omega_k(M_e)
\end{equation}
where $\Omega_k$ is the space of $L^2$ sections of a complex line bundle over $M_e^\circ$. 
Similarly, one has a decomposition on $\pl SM$
\begin{equation}\label{decompplSM}
L^2(\pl SM)=\bigoplus_{k\in\zz} \Omega_k(\pl SM), \quad \textrm{ with }V\omega_k=ik\omega_k \textrm{ if }\omega_k\in \Omega_{k}(\pl SM)
\end{equation}
using Fourier analysis in the fibers of the circle bundle. There is a canonical map 
\[ \pi_m^*: C^\infty(M_e,\otimes_S^mT^*M_e)\to C^\infty(SM_e), \quad 
(\pi_m^*u)(x,v):=u(x)(\otimes^m v)\]
and in particular $\pi_0^*$ is just the pull-back by the projection $\pi_0: SM_e\to M_e$. We define the dual map
${\pi_m}_*$ on distributions $C^{-\infty}(SM_e^\circ)$ 
by $\cjg {\pi_m}_*u,f\cjd=\cjg u,\pi_m^*f\cjd$ where the distribution pairing is $\cjg u,\psi\cjd=\int_{SM_e}u\psi d\mu$ when $u\in L^2(SM^\circ_e)$.
We notice that $\Omega_m\oplus \Omega_{-m}=\pi_m^*{\pi_m}_*(L^2(SM_e))$ and it is easily checked that 
\[ w\mapsto \frac{1}{2\pi}\pi_0^*{\pi_0}_*w, \quad\quad  w\mapsto 
\frac{1}{\pi}\pi_1^*{\pi_1}_*w\]
are the orthogonal projection from $L^2(SM_e)$ to respectively $\Omega_0$ and $\Omega_1\oplus \Omega_{-1}$.

The Hilbert transform in the fibers is defined by using the decomposition \eqref{decomp}: 
\[H: C_c^\infty(SM^\circ_e)\to C_c^\infty (SM^\circ_e),\quad H(\sum_{k\in\zz}w_k):=-i\sum_{k\in\zz}{\rm sign}(k)w_k.\]
with ${\rm sign}(0):=0$ by convention. We can extend continuously 
$H$ to $C^{-\infty}(SM^\circ_e)\to C^{-\infty}(SM^\circ_e)$ by the expression
\[ \cjg Hu,\psi\cjd :=-\cjg u,H\psi\cjd , \quad \psi\in C_c^\infty(SM^\circ_e).\]
Similarly, we define the Hilbert transform in the fibers on $\pl SM$
\[H_\pl: C^\infty(\pl SM)\to C^\infty(\pl SM),\quad H_\pl(\sum_{k\in\zz}\omega_k)=-i\sum_{k\in\zz}{\rm sign}(k)\omega_k\]
and its extension to distributions as for $SM_e$. We note that $H$ extends as 
a bounded operator on $L^2(SM_e)$ and $H_\pl$ as a bounded operator on $L^2(\pl SM)$. 
For smooth $w\in C_c^\infty(SM_e^\circ)$ we have that 
\begin{equation}\label{hilbertrest} 
(Hw)|_{\pl SM}=H_{\pl}\,\omega, \quad\textrm{ with } \omega:=w|_{\pl SM}
\end{equation}
and the identity extends by continuity to the space of distributions in $SM_e^\circ$ with wave-front set
disjoint from $N^*(\pl SM)$ since, by \cite[Th. 8.2.4]{Ho}, the restriction map $C^\infty(SM_e^\circ)\to C^\infty(\pl SM)$ 
obtained by pull-back through the inclusion map $\iota:\pl SM\to SM_e^\circ$ extends continuously to the space 
of distributions on $SM_e^\circ$ with wavefront set not intersecting $N^*(\pl SM)$.
The following relation between Hilbert transform and flow was proved by Pestov-Uhlmann \cite[Th. 1.5]{PeUh2} (it holds on any surface): 
\begin{equation}\label{commutator}
\textrm{ if }w\in C^\infty(SM_e^\circ), \quad [H,X]w=X_\perp w_0+(X_\perp w)_0
\end{equation}
where $w_0:=\frac{1}{2\pi}\pi_0^*{\pi_0}_*w$ for $w\in C^{-\infty}(SM_e^\circ)$. 
We can use the odd/even decomposition of distributions with respect to the involution $A(x,v)=(x,-v)$ on 
$SM_e$, $SM$ and $\pl SM$ : for instance $X$ maps odd distributions to even distributions and conversely, 
$H$ maps odd (resp. even) distributions to odd (resp. even) distributions. We set 
$H_{\rm ev} w:=H(w_{\rm ev})$ and $H_{\rm od}w:=H(w_{\rm od})$. We write similarly 
$H_{\pl,{\rm ev}}$ and $H_{\pl,{\rm od}}$ for the Hilbert transform on (open sets of) $\pl SM$ and the relation \eqref{hilbertrest} also holds with $H_{\pl, {\rm ev}}$ replacing $H_{\pl}$ if $w$ is even.
Taking the odd part of \eqref{commutator}, for any $w\in C^{-\infty}(SM_e^\circ)$ 
\begin{equation}\label{oddpart}
\begin{gathered}
H_{\rm ev}Xw-XH_{\rm od}w=(X_\perp w)_0.
\end{gathered}
\end{equation}

\subsection{Boundary value problem and X-ray transform}
For convenience, we extend $M_e$ to a larger with boundary manifold $\hat{M}$ and extend smoothly $X$ to $S\hat{M}$ in a way that the flow of $X$ in $S\hat{M}$ is complete 
and for each $y\in SM_e^0$, if $\varphi_{t_0}(y)\in \pl SM_e$ for some $t_0 >0$, then 
$\varphi_t(y)\in S\hat{M}$ for all $t>t_0$; we refer to \cite[Section 2.1]{Gu2}. Let $\mu$ be the Liouville measure on $M_e$, which we extend smoothly to $S\hat{M}$ in any fashion. The invariance of Liouville measure by the flow in $SM_e$ is satisfied, ie. $\mc{L}_X\mu=0$, so that $X$ is formally skew-adjoint operator when acting on 
$C^\infty_c(SM_e^\circ)$. 

By \cite[Section 4.2]{Gu2}, under the assumptions \eqref{assumptions}, for each $f\in C^0(SM)$, the boundary value problem 
\[ Xu=-f, \quad u|_{\pl_+SM}=0\] 
has a unique solution in $L^1(SM)\cap C^0(SM\setminus \Gamma_-)$ given by $u=R_+f$ with 
\begin{equation}\label{R+} 
R_+f(x,v):= \int_{0}^\infty f(\varphi_t(x,v))dt.
\end{equation}
The operator $R_+: C^0(SM)\to L^2(SM)$ defined by \eqref{R+}� is bounded and the expression \eqref{R+} actually extends as a  bounded map $C^0(SM_e)\to L^2(SM_e)$ as well, see \cite[Proposition 4.2]{Gu2}. 
It has the following mapping properties 
\begin{equation}\label{mappingR+}
\begin{gathered}
R_+ : C^\infty(SM)\to C^\infty(SM\setminus (\Gamma_-\cup \pl_0SM)), \\
R_+: C_c^\infty(SM_e^\circ)\to C^\infty(SM_e\setminus \Gamma_-) \textrm{ and }
{\rm WF}(R_+f)\subset E_-^*  \textrm{ if }f\in C_c^\infty(SM_e^\circ),\\
R_+: H_0^s(SM_e)\to H^{-s}(SM_e) , \quad \forall s\in (0,1/2)
\end{gathered}
\end{equation}
where $E_-^*$ is defined by \eqref{E_-*}.

Define the \emph{scattering map} $S_g$ by 
\[ S_g: \pl_-SM\setminus \Gamma_-\to \pl_+SM\setminus \Gamma_+ , \quad 
S_g(x,v):= \varphi_{\ell_+(x,v)}(x,v)\]
By \cite[Lemma 3.4]{Gu2} the following operator is unitary 
\begin{equation}\label{unitary}
\mc{S}_g: L^2(\pl_-SM,d\mu_\nu)\to L^2(\pl_+SM,d\mu_\nu), \quad \mc{S}_g\omega:= \omega\circ S_g^{-1}.
\end{equation}

The \emph{X-ray transform} $I: C^0(SM)\to C^0(\pl_-SM\setminus \Gamma_-)$ is defined by 
\begin{equation}\label{defXray} 
If:= (R_+f)|_{\pl_-SM}.
\end{equation}
From \cite[Lemma 5.1]{Gu2} and \cite[Prop. 2.4]{Gu2},  
we have that, under the assumptions \eqref{assumptions}, it extends as a bounded operator 
\[ I: L^p(SM)\to L^2(\pl_-SM,d\mu_\nu) \quad \forall p>2.\]
We denote by $I^{\rm od}: L^p(SM)\to L^2(\pl SM,d\mu_\nu)$ its odd continuation defined by 
\begin{equation}\label{Iod}
I^{\rm od}f(x,v):=\left\{\begin{array}{ll}
If(x,v) ,  & (x,v)\in \pl_-SM \\
-If(x,-v), & (x,v)\in \pl_+SM
\end{array}\right.
\end{equation} 
and similarly we define the even continuation by $I^{\rm ev}f(x,v)=If(x,-v)$ for $(x,v)\in \pl_+SM$.
The operator $I^*: L^2(\pl_-SM,d\mu_\nu)\to L^{p'}(SM)$ for all $p'<2$ is the adjoint of $I$, it is obtained as follows: for $\omega\in L^2(\pl_-SM,d\mu_\nu)$,  $I^*\omega$ is the unique $L^1$ solution of 
\[ Xw= 0 , \quad w|_{\pl_-SM}=\omega.\]  
By this we mean that $w$ has an extension in a small neighborhood $U$ of $SM\setminus \pl_0SM$ as an invariant distribution and the restriction $w|_{\pl_-SM}$ makes sense as a distribution since, 
by elliptic regularity, $w$ has wave-front set ${\rm WF}(w)\subset \{\xi \in TU; \xi(X)=0\}$ which does not intersect the conormal $N^*(\pl_-SM)$, as $\rr X\oplus T\pl_-SM=TSM$ over $\pl_-SM$.
A consequence of this is that the unique $L^1$ solution of $Xu=-F$, $u|_{\pl_+SM}=\omega$ with $F\in C^\infty(SM)$ and $\omega\in L^2(\pl_+SM, d\mu_\nu)$ is given by $u=R_+F+I^*\mc{S}_g^{-1}\omega$

Since it will be convenient to apply the Hilbert transform on $L^2(\pl SM)$, we first show 
\begin{lemm}\label{L^2}
Under the assumptions \eqref{assumptions}, the X-ray transform is bounded as a map
\[ I^{\rm od}: L^p(SM)\to L^2(\pl SM), \quad \forall p>2.\]
There exists $\eps>0$ small such that the following operator is bounded
\[\indic_{\{|\cjg v,\nu\cjd|<\eps\}}I^{\rm od}: L^2(SM)\to L^2(\pl SM).\] 
\end{lemm}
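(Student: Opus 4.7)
The strategy is to reduce both statements to a single estimate over a small neighborhood of the glancing set $\pl_0 SM$, exploiting the fact that strict convexity of $\pl M$ forces geodesics entering $SM$ with small $|\cjg v,\nu\cjd_g|$ to exit $M$ in correspondingly short time. This shortness exactly compensates for the singular weight $|\cjg v,\nu\cjd|^{-1}$ relating $d\mu_{\pl SM}$ and $d\mu_\nu$; away from glancing, the two measures are comparable and the already-established bound $I:L^p(SM)\to L^2(\pl_-SM,d\mu_\nu)$ ($p>2$) does the job.

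The key technical input is the convexity lemma: there exist $\eps_0,C>0$ such that
\[ \ell(x,v)\leq C|\cjg v,\nu\cjd_g|\qquad\forall (x,v)\in \pl_-SM\text{ with }|\cjg v,\nu\cjd_g|<\eps_0.\]
This follows from Taylor-expanding a boundary defining function $\rho$ along the geodesic,
\[\rho(\pi_0(\varphi_t(x,v)))=tX\rho(x,v)+\tfrac{t^2}{2}X^2\rho(x,v)+O(t^3),\]
and using that $X^2\rho\leq -c<0$ uniformly on a neighborhood of the compact set $\pl_0 SM$ by strict convexity, so that the next positive root satisfies $\ell(x,v)\sim 2|\cjg v,\nu\cjd_g|/|X^2\rho|$; in particular, for $|\cjg v,\nu\cjd_g|<\eps_0$ the whole geodesic segment stays in a thin boundary collar, well away from the trapped set.

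Given the convexity lemma, Cauchy--Schwarz yields $|If(x,v)|^2\leq \ell(x,v)\int_0^{\ell(x,v)}|f(\varphi_t(x,v))|^2\,dt$. Writing $d\mu_{\pl SM}=|\cjg v,\nu\cjd|^{-1}d\mu_\nu$, the factor $\ell(x,v)/|\cjg v,\nu\cjd|$ is bounded by $C$, and Santal\'o's formula gives
\[\int_{\pl_-SM\cap\{|\cjg v,\nu\cjd|<\eps_0\}}|If|^2\,d\mu_{\pl SM}\leq C\int_{\pl_-SM}\int_0^{\ell(x,v)}|f\circ\varphi_t|^2\,dt\,d\mu_\nu=C\|f\|_{L^2(SM)}^2.\]
The analogous bound on $\pl_+ SM$ for $I^{\rm od}$ follows from the defining symmetry $I^{\rm od}f(x,v)=-If(x,-v)$ and the invariance of $d\mu_{\pl SM}$ under $v\mapsto -v$, proving the second statement.

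For the first statement, combine this with the estimate away from glancing,
\[\|\indic_{\{|\cjg v,\nu\cjd|\geq\eps_0\}}I^{\rm od}f\|_{L^2(\pl SM)}\leq \eps_0^{-1/2}\|I^{\rm od}f\|_{L^2(\pl SM,d\mu_\nu)}\leq C\|f\|_{L^p(SM)},\]
and use $\|f\|_{L^2(SM)}\leq C_p\|f\|_{L^p(SM)}$ (valid for $p>2$ since $SM$ is compact) to absorb the near-glancing piece in the $L^p$ norm. The main obstacle is securing the uniform convexity estimate on $\ell$; once it is in hand, the rest is a direct computation.
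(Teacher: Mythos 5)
Your proof is correct, and while the near-glancing half coincides with the paper's argument, your treatment of the first statement takes a genuinely different route. For the $L^2$ bound on $\{|\cjg v,\nu\cjd|<\eps\}$ you use exactly what the paper does: Cauchy--Schwarz along the geodesic, the strict-convexity estimate $\ell_+\leq C|\cjg v,\nu\cjd|$ near glancing (the paper cites \cite[Lemma 4.1.2]{Sh} rather than redoing the Taylor expansion of a defining function, but the content is the same), and Santal\'o's formula; this is precisely the paper's chain of inequalities specialized to $p=2$. For the full $L^p\to L^2(\pl SM)$ statement, however, the paper does not split the boundary: it first establishes that $\ell_+/|\cjg \nu,v\cjd|\in L^r(\pl SM)$ for every $r<\infty$, combining the convexity bound near glancing with the exponential decay of the escape-time volume function $V(t)=\mc{O}(e^{Qt})$ from \cite[Prop. 2.4]{Gu2}, and then runs a three-exponent H\"older inequality (with $r=p'(p-1)/(p-2)$) together with Santal\'o over all of $\pl_-SM$ at once. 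You instead decompose into glancing and non-glancing regions, absorb the glancing piece via $\|f\|_{L^2}\leq C_p\|f\|_{L^p}$ on the compact $SM$, and on the non-glancing piece simply observe $d\mu_{\pl SM}\leq \eps_0^{-1}d\mu_\nu$ and invoke the already-quoted boundedness $I:L^p(SM)\to L^2(\pl_-SM,d\mu_\nu)$. Your argument is more modular and avoids re-deriving the $L^r$-integrability of $\ell_+/\cjg\nu,v\cjd$, but it leans on the $d\mu_\nu$-boundedness from \cite{Gu2} as a black box (which in that reference ultimately rests on the same volume estimates); the paper's version is self-contained modulo the escape-rate estimate and produces the bound in a single computation. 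Both are valid proofs of the lemma.
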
 
\begin{proof} It suffices to consider $(I^{\rm od}f)|_{\pl _-SM}=If$ as the part on $\pl_+SM$ 
is clearly the same. 
By Proposition 2.4 in \cite{Gu2}, we have $V(t)=\mc{O}(e^{Qt})$ for some $Q<0$ if 
\[V(t):={\rm Vol}\{ (x,v)\in SM; \ell_+(x,v)>t\}.\]
This directly implies that for all $1\leq p<\infty$ , we have 
$\ell_+\in L^p(\pl SM)$. By strict convexity of $\pl M$, there is $\eps>0$ small such that
if $|\cjg \nu,v\cjd|\leq \eps$, one has $\ell_+(x,v)\leq C|\cjg \nu,v\cjd|$ 
for some uniform $C>0$, see \cite[Lemma 4.1.2]{Sh} for example.
We then get 
\begin{equation}\label{inLp}
(x,v)\mapsto \frac{\ell_+(x,v)}{|\cjg \nu,v\cjd|} \in L^p(\pl SM)\,\, \textrm{ for all }p<\infty.
\end{equation}
Using H\"older inequality and Santalo formula, we get for 
$f\in C^\infty(SM)$ and $p>2$, if $1/p'+1/p=1$ and $r:=p'(p-1)/(p-2)$
\[ \begin{split} 
||If||^2_{L^2(\pl_-SM)} \leq &
\int_{\pl_-SM}\Big(\int_{0}^{\ell_+(y)}|f(\varphi_t(y))|^{p}dt\Big)^{\frac{2}{p}}\ell_+(y)^{\frac{2}{p'}}d\mu_{\pl SM}(y)
\\
\leq & \int_{\pl_-SM}\Big(\int_{0}^{\ell_+(x,v)}|f(\varphi_t(x,v))|^{p}dt|\cjg\nu,v\cjd|\Big)^{\frac{2}{p}}
\frac{\ell_+(x,v)^{\frac{2}{p'}}}{|\cjg\nu,v\cjd|^{\frac{2}{p}}}d\mu_{\pl SM}(x,v)
\\
\leq & \Big(\int_{\pl_-SM}\int_{0}^{\ell_+(y)}|f(\varphi_t(y))|^{p}dtd\mu_{\nu}(y)\Big)^{\frac{2}{p}} 
\Big(\int_{\pl_-SM}\Big|\frac{\ell_+(x,v)}{\cjg\nu,v\cjd}\Big|^{\frac{2r}{p'}} d\mu_{\pl SM}\Big)^{\frac{1}{r}} \\
||If||^2_{L^2(\pl_-SM)}  \leq & C_0||f||_{L^p(SM)}^{\frac{2}{p}} 
\end{split}\]
where $C_0<\infty$. This concludes the proof of the first statement. The boundedness of 
$\indic_{\{|\cjg v,\nu\cjd|<\eps\}}I^{\rm od}$ on $L^2$ is direct from the proof above by taking $p=2$ and using 
$\ell_+(x,v)\leq C|\cjg v,\nu\cjd|$ on $\pl_-SM$ if $|\cjg v,\nu\cjd|\leq \eps$ for $\eps>0$ small enough.
\end{proof}

Finally the X-ray transform on functions $I_0$ and on 1-forms $I_1$ are defined as the bounded operators, for 
any $p>2$,
\begin{equation} \label{I_0I_1}
I_0:= I\pi_0^* : L^p(M)\to L^2(\pl_-SM,d\mu_\nu), \quad I_1:= I\pi_1^* : L^p(M;T^*M)\to L^2(\pl_-SM,d\mu_\nu).
\end{equation}
and $I_j^{\rm od}=I^{\rm od}\pi_j^*$ for $j=0,1$. In fact, we can show 
\begin{lemm}\label{I_0L^2}
Under the assumptions \eqref{assumptions}, the X-ray transform on functions and on $1$-forms are bounded as maps
\[ I_0^{\rm od}: L^2(M)\to L^2(\pl SM), \quad I_1^{\rm ev}: L^2(M; T^*M)\to L^2(\pl SM)\]
\end{lemm}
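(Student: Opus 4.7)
The plan is to split $\pl SM$ into a near-glancing region and its complement, handle the former by the second statement of Lemma~\ref{L^2} applied to $\pi_0^*f$ (respectively $\pi_1^*f$), and handle the latter by Cauchy--Schwarz together with Santal\'o's formula and a uniform fiberwise escape estimate for $\ell_+$. By the involution $(x,v)\mapsto(x,-v)$, which preserves $d\mu_{\pl SM}$ and swaps $\pl_\pm SM$, I would reduce to bounding $\|I_0 f\|_{L^2(\pl_- SM)}$ (resp.\ $\|I_1 f\|_{L^2(\pl_- SM)}$) in terms of $\|f\|_{L^2(M)}$ (resp.\ $\|f\|_{L^2(M;T^*M)}$), and fix $\eps>0$ as in the second statement of Lemma~\ref{L^2}.

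On the near-glancing region $U_\eps:=\{|\cjg v,\nu\cjd_g|<\eps\}$, I would apply the second statement of Lemma~\ref{L^2} to $w=\pi_0^* f\in L^2(SM)$, noting that $\|\pi_0^* f\|_{L^2(SM)}=\sqrt{2\pi}\|f\|_{L^2(M)}$; this immediately gives $\|\indic_{U_\eps}I_0 f\|_{L^2(\pl_- SM)}\lesssim \|f\|_{L^2(M)}$, and the same works for $I_1^{\rm ev}$ since $|\pi_1^* f(x,v)|\leq |f(x)|_{g^*}$. On the complement $U_\eps^c$ one has $d\mu_{\pl SM}\leq \eps^{-1}d\mu_\nu$, so it suffices to show that $I_0:L^2(M)\to L^2(\pl_-SM,d\mu_\nu)$ is bounded. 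Applying Cauchy--Schwarz to the defining integral gives $|I_0f(x,v)|^2\leq \ell_+(x,v)\int_0^{\ell_+(x,v)}|f(\pi_0\varphi_t(x,v))|^2 dt$, and since $\ell_+(x,v)=T(\varphi_t(x,v))$ for every $t\in[0,\ell_+(x,v)]$ with $T(y):=\ell_+(y)-\ell_-(y)$ flow-invariant, Santal\'o's formula converts this into
\[\|I_0 f\|^2_{L^2(\pl_-SM,d\mu_\nu)}\leq \int_{SM} T(y)|\pi_0^* f(y)|^2 d\mu(y) = \int_M |f(x)|^2 g_T(x) dx,\]
where $g_T(x):=\int_{S_xM}T(x,v) dv=2\int_{S_xM}\ell_+(x,v) dv$ by $\ell_-(x,v)=-\ell_+(x,-v)$.

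The main expected obstacle is to verify $\|g_T\|_{L^\infty(M)}<\infty$, which goes beyond the integrated decay $V(t)=O(e^{Qt})$ recalled in the proof of Lemma~\ref{L^2} (that estimate alone only gives $g_T\in L^1(M)$). What is needed is the uniform fiberwise version ${\rm Vol}_{S_xM}\{v:\ell_+(x,v)>t\}\leq Ce^{-\alpha t}$ uniformly in $x\in M$; under hyperbolicity of $K$ this should follow from standard uniform stable/unstable manifold theory (cf.\ \cite{BoRu} and Proposition~2.4 of \cite{Gu2}), and integrating in $t$ then yields $g_T(x)\leq 2C/\alpha$. The case of $I_1^{\rm ev}$ is treated by an identical computation with $\pi_1^*$ in place of $\pi_0^*$, using the pointwise bound $|\pi_1^* f|^2\leq |f|_{g^*}^2$ in the Santal\'o step.
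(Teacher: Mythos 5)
Your decomposition into a near-glancing region and its complement, and your treatment of the near-glancing part via the second statement of Lemma \ref{L^2} applied to $\pi_0^*f$ and $\pi_1^*f$, coincide with the paper's proof. For the complementary region the paper does something different and much shorter: it invokes \cite[Propositions 5.7 and 5.9]{Gu2}, which assert that $I_0^*I_0$ and $I_1^*I_1$ extend to pseudo-differential operators of order $-1$ on $M_e^\circ$, and concludes by the standard $TT^*$ argument that $I_0:L^2(M)\to L^2(\pl_-SM,d\mu_\nu)$ and $I_1:L^2(M;T^*M)\to L^2(\pl_-SM,d\mu_\nu)$ are bounded; on $\{|\cjg v,\nu\cjd|\geq\eps\}$ one has $d\mu_{\pl SM}\leq \eps^{-1}d\mu_\nu$, exactly as in your reduction.

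Your replacement of that step by Cauchy--Schwarz plus Santal\'o has a genuine gap precisely where you flag it: the bound $\|g_T\|_{L^\infty(M)}<\infty$, i.e. $\sup_{x}\int_{S_xM}\ell_+(x,v)\,dS_x(v)<\infty$, does not follow from the integrated decay $V(t)=\mc{O}(e^{Qt})$ quoted in the proof of Lemma \ref{L^2}, and the claim that the uniform fiberwise estimate ${\rm Vol}_{S_xM}\{v:\ell_+(x,v)>t\}\leq Ce^{-\alpha t}$ is ``standard'' is too quick. Hyperbolicity of $K$ alone does not give it: one also needs uniform transversality of the vertical fibers $S_xM$ with the weak stable bundle $E_-\oplus\rr X$ along $\Gamma_-$ (this is where the no-conjugate-points hypothesis enters), together with a uniform escape estimate off a neighborhood of $\Gamma_-$; none of this is proved in the paper or contained in the results it cites, so as written this step is an unproved dynamical lemma rather than a routine verification. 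Note, however, that the quantity you need is exactly $I_0^*I_0\mathbf{1}$, since $g_T(x)=2\int_{S_xM}\ell_+(x,v)\,dS_x(v)=\Pi_0\mathbf{1}(x)$ (compare the Schur bound in Lemma \ref{normofPi}), and its boundedness does follow from \cite[Proposition 5.7]{Gu2}: a pseudo-differential operator of order $-1$ on a surface has locally integrable kernel and hence maps $L^\infty$ to $L^\infty$. So your route can be closed, but only by importing the very ingredient on which the paper's $TT^*$ argument rests; it does not yield an independent or more elementary proof unless you supply the uniform fiberwise escape estimate yourself.
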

\begin{proof}
The operators $I_0^*I_0$ and $I_1^*I_1$ extend to $M^\circ_e$ as smooth pseudo-differential operators of order $-1$ by \cite[Propositions 5.7 and 5.9]{Gu2}, thus one has by using the standard $TT^*$ argument that 
$I_0:L^2(M)\to L^2(\pl_-SM,d\mu_\nu)$ and $I_1:L^2(M;T^*M)\to L^2(\pl_-SM,d\mu_\nu)$ are bounded.
This implies that $(1-\indic_{\{|\cjg v,\nu\cjd|<\eps\}})I_0$ is bounded as a map $L^2(M)\to L^2(\pl_-SM)$
for all $\eps>0$ fixed, and the same for $(1-\indic_{\{|\cjg v,\nu\cjd|<\eps\}})I_1$.
But we also have $\indic_{\{|\cjg v,\nu\cjd|<\eps\}}I_0$ bounded as a map $L^2(M)\to L^2(\pl_-SM)$
if $\eps>0$ is small enough by Lemma \ref{L^2}, and the same for 
$\indic_{\{|\cjg v,\nu\cjd|<\eps\}}I_1$. The proof is complete.
\end{proof}
Notice however that when we will use the adjoints of $I_0$ or $I_1$, this will be always adjoint with respect to the space $L^2(\pl_-SM, d\mu_\nu)$. 

\section{The inversion formula for $I_0$.}

\subsection{The inversion formula}
Let us define the operator 
\begin{equation}\label{DefW}
W:=\frac{1}{2\pi}{\pi_0}_*X_\perp R_+\pi_0^* : C_c^\infty(M_e^\circ)\to C^{-\infty}(M^\circ_e).
\end{equation}
We start by proving the following
\begin{prop}\label{regW}
The operator $W$ defined in \eqref{DefW} has a smooth Schwartz kernel on $M_e^\circ\x M_e^\circ$. If the Gauss curvature $\kappa(x)$ is a negative constant, then $W=0$. 
\end{prop}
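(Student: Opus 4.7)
The plan is to prove the two assertions separately, using wavefront-set calculus for smoothness of the kernel and an explicit polar-coordinate computation for the constant-curvature vanishing.

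\textbf{Smoothness of the Schwartz kernel.} I would start from the observation that for $f \in C_c^\infty(M_e^\circ)$, $\pi_0^* f \in C_c^\infty(SM_e^\circ)$, so \eqref{mappingR+} yields $\WF(R_+\pi_0^* f) \subset E_-^*$, a bound that persists after application of the smooth vector field $X_\perp$. Next I would apply H\"ormander's pushforward theorem along the compact $S^1$-fibers of $\pi_0$: the output is smooth provided $E_-^*$ avoids the fiber conormal $\{\xi \in T^*SM_e : \xi(V) = 0\}$ over $\Gamma_-$, which is equivalent to $V \notin E_- \oplus \rr X$ everywhere on $\Gamma_-$. I would then verify this transversality directly from the no-conjugate-points assumption: writing $E_-(y) = \rr(X_\perp + r_-(y) V)$ with a finite stable slope $r_-$ (well-defined under the hypothesis), the equation $V = aX + b(X_\perp + r_-V)$ forces $b = 0$ and $br_- = 1$ simultaneously, a contradiction. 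This shows $W: C_c^\infty(M_e^\circ) \to C^\infty(M_e^\circ)$; the symmetric argument for the transpose (using $R_-$ and the extended unstable bundle $E_+^*$) then upgrades this to smoothness of the Schwartz kernel on $M_e^\circ \times M_e^\circ$.

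\textbf{Vanishing when $\kappa \equiv -\kappa_0$.} For the second assertion I would compute $W f(x)$ explicitly in geodesic polar coordinates $(r,\theta)$ centered at $x$, lifted to the universal cover $\widetilde{M_e}$ (isometrically embedded in the hyperbolic plane $\hh^2$ of curvature $-\kappa_0$) so the coordinates are globally defined, with metric $dr^2 + s(r)^2 d\theta^2$ and $s(r) = \sinh(\sqrt{\kappa_0}r)/\sqrt{\kappa_0}$. The crucial explicit input is that the perpendicular Jacobi field produced by $X_\perp$, with initial conditions $J(0)=v^\perp$ and $J'(0) = 0$, has the closed form $J(t) = \cosh(\sqrt{\kappa_0}t)\, e_\perp(t) = \sqrt{\kappa_0}\coth(\sqrt{\kappa_0}t)\,\pl_\theta$ in these coordinates (since $e_\perp(t) = s(t)^{-1}\pl_\theta$ along the radial geodesic). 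Setting $\tilde f = f \circ \pi$, this would give
\[
X_\perp R_+\pi_0^* f(x, v(\theta)) = \int_0^{\ell_+(\theta)} \sqrt{\kappa_0}\coth(\sqrt{\kappa_0}t)\,\pl_\theta \tilde f(t,\theta)\, dt
\]
for non-trapped directions, with the boundary term from differentiating the exit time vanishing because $\tilde f \equiv 0$ on $\pl\widetilde{M_e}$. After integrating over $\theta \in S^1$ and interchanging the order of integration, I would obtain
\[
W f(x) = \frac{\sqrt{\kappa_0}}{2\pi}\int_0^\infty \coth(\sqrt{\kappa_0} r)\left(\int_{\{\theta:\ell_+(\theta)>r\}} \pl_\theta \tilde f(r,\theta)\, d\theta\right) dr,
\]
and the inner integral vanishes identically: either the integration domain is the whole circle, so the integral is zero by $2\pi$-periodicity of $\tilde f$, or it decomposes into open arcs whose endpoints are the directions with $\ell_+(\theta) = r$, i.e., the radial geodesics just hitting $\pl\widetilde{M_e}$, where $\tilde f$ vanishes by compact support.

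\textbf{Main technical obstacle.} I expect the Fubini swap above to be the main obstacle when trapping is present, since $\ell_+(\theta) = \infty$ on a subset of directions and $\cosh(\sqrt{\kappa_0}t)$ grows exponentially. This I would address by noting that trapped directions form a measure-zero subset of $S_x M_e$ for a.e. $x$ (because $\mathrm{Vol}_\mu(\Gamma_-) = 0$), restricting the computation to the complement, and invoking the exponential bound $V(t) = O(e^{Qt})$ from \cite[Prop. 2.4]{Gu2} (which gives $\ell_+ \in L^1$) together with boundedness of $\tilde f$ to justify the interchange. This yields $Wf(x) = 0$ for a.e. $x$, and the smoothness of $Wf$ from the first part then extends the vanishing to every $x \in M_e^\circ$.
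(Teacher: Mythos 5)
There are two genuine gaps, one in each half of your argument.

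\textbf{Smoothness of the kernel.} Your wavefront argument shows (correctly, via the transversality $V\notin E_-\oplus \rr X$) that $W$ maps $C_c^\infty(M_e^\circ)$ into $C^\infty(M_e^\circ)$, but the final step --- ``the operator and its transpose both map $C_c^\infty\to C^\infty$, hence the Schwartz kernel is smooth'' --- is false: the identity operator has both properties and a singular kernel. Smoothness of the kernel is equivalent to mapping $\mc{E}'\to C^\infty$, and the bound $\WF(R_+f)\subset E_-^*$ from \eqref{mappingR+} only applies to \emph{smooth} compactly supported inputs; the Schwartz kernel of $R_+$ itself carries additional wavefront set on $N^*\Delta(SM_e^\circ\x SM_e^\circ)$ and on the flow-out set $\Omega_-$ (see \eqref{WFRpm}), coming from the $t=0$ endpoint of the integral. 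It is precisely the conormal-to-the-diagonal part that threatens a diagonal singularity of $W$, and ruling it out is where the paper spends most of its effort: it splits $R_+=\int_0^\eps e^{tX}dt+e^{\eps X}R_+$, handles the second piece by wavefront calculus, and for the first piece computes the kernel explicitly as $-\tfrac{1}{b}V(a/b)$ in exponential coordinates and checks via the Taylor expansion of the Jacobi solutions of \eqref{champsjacobi} that it extends smoothly (in fact vanishes to first order) across $x=y$. Your proposal contains no analysis of the diagonal at all.

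\textbf{Vanishing in constant curvature.} Your route (polar coordinates on the universal cover, swap the $t$- and $\theta$-integrals, then integrate $\pl_\theta\til{f}$ over arcs whose endpoints lie on $\pl \til{M}_e$) is genuinely different from the paper's, and the pointwise computation $Z(t)=\sqrt{\kappa_0}\coth(\sqrt{\kappa_0}t)\pl_\theta$ is correct. But the Fubini step is not salvageable by the tools you invoke. The integrand is $\sqrt{\kappa_0}\coth(\sqrt{\kappa_0}t)\,\pl_\theta\til f(t,\theta)=\cosh(\sqrt{\kappa_0}t)\,d\til f(e_\perp(t))$, whose absolute value grows like $e^{\sqrt{\kappa_0}t}$ each time the (lifted) geodesic re-enters $\supp d\til f$ --- which a trapped or long geodesic does at arbitrarily large times, since $\til f$ is the $\Gamma$-periodic lift, not compactly supported in $\til M_e$. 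Absolute convergence of the double integral would require $e^{\sqrt{\kappa_0}\ell_+}\in L^1(S_xM)$, whereas the escape-rate bound only gives ${\rm Vol}\{\ell_+>t\}=\mc{O}(e^{-(1-\delta_\Gamma)\sqrt{\kappa_0}t})$; the product is never integrable once there is trapping. So ``$\ell_+\in L^1$ plus boundedness of $\til f$'' does not justify the interchange, and the cancellation you need (the $\theta$-integral at fixed $t$) is exactly what is inaccessible without it. The paper avoids this entirely: it damps by $e^{-\la t}$ to define $W(\la)$ as in \eqref{Wla}, observes that $V(a/b)\equiv 0$ when $\kappa$ is constant (so the integrand vanishes pointwise, before any integration), and removes the damping by analytic continuation in $\la$ using the meromorphic extension of $R_+(\la)$. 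Some such regularization (or a direct argument on the smooth kernel itself) is needed to make your computation rigorous.
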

\begin{proof} We decompose the operator $R_+$ as $R_+=R_+^1+R_+^2$ with
\[ R^1_+f: = \int_{0}^\eps e^{tX}dt, \quad R_+^2:= e^{\eps X}R_+\]
where $\eps>0$ is smaller than the radius of injectivity of the metric. By \cite{DyGu},  the Schwartz kernel 
of $R_+ $
is  a distribution on $SM^\circ_e\x SM^\circ_e$ with wavefront set 
\begin{equation}\label{WFRpm} 
{\rm WF}( R_+)\subset N^*\Delta(SM^\circ_e\x SM^\circ_e)\cup \Omega_-\cup (E_-^*\x E_+^*)\end{equation} 
where $E_\pm^*$ are defined in \eqref{E_-*},  $N^*\Delta(SM^\circ_e\x SM^\circ_e)$ is the conormal bundle to the diagonal 
$\Delta(SM^\circ_e\x SM^\circ_e)$ of $SM^\circ_e\x SM^\circ_e$ and 
\[\Omega_-:= \{(\varphi_{-t}(y),(d\varphi_{-t}(y)^{-1})^T\xi,y,-\xi)\in T^*(SM^\circ_e\x SM^\circ_e);  \,\, t\geq 0,\,\, \xi(X(y))=0\}.\] 
The wave-front set of the Schwartz kernel of $e^{\eps X}$ is 
\[{\rm WF}(e^{\eps X})\subset \{(\varphi_{-\eps}(y),\eta,y,-d\varphi_{-\eps}(y)^T\eta); \,\, y\in SM^\circ_e, \eta\in 
T^*_{\varphi_{-\eps}(y)}(SM^\circ_e)\setminus\{0\}\}\]
thus by the composition rule of wave-front sets given in \cite[Theorem 8.2.14]{Ho},   
\[\begin{split}
{\rm WF}(e^{\eps X}R_+)\subset & \{(\varphi_{-t}(y),(d\varphi_{-t}(y)^{-1})^{T}\eta,y,-\eta);  \,\, t\geq \eps,\,\, \eta(X(y))=0\}\cup (E_-^*\x E_+^*)\\
& \cup \{(\varphi_{-\eps}(y),\eta ,y,-d\varphi_{-\eps}(y)^T\eta);\,\, (y,\eta)\in T^*(SM^\circ_e)\setminus\{0\}\}.
\end{split}\]
Since a differential operator does not increase wavefront sets, we deduce that  
\begin{equation}\label{WFSet}
\begin{split} 
{\rm WF}(X_\perp e^{\eps X}R_+)\subset  & \{(\varphi_{-t}(y),(d\varphi_{-t}(y)^{-1})^{T}\eta,y,-\eta);  \,\, t\geq \eps,\,\, \eta(X(y))=0\}\cup (E_-^*\x E_+^*)\\
& \cup \{(\varphi_{-\eps}(y),\eta ,y,-d\varphi_{-\eps}(y)^T\eta);\,\, (y,\eta)\in T^*(SM^\circ_e)\setminus\{0\}\}.
\end{split}\end{equation}
The Schwartz kernel of ${\pi_0}_*X_\perp e^{\eps X}R_+\pi_0^*$ is given by 
$(\pi_0\otimes \pi_0)_*K$ if $K$ is the kernel of $X_\perp e^{\eps X}R_+$. Using \eqref{WFSet},� the same exact arguments as in Proposition 5.7 of \cite{Gu2} show that the push-forward $(\pi_0\otimes \pi_0)_*K$ 
is smooth outside the submanifold $\{(x,x')\in M_e^\circ\x M_e^\circ; d_g(x,x')=\eps\}$
if $g$ has no conjugate points. We are reduced to analyzing $W_1:={\pi_0}_*X_\perp R_+^1\pi_0^*$ and this 
is very similar to the case of a simple metric studied in \cite[Prop 5.1]{PeUh}. We denote by $Y(x,v,t)$ 
the Jacobi field along the geodesic $\cup_{t\geq 0}\varphi_t(x,v)$ satisfying $Y(x,v,0)=0$ 
and $\nabla_tY(x,v,0)=Jv$ where $J$ is the rotation of $+\pi/2$ in the fibers of $SM$, and let $Z(x,v,t)$ be the Jacobi field satisfying $Z(x,v,0)=Jv$ and $\nabla_tZ(x,v,0)=0$. They satisfy 
\[ d\pi_0.d\varphi_t(x,v).(X_\perp,0)=Z(x,v,t), \quad d\pi_0.d\varphi_t(x,v).(0,V)=Y(x,v,t)\]
where we used the splitting $TSM=\mc{H}\oplus \mc{V}$. The vector field $Z,Y$ satisfy
\[
Z(x,v,t)=a(x,v,t)Jv(t), \quad Y(x,v,t)=b(x,v,t)Jv(t), \quad \textrm{ with }\]  
\begin{equation}\begin{gathered}
\label{champsjacobi}
\pl_t^2 a(x,v,t)+\kappa(x(t))a(x,v,t)=0,\quad \pl_t^2 b(x,v,t)+\kappa(x(t))b(x,v,t)=0,\\
a(x,v,0)=1, \,\, \pl_ta(x,v,0)=0, \quad b(x,v,t)=0, \,\, \pl_tb(x,v,0)=1
\end{gathered}\end{equation} 
if $\varphi_t(x,v)=(x(t),v(t))$ and $\kappa$ is the Gaussian curvature. We can then write 
\[ \begin{split}
W_1f(x)= & \int_{S_xM}\int_{0}^\eps X_\perp (f(\pi_0(\varphi_t(x,v))))dtdS_{x}(v)\\
&= \int_{S_xM}\int_{0}^\eps df(x(t)).Z(x,v,t)dtdS_{x}(v)\\
&=\int_{S_xM}\int_{0}^\eps \frac{a(x,v,t)}{b(x,v,t)}df(x(t)).Y(x,v,t)dtdS_{x}(v)\\
W_1f(x)&=- \int_{S_xM}\int_{0}^\eps V\Big(\frac{a(x,v,t)}{b(x,v,t)}\Big)f(x(t)) dtdS_{x}(v)
\end{split}\]
and using the change of variable $\exp_x: (t,v)\mapsto y:=\pi_0(\varphi_t(x,v))=x(t)$ satisfying  
$(\exp_x)_*(b(x,v,t)dtdS_x(v))={\rm dvol}_g(y)$, we deduce that the Schwartz kernel of $W_1$ is given by
\begin{equation}\label{kernelW1} 
W_1(x,y)= -\indic_{[0,\eps]}(d_g(x,y)) \Big[\frac{1}{b(x,v,t)}V\Big(\frac{a(x,v,t)}{b(x,v,t)}\Big)\Big]\Big|_{tv=\exp_{x}^{-1}(y)} .
\end{equation}
Clearly this kernel is smooth outside $\{(x,y); d_g(x,y)=\eps \textrm{ or }x=y\}$. We now study the singularity at $x=y$, which is equivalent to $t=0$ in the $(x,t,v)$ coordinates. Using 
\[\pl_t^3 a(x,v,t)+d\kappa_{x(t)}.v(t) a(x,v,t)+\kappa(x(t))\pl_ta(x,v,t)=0\]
and the same for $b$, we make an expansion of $a,b$ at $t=0$
\[ \begin{gathered}
a(x,v,t)=1-\kappa(x)\frac{t^2}{2}-d\kappa_x(v)\frac{t^3}{6}+\mc{O}(t^4), \\
b(x,v,t)=t-\frac{t^3}{6}\kappa(x)+\mc{O}(t^4).
\end{gathered}\]
Thus we get near $t=0$
\begin{equation}\label{kernelint} 
F(x,v,t):=\frac{1}{b(x,v,t)}V\Big(\frac{a(x,v,t)}{b(x,v,t)}\Big)= \mc{O}(t).
\end{equation}
which shows that the kernel extends continuously to the diagonal. Let us show 
that $W_1$ extends smoothly to the diagonal $x=y$. For this, note that 
for each $x_0\in M_e^\circ$ the function $F(x,v,t)$ is smooth in the variable 
$(x,v,t)$ in $U_{x_0}\x [0,\eps]$ where $U_{x_0}$ is a neighborhood of  $S_{x_0}M$ in $SM$.
Then it suffices to show that for each $x$ the Taylor expansion at $t=0$ of $F(x,v,t)$ 
to any order $N\in \nn$ satisfies
\begin{equation}\label{expofF} 
F(x,v,t)=\sum_{k=0}^N F_k(x)(\otimes^k v)t^k+\mc{O}(t^{N+1})
\end{equation}
where $F_k$ are smooth symmetric tensors of order $k$. For this purpose, we have 
$\kappa(x(t))\sim \sum_{k=0}^\infty D^k\kappa(x)(\otimes^k v)\frac{t^k}{k!}$ as $t\to 0$ where $D=\mc{S}\circ \nabla$ is the symmetrized covariant derivative. Then, writing $a(x,v,t)\sim \sum_{k=0}^\infty a_k(x;v)t^k$ as $t\to 0$,
we get for each $k\geq 0$
\[ a_{k+2}(x;v)=-\sum_{i+j=k}\frac{D^i\kappa(x)(\otimes^iv)a_j(x;v)}{i!(k+2)(k+1)}.\]
From this we deduce by a direct induction that $a_k(x;v)=a_k(x)(\otimes^kv)$ is the restriction 
of an element $a_k\in C^\infty(M;\otimes_S^kT^*M)$ 
to $SM$. The same argument applies to $b(x,v,t)/t$. Therefore using that 
$V(c_k(x)(\otimes^kv))=kc_k(x)(Jv,v,\dots,v)$ if $c_k\in C^\infty(M;\otimes_S^kT^*M)$, 
we see that \eqref{expofF} has the desired expansion, showing that $W_1$ extends smoothly to the diagonal. Since $\eps>0$ was arbitrary, we have that $W_1$ has smooth kernel. 

Let us show that $W=0$ if $\kappa<0$ is constant. For ${\rm Re}(\la)>C$ with $C>0$ large, 
we define the operator $W(\la)$ by 
\[W(\la):=\frac{1}{2\pi}{\pi_0}_*X_\perp R_+(\la)\pi_0^*, \quad R_+(\la)f=\int_{0}^\infty e^{-\la t}f\circ \varphi_t \, dt.\]
Due to the exponential damping in $t$, it is easy to see that $W(\la): C_c^\infty(M^\circ_e)\to C^0(M_e^\circ)$ is bounded if $C>0$ is large enough.  In \cite{Gu2} it is shown that there exists $\delta>0$ such that 
$R_+(\la): C_c^\infty(M^\circ_e)\to H^{-s}(SM_e^\circ)$ admits an analytic extension to $\{{\rm Re}(\la)>-\delta\}$ 
if $s>0$ is large enough depending on $\delta$. Thus $W(\la)$ also admits an analytic extension to the same half-plane as a map $C_c^\infty(M^\circ_e)\to H^{-s-1}(SM_e^\circ)$, and we will show it vanishes if $\kappa$ is constant. Since the flow is assumed to have no conjugate points and the trapped set is hyperbolic, there is a uniform lower bound
$|b(x,v,t)|>\eps>0$ for some $\eps>0$, and by using Gronwall lemma there exists $C>0$ such that 
\[|a(x,v,t)|+|b(x,v,t)|\leq Ce^{C|t|} , \quad |V(a(x,v,t))|+|V(b(x,v,t))|\leq Ce^{C|t|}
\]
for all $x,v,t$. 
Using these estimates and reasoning like above,
we have the converging expression for each $f\in C_c^\infty(M_e^\circ)$ 
\begin{equation}\label{Wla}
W(\la)f(x)=- \frac{1}{2\pi}\int_{S_xM}\int_{0}^\infty e^{-\la t} V\Big(\frac{a(x,v,t)}{b(x,v,t)}\Big)f(x(t)) dtdS_{x}(v)
\end{equation}
as a continuous map of $x$ if ${\rm Re}(\la)>2C$. 
Now, by \eqref{champsjacobi}, the functions $a(x,v,t)$ and $b(x,v,t)$ are constant in $(x,v)$ if the curvature $\kappa$ is constant, thus $W(\la)f=0$ for all $f\in C_c^\infty(SM_e^\circ)$. By analyticity in $\la$, it implies that $W=W(0)$ vanishes when $\kappa$ is constant. 
\end{proof}

Using that $X_\perp^*=-X_\perp$ on $C_c^\infty(SM^\circ)$ 
and $(R_+^*u)(x,v)=-(R_+u)(x,-v)$ if $u\in C_c^\infty(SM^\circ)$ 
is odd with respect to $A(x,v)=(x,-v)$, we obtain that the $L^2$-adjoint of $W$ is given by
\[W^*=\frac{1}{2\pi}{\pi_0}_*R_+X_\perp\pi_0^*.\]
Now, we can show that Pestov-Uhlmann inversion formula \cite{PeUh} works also under our assumptions.
\begin{theo}\label{inversion}
 If $(M,g)$ has strictly convex boundary and assumptions \eqref{assumptions} hold, then the following identity holds for each $f\in L^2(M)$ and $h\in H_0^1(M)$
\begin{equation}\label{formulainv0}
f+W^2f= -\frac{1}{4\pi} *dI_1^*\mc{S}_g^{-1}(HI_0^{\rm od}f)|_{\pl_+SM},
\end{equation} 
\begin{equation}\label{formulainv1}
h+(W^*)^2h= -\frac{1}{4\pi}I_0^*\mc{S}^{-1}_g(HI^{\rm ev}_1(*dh))|_{\pl_+SM}
\end{equation}
where $W$ is the smoothing operator defined in Proposition \ref{regW}, $W^*$ is its $L^2$-adjoint, 
$*$ denotes the Hodge star operator on $1-$forms and $\mc{S}_g$ is defined by \eqref{unitary}.
\end{theo}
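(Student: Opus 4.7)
The plan is to prove each identity by applying the Pestov--Uhlmann commutator identity \eqref{commutator} twice in succession, combined with the transport boundary value formula $q = R_+F + I^*\mc{S}_g^{-1}\omega$ (the unique $L^1$ solution of $Xq = -F$, $q|_{\pl_+SM} = \omega$) recalled at the end of Section 2.3.

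For \eqref{formulainv0}, I would set $u := R_+\pi_0^*f$, so that $Xu = -\pi_0^*f$, $u|_{\pl_+SM} = 0$, and $u|_{\pl_-SM} = I_0f$. Decomposing $u = u_{\rm ev} + u_{\rm od}$ under the involution $A(x,v) = (x,-v)$, parity yields $Xu_{\rm od} = -\pi_0^*f$, and case analysis on $\pl_\pm SM$ gives $u_{\rm od}|_{\pl SM} = \tfrac12 I_0^{\rm od}f$. A first application of the commutator to $u_{\rm od}$ uses $(u_{\rm od})_0 = 0$ (odd fiber integral), $H\pi_0^*f = 0$ (since $H$ kills $\Omega_0$), and $\pi_{0*}(X_\perp u_{\rm od}) = \pi_{0*}(X_\perp u) = 2\pi Wf$ (by parity and the definition \eqref{DefW}); this yields that $p := Hu_{\rm od}$ solves $Xp = -\pi_0^*Wf$. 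A second application of the commutator, now to the odd distribution $p$, uses $H^2 = -\operatorname{Id}$ off $\Omega_0$ to obtain $XHp = -Xu_{\rm od} = \pi_0^*f$ and $HXp = 0$, hence $\pi_{0*}(X_\perp p) = -2\pi f$. The boundary value formula applied to $p$, with trace $p|_{\pl_+SM} = \tfrac12(HI_0^{\rm od}f)|_{\pl_+SM}$ (well-defined by ${\rm WF}(R_+\pi_0^*f) \subset E_-^*$ and \eqref{hilbertrest}), gives $p = R_+\pi_0^*Wf + \tfrac12 I^*\mc{S}_g^{-1}(HI_0^{\rm od}f|_{\pl_+SM})$; applying $\pi_{0*}X_\perp$ throughout produces \eqref{formulainv0}, provided one recognizes $\pi_{0*}X_\perp I^* = *d\,I_1^*$. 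This last operator identity follows by dualizing the pointwise relation $X_\perp\pi_0^*h = -\pi_1^*(*dh)$ (the horizontal derivative realized as the Hodge-star gradient), using $X_\perp^* = -X_\perp$ and $(*d)^* = *d$.

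For \eqref{formulainv1} I would run a parallel argument with odd and even exchanged. Take $u := R_+\pi_1^*(*dh)$, so that $Xu_{\rm ev} = -\pi_1^*(*dh) = X_\perp\pi_0^*h$ and $u_{\rm ev}|_{\pl SM} = \tfrac12 I_1^{\rm ev}(*dh)$. The crucial simplification is that $h \in H^1_0$ forces $\pi_0^*h|_{\pl SM} = 0$. A single application of the commutator to $u_{\rm ev}$, together with the identity $H\pi_1^*(*dh) = \pi_1^*(**dh) = -\pi_1^*(dh) = -X\pi_0^*h$ (coming from $H\pi_1^*\alpha = \pi_1^*(*\alpha)$ on $1$-forms), leads to a transport equation for $w := \pi_0^*h - Hu_{\rm ev}$ with source proportional to $X_\perp\pi_0^*W^*h$; combining the boundary value formula (whose trace reduces to $-\tfrac12(HI_1^{\rm ev}(*dh))|_{\pl_+SM}$ thanks to the vanishing of $\pi_0^*h$ on $\pl SM$) with $\pi_{0*}H = 0$ and $\pi_{0*}R_+X_\perp\pi_0^* = 2\pi W^*$ directly yields \eqref{formulainv1}.

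The main technical obstacle is justifying the distributional manipulations --- especially the traces of $p$ and $w$ on $\pl SM$ and the action of $H$ on them --- when $f \in L^2(M)$ and $h \in H^1_0(M)$, which requires the wavefront bounds from \eqref{mappingR+} together with the pullback continuity \cite[Th. 8.2.4]{Ho}. Alternatively, one can first prove both identities for $f, h \in C_c^\infty(M^\circ)$ and extend by density using the $L^2$-boundedness of $I_0^{\rm od}, I_1^{\rm ev}$ from Lemma \ref{I_0L^2} and the smoothing property of $W$ from Proposition \ref{regW}. Tracking signs through the conventions $X_\perp=[X,V]$, Hodge star, and the fiber Hilbert transform requires care but is routine.
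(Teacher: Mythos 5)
Your argument is essentially the paper's own proof: for \eqref{formulainv0} the paper likewise applies the Pestov--Uhlmann bracket twice (in the equivalent form \eqref{oddpart}) to $R_+\pi_0^*f$, obtaining first $X(H_{\rm od}R_+\pi_0^*f)=-\pi_0^*Wf$ and then $f=-(X_\perp H_{\rm od}R_+\pi_0^*f)_0$, and feeds the transport equation into the boundary-value representation $u=R_+F+I^*\mc{S}_g^{-1}\omega$ before applying $\tfrac{1}{2\pi}{\pi_0}_*X_\perp$; for \eqref{formulainv1} it performs exactly the single even-part application you describe (with $q=R_+X_\perp\pi_0^*h$ in place of your $R_+\pi_1^*(*dh)$, the same object up to sign), and the extension to $f\in L^2$, $h\in H^1_0$ is by the same density argument. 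The one point to fix is a sign: the two auxiliary identities you invoke, $X_\perp\pi_0^*h=-\pi_1^*(*dh)$ and $H\pi_1^*\alpha=\pi_1^*(*\alpha)$, cannot both hold --- in isothermal coordinates with $V=\pl_\theta$ and $*dx_1=dx_2$ one finds $X_\perp\pi_0^*h=+\pi_1^*(*dh)$ \emph{and} $H\pi_1^*\alpha=\pi_1^*(*\alpha)$, and reversing the orientation flips both signs simultaneously. This is harmless for \eqref{formulainv0}, where only the dual identity ${\pi_0}_*X_\perp=*d\,{\pi_1}_*$ enters and your two sign choices compensate (note $(*d)^*=-*d$ with the consistent convention, again compensating), but in your derivation of \eqref{formulainv1} the relative sign matters: with your stated conventions $u_0=-\pi_0^*W^*h$ while $HXu_{\rm ev}=X\pi_0^*h$, and the computation then yields $h-(W^*)^2h$ on the left-hand side rather than $h+(W^*)^2h$; with the consistent signs $u_0=+\pi_0^*W^*h$ and the stated formula comes out. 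Everything else --- the traces $\tfrac12 I_0^{\rm od}f$ and $\tfrac12 I_1^{\rm ev}(*dh)$ via \eqref{hilbertrest}, the role of $\pi_0^*h|_{\pl SM}=0$ for $h\in H_0^1$, and the identification ${\pi_1}_*I^*=I_1^*$ --- matches the paper.
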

\begin{proof} We follow  the proof of \cite[Theorem 5.1]{PeUh}. First using \eqref{oddpart} we have as distribution on $SM^\circ$
\begin{equation}\label{XuF} 
0=H\pi_0^*f=H_{\rm ev}XR_+\pi_0^*f =XH_{\rm od}R_+\pi_0^*f+ \pi_0^*Wf
\end{equation}
for any $f\in C_c^\infty (M^\circ)$. Applying $H$ to this identity, we get by \eqref{commutator}
\[ 0=XH^2(R_+\pi_0^*f)_{\rm od}+(X_\perp H_{\rm od}R_+\pi_0^*f)_0=-X(R_+\pi_0^*f)_{\rm od}+(X_\perp H_{\rm od}R_+\pi_0^*f)_0\]
but since $X(R_+\pi_0^*f)_{\rm od}=XR_+\pi_0^*f=-f$ as 
$X$ maps odd functions to even functions (and conversely), we obtain
\begin{equation}\label{fXperp}
f=-(X_\perp H_{\rm od}R_+\pi_0^*f)_0.
\end{equation}
The unique $L^1$-solution of $Xu=-F$, $u|_{\pl_+SM}=\omega$ with $F\in C^\infty(SM)$ and $\omega\in L^2(\pl_+SM, d\mu_\nu)$ is given by $u=R_+F+I^*\mc{S}_g^{-1}\omega$. But $\mc{S}_g^{-1}(HI_0^{\rm od}f)|_{\pl_+SM}\in L^2(\pl_-SM,d\mu_\nu)$ by Lemma \ref{I_0L^2} and \eqref{unitary}, and $Wf\in C^\infty(M)$ by Proposition \ref{regW}, we apply this to \eqref{XuF}� and get 
\[ H_{\rm od}R_+\pi_0^*f=R_+\pi_0^*Wf+\demi I^*\mc{S}_g^{-1}(HI_0^{\rm od}f)|_{\pl_+SM}.\]
Notice that we have used that $(x,v)\mapsto R_+f(x,-v)$ vanishes on $\pl_+SM$ since $R_+f=0$ on $\pl_-SM$.
Applying $\frac{1}{2\pi}{\pi_0}_*X_\perp$ we get 
\[ (X_\perp H_{\rm od}R_+\pi_0^*f)_0=W^2f+\tfrac{1}{4\pi} {\pi_0}_*X_\perp I^*\mc{S}_g^{-1}(HI_0^{\rm od}f)|_{\pl_+SM} \]
which gives \eqref{formulainv0} by using \eqref{fXperp}, the identity ${\pi_1}_*I^*=I_1^*$
and  ${\pi_0}_*X_\perp=*d{\pi_1}_*$. The extension to $f\in L^p(M)$ for $p\geq 2$ is obtained by 
density and boundedness of each of the operators in the formula on the correct spaces. Notice that a priori
$*dI_1^*\mc{S}_g^{-1}HI_0^{\rm od}f\in H^{-1}(M^\circ)$ if $f\in L^2(M)$ but \eqref{formulainv0} actually shows it is in $L^2(M)$. 

Next we prove the inversion formula for co-exact $1$-forms. Let $h\in C_c^\infty(M^\circ)$, then since 
$X_\perp \pi_0^*h$ is odd, we get
\[ X(R_+X_\perp \pi_0^*h)_{\rm ev} = -X_\perp \pi_0^*h \]
and applying $H$ with \eqref{oddpart}, this gives
\[ XH(R_+X_\perp \pi_0^*h)_{\rm ev}=-HX_\perp \pi_0^*h-X_\perp \pi_0^*W^*h= X\pi_0^*h- X_\perp \pi_0^*W^*h
\]   
by using that $-H=V$ on $\Omega_1\oplus \Omega_{-1}$ and $VX_\perp=X$ on $\Omega_0$. Here 
$H(R_+X_\perp \pi_0^*h)_{\rm ev}\in L^1(SM)\cap C^0(SM\setminus \Gamma_-)$ and its 
restriction to $\pl SM$ is $\demi HI^{\rm ev}_1(*dh)\in L^2(\pl SM)$ by using Lemma \ref{I_0I_1}, thus 
\[ \begin{split}
H(R_+X_\perp \pi_0^*h)_{\rm ev}= & -R_+X\pi_0^*h+R_+X_\perp \pi_0^*W^*h+\demi I^*\mc{S}_{g}^{-1}(HI^{\rm ev}_1(*dh))|_{\pl_+SM}\\
 = &\pi_0^*h+R_+X_\perp \pi_0^*W^*h+\demi I^*\mc{S}^{-1}_g(HI^{\rm ev}_1(*dh))|_{\pl_+SM}
 \end{split}\]
 We apply $\frac{1}{2\pi}{\pi_0}_*$ and obtain 
 \[ h+(W^*)^2h= -\frac{1}{4\pi}I_0^*\mc{S}^{-1}_g(HI^{\rm ev}_1(*dh))|_{\pl_+SM}.\]
 By density the same identity holds for $h\in H_0^1(M)$.
\end{proof}
We notice that a divergence-free $1$-form $u$ with Sobolev regularity $H^2(M;T^*M)$ can be decomposed under the form $u=*dh +w$ where 
$h\in H_0^1(M)\cap H^3(M)$ and $w\in H^2(M;T^*M)$ satisfies $dw=0$ and $d*w=0$. The formula \eqref{formulainv1}
allows to recover a divergence-free $1$ form $*dh$ from its X-ray transform, and the X-ray transform of $I_1(w)(x,v)$ of $w$ at $(x,v)\in \pl_-SM\setminus \Gamma_-$ 
is given purely in terms of  the integral of $w$ on a curve 
in $\pl M$ with endpoints $x$ and $\pi_0(S_g(x,v))$ and the homology class of 
the geodesic $\pi_0(\varphi_t(x,v))$ in $M$, which is a known data.

\subsection{Estimates of the norm of the error term in negative curvature}
We now show that in pinched negative curvature the operator $W$ has small norm on $L^p(M)$, 
allowing to invert ${\rm Id}-W^2$ by Neumann series. 

First, we have by \cite[Section 5]{Gu2} that $\demi I_0^*I_0={\pi_0}_*R_+\pi_0^*$ and since 
\begin{equation}\label{R+L1}
R_+f(x,v)=\int_{0}^{\ell_g(x,v)}\pi_0^*f(\varphi_t(x,v))dt  \in L^1(SM)
\end{equation}
by \cite[Proposition 4.2]{Gu2} if $f\in L^p(M)$ with $p>1$, we see that the integral
\[I_0^*I_0f(x)=2 \int_{S_xM}\int_{0}^{\ell_g(x,v)}\pi_0^*f(\varphi_t(x,v))dtdS_x(v)\]
is defined for almost every $x\in M$ if $f\in L^p(M)$ with $p>1$, by Fubini theorem. 
In \cite[Proposition 5.7]{Gu2}, it is shown that 
$I_0^*I_0$ is the restriction to $M$ of an elliptic pseudo-differential operator of order $-1$ on the extended manifold $M^\circ_e$, thus it maps $L^p(M)$ to $L^p(M)$ for each $p\in [1,\infty]$.

\begin{prop}\label{normW}
Let $(M,g)$ be a surface with strictly convex boundary and assume that the Gauss curvature $\kappa\in C^\infty(M)$ of $g$ is negative, with $\kappa_0=\min_{x\in M}|\kappa(x)|$. For each $p\in [1,\infty]$, 
the $L^p\to L^p$ norm of $W$ is bounded by 
\[||W||_{L^p\to L^p}\leq \frac{|d\kappa|_{L^\infty}}{3\kappa_0}||I_0^*I_0||_{L^p\to L^p}.\]
\end{prop}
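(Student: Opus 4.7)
The plan is to derive an explicit integral representation of $Wf$ whose integrand can be bounded pointwise in terms of $d\kappa$, and then dominate by the kernel of $I_0^*I_0$.

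Refining the analysis in the proof of Proposition \ref{regW} but without splitting $R_+ = R_+^1 + R_+^2$: for $f \in C_c^\infty(M^\circ)$, commuting $X_\perp$ under the integral defining $R_+$, using the identity $X_\perp(f \circ \pi_0 \circ \varphi_t) = (a/b)\, V(f \circ \pi_0 \circ \varphi_t)$ (which follows from $Z = a Jv(t)$ and $Y = b Jv(t)$), and integrating by parts in the closed fiber $S_xM$ (the boundary term at $t = \ell_g(x,v)$ vanishes since $\supp(f) \subset M^\circ$) gives
\[ Wf(x) = -\frac{1}{2\pi} \int_{S_xM}\int_0^{\ell_g(x,v)} V\!\left(\frac{a(x,v,t)}{b(x,v,t)}\right) f(\pi_0\varphi_t(x,v))\, dt\, dS_x(v). \]
I next compute $V(a/b)$ by Jacobi analysis: differentiating the Jacobi equations $\partial_t^2 a + \kappa(x(t)) a = 0$, $\partial_t^2 b + \kappa(x(t)) b = 0$ in the vertical direction and using $V(\kappa(x(t))) = b(x,v,t)\, d\kappa_{x(t)}(Jv(t))$ (from $\partial_\theta x(t)|_{\theta=0} = Y = b Jv(t)$) produces inhomogeneous Jacobi equations for $Va$ and $Vb$ with vanishing initial data. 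Solving by variation of parameters using the Green kernel $\phi(s,t) := a(s)b(t) - b(s)a(t)$ (Wronskian $ab' - a'b \equiv 1$), then simplifying $V(a/b) = (b\, Va - a\, Vb)/b^2$ via $b(t)a(s) - a(t)b(s) = \phi(s,t)$, yields the clean identity
\[ V\!\left(\frac{a}{b}\right)(x,v,t) = -\int_0^t \frac{b(s)\, \phi(s,t)^2}{b(t)^2}\, d\kappa_{x(s)}(Jv(s))\, ds, \]
which manifestly vanishes in constant curvature, consistent with Proposition \ref{regW}.

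The main analytic step, and the expected main obstacle, is the pointwise bound
\[ \int_0^t \frac{b(s)\,\phi(s,t)^2}{b(t)^2}\, ds \leq \frac{1}{3\kappa_0} \qquad \text{for all } t \geq 0 \]
under the hypothesis $\kappa \leq -\kappa_0 < 0$. In constant negative curvature $-\kappa_0$, one has $b(s) = \sinh(\sqrt{\kappa_0}\, s)/\sqrt{\kappa_0}$ and $\phi(s,t)/b(t) = \sinh(\sqrt{\kappa_0}(t-s))/\sinh(\sqrt{\kappa_0}\, t)$, and an explicit calculation shows this integral is bounded by $1/(3\kappa_0)$, with equality in the limit $t \to \infty$. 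Extending to the variable pinched case requires Sturm-type comparison: one uses the lower bound $b \geq \sinh(\sqrt{\kappa_0}\,\cdot)/\sqrt{\kappa_0}$ and controls $\phi(s,t)/b(t) = a(s) - q(t) b(s)$ via Riccati analysis of the stable-direction quantity $q(t) = a(t)/b(t)$ under the pinching hypothesis. Combined with $|d\kappa_{x(s)}(Jv(s))| \leq |d\kappa|_{L^\infty}$, this gives the pointwise estimate $|V(a/b)(x,v,t)| \leq |d\kappa|_{L^\infty}/(3\kappa_0)$.

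Combining the $W$ formula with $\tfrac{1}{2}I_0^*I_0(|f|)(x) = \int_{S_xM} \int_0^{\ell_g(x,v)} |f(\pi_0\varphi_t(x,v))|\, dt\, dS_x(v)$ and the above pointwise estimate yields
\[ |Wf(x)| \leq \frac{|d\kappa|_{L^\infty}}{12\pi\kappa_0}\, I_0^*I_0(|f|)(x) \leq \frac{|d\kappa|_{L^\infty}}{3\kappa_0}\, I_0^*I_0(|f|)(x), \]
and since the kernel of $I_0^*I_0$ is non-negative, $\|I_0^*I_0(|f|)\|_{L^p} \leq \|I_0^*I_0\|_{L^p\to L^p}\|f\|_{L^p}$, so the stated $L^p \to L^p$ bound for $W$ on $C_c^\infty(M^\circ)$ follows and extends to $L^p(M)$ by density (with $p = \infty$ handled directly by the same pointwise argument).
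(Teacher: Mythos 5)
Your overall architecture coincides with the paper's: the integral representation of $Wf$ with kernel $-\tfrac{1}{2\pi}V(a/b)$, the Duhamel identity
\[
V\Big(\frac{a}{b}\Big)(t)=-\frac{1}{b(t)^2}\int_0^t\big(a(s)b(t)-a(t)b(s)\big)^2\, b(s)\,\kappa_\perp(s)\,ds,
\]
and the final domination of $|Wf|$ by a uniform constant times $I_0^*I_0(|f|)$ are exactly the steps the paper takes. The gap is in the one step you yourself flag as the main obstacle: the uniform bound on $\int_0^t b(s)\big(a(s)b(t)-a(t)b(s)\big)^2 b(t)^{-2}\,ds$. You assert the sharp constant $1/(3\kappa_0)$ (the $t\to\infty$ value in constant curvature) and propose to reach it by Sturm comparison using $b\geq\sinh(\sqrt{\kappa_0}\,\cdot)/\sqrt{\kappa_0}$ together with a Riccati analysis of $q=a/b$. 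This does not work as stated: rewriting the integrand as $b(s)^3\big(\int_s^t b(u)^{-2}du\big)^2$, it involves a positive power of $b$ multiplied by negative powers of $b$ inside the inner integral, so it is not monotone in $b$, and replacing $b$ by its constant-curvature minorant gives neither an upper nor a lower bound. Some two-sided control is genuinely needed and is not supplied; moreover it is not established (in the paper or by you) that the sharp constant $1/(3\kappa_0)$ survives in variable curvature --- the paper only proves $2/(3\kappa_0)$.

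The paper's actual device for this step is the missing ingredient: it splits $b(u)^{-2}=b(u)^{-3/2}\cdot b(u)^{-1/2}$ and applies two different lower bounds simultaneously --- the relative bound $b(u)\geq b(s)e^{\sqrt{\kappa_0}(u-s)}$ for $u\geq s$ (a consequence of the Riccati inequality $\dot b/b\geq\sqrt{\kappa_0}$) on the $-3/2$ power, so that the prefactor $b(s)^3$ cancels, and the Sturm bound $b(u)\geq\sinh(\sqrt{\kappa_0}u)/\sqrt{\kappa_0}$ on the remaining $-1/2$ power. Everything then reduces to the explicit integral $\int_0^\infty e^{3s}\big(\int_s^\infty e^{-3u/2}(\sinh u)^{-1/2}du\big)^2ds=2/3$, giving $|V(a/b)|\leq \tfrac{2}{3}|d\kappa|_{L^\infty}/\kappa_0$ and hence, after the factor $\tfrac12$ coming from $I_0^*I_0(|f|)=2\int_{S_xM}\int_0^{\ell}|f|$, exactly the stated constant. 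Note that since you retain the $1/(2\pi)$ prefactor that the paper discards, any uniform bound up to $4\pi/(3\kappa_0)$ on the integral would already yield the proposition, so the statement is recoverable with an estimate much cruder than the sharp one you claim; but as written the key pointwise inequality is asserted rather than proven.
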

\begin{proof} 
Let $a(x,v,t)$ and $b(x,v,t)$ the functions defined by \eqref{champsjacobi}. If we can prove that 
\[ \Big|V\Big(\frac{a(x,v,t)}{b(x,v,t)}\Big)\Big|\leq C_0\]
uniformly in $x,v,t$, then the integral in 
\eqref{Wla} is convergent for almost every $x$ when $\la=0$, for each $f\in L^2$. 
We get in this case that for each $f\in L^p(M)$ with $p\in [2,\infty]$
\[ ||Wf||_{L^p(M)}\leq C_0\Big(\int_{M}\Big|\int_{0}^{\ell(x,v)}|\pi_0^*f(\varphi_t(x,v))|dt dS_x(v)\Big|^pdx\Big)^{1/p}
= \frac{C_0}{2} ||I_0^*I_0(|f|)||_{L^p(M)}.
\]
Clearly if $\kappa\leq 0$, we have $a(x,v,t)\geq 1$ and $b(x,v,t)\geq 0$, and both are increasing in $t$.
First, by Wronskian constancy, we get for all $t>0$ and $(x,v)\in SM$
\[ \pl_t\Big(\frac{a(x,v,t)}{b(x,v,t)}\Big)=\frac{-1}{b^2(x,v,t)}\]
To simplify notations, we will often drop the $(x,v)$ dependence below.
The function $r(s):=\dot{b}(s)/b(s)$ satisfies the Riccati equation $\dot{r}(s)+r^2(s)+\kappa(x(s))=0$
with $r(s)>0$ and $r(s)\to +\infty$ as $s\to 0^+$. We claim that $r(s)\geq \sqrt{\kappa_0}$ for all $s>0$: 
indeed if it were not the case, there is $\eps>0$ small and $s>0$ so that $r^2(s)=\kappa_0-\eps$. 
Let $s_0>0$ be the first time so that this happens, then $\dot{r}(s_0)=-\kappa(x(s_0))-r^2(s_0)\geq 
\eps$ and thus $r(s)<\kappa_0-\eps$ for all $s<s_0$ close enough to $s_0$, leading to a contradiction.
We then get for $s>t>0$ 
\begin{equation}\label{btbs}  
b(x,v,s)\geq b(x,v,t)e^{\sqrt{\kappa_0}(s-t)}
\end{equation}
Notice that by Sturm comparison theorem,  we also have for $t\geq 0$
\begin{equation}\label{Rauch}
b(x,v,t)\geq \frac{1}{\sqrt{\kappa_0}}\sinh(\sqrt{\kappa_0}t).
\end{equation}
Then, when $(x,v)\in \Gamma_-$, $c(x,v):=\lim_{t\to +\infty}\frac{a(x,v,t)}{b(x,v,t)}$ exists and is continuous in $(x,v)$, moreover for $t>0$ we deduce from \eqref{btbs}
\[ 0\leq \frac{a(x,v,t)}{b(x,v,t)}-c(x,v)=\int_{t}^{\infty}\frac{1}{b^2(x,v,s)}ds\leq \frac{1}{2\sqrt{\kappa_0}b^2(x,v,t)}.\]
In fact we have more generally the same estimate for all $(x,v)\in SM^\circ$: for $t\in(0,\ell(x,v)]$
\begin{equation}\label{estimeea/b} 
0\leq \frac{a(x,v,t)}{b(x,v,t)}-c(x,v)\leq \int_{t}^{\ell(x,v)}\frac{1}{b^2(x,v,s)}ds
\end{equation}
where we have denoted $c(x,v):=\frac{a(x,v,\ell(x,v))}{b(x,v,\ell(x,v))}$.
Let us set $B(x,v,t)=Vb(x,v,t)$ and $A(x,v,t)=Va(x,v,t)$.
 We have 
\[ \ddot{A}(t)+\kappa(x(t))A(t)=-b(t)a(t)\kappa_{\perp}(t) , \quad \ddot{B}(t)+\kappa(x(t))B(t)=-b^2(t)\kappa_{\perp}(t)\]
where $\kappa_\perp(t)=d\kappa(x(t)).Jv(t)$ and $\varphi_t(x,v)=(x(t),v(t))$. We notice that 
$\kappa_\perp(t)=(X_\perp \pi_0^*\kappa)(\varphi_t(x,v))$.
 By Duhamel formula, we obtain 
\[ \begin{gathered}
A(t)=-\int_{0}^t (a(s)b(t)-a(t)b(s))a(s)b(s)\kappa_\perp(s)ds, \\
B(t)=-\int_{0}^t (a(s)b(t)-a(t)b(s))b^2(s)\kappa_\perp(s)ds
\end{gathered}
\]
which gives 
\begin{equation}\label{Va/b} 
V\Big(\frac{a}{b}\Big)(t)=-\frac{1}{b^2(t)}\int_{0}^t (a(s)b(t)-a(t)b(s))^2b(s)\kappa_\perp(s)ds.
\end{equation}
We want to give a bound on this quantity. 
We see that for $\ell(x,v)>t>s>0$, 
\[ (a(s)b(t)-a(t)b(s))^2\leq b(t)^2b(s)^2\Big(\int_{s}^{\ell(x,v)}\frac{1}{b(u)^2}du\Big)^2.\]
Combining with  \eqref{Va/b}, we deduce the uniform pointwise estimates
\begin{equation}\label{2estimees}
    \begin{gathered} 
	\Big|V\Big(\frac{a(t)}{b(t)}\Big)\Big|\leq 
	|d\kappa|_{L^\infty}\int_0^tb(s)^3\Big(\int_s^{\ell(x,v)} \frac{1}{b(u)^2}du\Big)^2ds. 
    \end{gathered}
\end{equation}
Using \eqref{Rauch} and \eqref{btbs}, we obtain  
\[\Big|V\Big(\frac{a(t)}{b(t)}\Big)\Big|\leq 
	\frac{|d\kappa|_{L^\infty}}{\kappa_0} 
	\int_0^\infty e^{3s}\Big(\int_{s}^\infty \frac{e^{-\frac{3}{2}u}}{\sqrt{\sinh u}}du
	\Big)^2ds
	  \]
and the double integral is a finite constant which, after some calculation turns out to be equal to $2/3$.
\end{proof}

\subsection{The norm of $I_0^*I_0$}\label{normofPi0}
We will  give an estimate in negative curvature of the norm of the operator 
\begin{equation}\label{defofPi}
\Pi_0:=I_0^*I_0: L^p(M)\to L^p(M)
\end{equation} 
by considering the geometry in the universal cover.
We let $\til{M}$ be the universal cover of $M$, equipped with the pull-back metric $\tilde{g}$ of $g$.  
Since $g$ has negative curvature, the manifold $\til{M}$ is a non-compact simply connected manifold with boundary and the fundamental group $\Gamma:=\pi_1(M)$ of $M$ acts properly discontinuously on $\til{M}$ by isometries with respect to $\til{g}$. Denote by $\pi_{\Gamma}:\til{M}\to M$ the covering map, and by abuse of notation we also call $\pi_\Gamma:S\til{M}\to SM$ the covering map where $S\til{M}$ is the unit tangent bundle of $(\til{M},\til{g})$. Fix a fundamental domain 
$F\subset \til{M}$ for the action of $\Gamma$, then $F$ is compact since $M$ is compact. 
If $f\in L^p(M)$ with $p\in(1,\infty)$, let $\tilde{f}$ be its lift to $\til{M}$ (which is $\Gamma$-automorphic), then we have 
for almost every $\til{x}\in F$ and $x:=\pi_{\Gamma}(\til{x})$ 
\begin{equation}\label{Pi0} 
\Pi_0f(x)= 2\int_{S_x\til{M}}\int_{0}^{\tilde{\ell}_{+}(\til{x},\til{v})} (\pi_0^*\tilde{f})(\til{\varphi}_{t}(\tilde{x},\tilde{v}))dt dS_{\tilde{x}}(\til{v})
\end{equation} 
where $\tilde{\varphi}_t$ is the geodesic flow lifted to $S\til{M}$ and $\tilde{\ell}_{+}=\pi_{\Gamma}^*\ell_+$. 
Define the operator $\til{\Pi}_0$ on $\til{M}$ as follows: for $u\in L^p(\til{M})$ with compact support, let 
\[\til{\Pi}_0u(\tilde{x}):= 2\int_{S_x\til{M}}\int_{0}^{\tilde{\ell}_{+}(\til{x},\til{v})} (\pi_0^*u)(\til{\varphi}_{t}(\tilde{x},\tilde{v}))dt dS_{\tilde{x}}(\til{v})\]
which is in $L^1_{\rm loc}(\til{M})$ by \eqref{R+L1}. 
With the same analysis as in Proposition \ref{DefW}, we find the integral kernel of $\til{\Pi}_0:$ 
\[\begin{gathered} \til{\Pi}_0u(\tilde{x})=\int_{\til{M}}\mc{K}(\tilde{x},\tilde{x}') 
u(\tilde{x}')
{\rm dvol}_{\tilde{g}}(\tilde{x}') , \\ 
\textrm{ with }\mc{K}(\tilde{x},\tilde{x}') :=\Big[\frac{2}{\tilde{b}(\tilde{x},\tilde{v},t)}\Big]\Big|_{t\tilde{v}=\til{\exp}_{\tilde{x}}^{-1}(\tilde{x}')} \in C^\infty(\til{M}\x\til{M}\setminus {\rm diag})
\end{gathered}
\]
if $\til{\exp}$ is the exponential map for $\til{M}$ and $\tilde{b}$ solves the same Jacobi equation as \eqref{champsjacobi} but for $(\til{M},\tilde{g})$. We clearly have $\tilde{b}(\tilde{x},\tilde{v},t)=b(x,v,t)$ if $\pi_\Gamma(\tilde{x},\tilde{v})=(x,v)$ and the integral kernel $\mc{K}$ of $\til{\Pi}_0$ is positive, symmetric and satisfies 
$\mc{K}(\gamma \tilde{x},\gamma \tilde{x}')=\mc{K}(\tilde{x},\tilde{x}')$ for all $\gamma\in \Gamma$. From \eqref{Pi0}, we can write for almost every  $x=\pi_\Gamma(\tilde{x})$
\[ \Pi_0f(x)= \int_{\til{M}}\mc{K}(\tilde{x},\tilde{x}')\tilde{f}(\tilde{x}'){\rm dvol}_{\tilde{g}}(\tilde{x}').\]
Let $f_0\in L^p(\til{M})$ be supported in $F$ and such that $f_0(\tilde{x})=f(x)$ 
if $\tilde{x}\in F$, then $\tilde{f}(\tilde{x})=\sum_{\gamma\in \Gamma}f_0(\gamma \tilde{x})$ and we get for almost every $\tilde{x}\in F$ and $\pi_\Gamma(\tilde{x})=x$
\[
\Pi_0f(x)= \sum_{\gamma\in\Gamma}\int_{\til{M}}\mc{K}(\tilde{x},\tilde{x}')f_0(\gamma\tilde{x}'){\rm dvol}_{\tilde{g}}(\tilde{x}')
 = \sum_{\gamma\in\Gamma}\int_{F}\mc{K}(\gamma\tilde{x},\tilde{x}')f_0(\tilde{x}'){\rm dvol}_{\tilde{g}}(\tilde{x}')
\]
by using that $\gamma$ is an isometry of $\tilde{g}$ and the invariance of $\mc{K}$ by $\Gamma$ mentioned
above. We can then view $\Pi_0$ as an operator acting on $L^p(F)$ (for $p\in(1,\infty)$)
with integral kernel 
\[ \mc{K}_M(\tilde{x},\tilde{x}'):=\sum_{\gamma\in \Gamma}\mc{K}(\gamma\tilde{x},\tilde{x}')=
\sum_{\gamma\in \Gamma}\mc{K}(\tilde{x},\gamma\tilde{x}')
.\]
The sum makes sense as an $L^1(F\x F)$ function since $\mc{K}>0$ and $\Pi_0f\in L^1(M)$
for all $f\in L^\infty(M)$. In fact, we also know that $\Pi_0$ maps $L^\infty(M)$ to 
$L^\infty(M)$ (since it is the restriction of a pseudo-differential operator of order $-1$ on $M_e^\circ$)
thus, since $\mc{K}_M>0$, 
\[\sup_{\tilde{x}\in F}\int_{F}\mc{K}_M(\tilde{x},\tilde{x}'){\rm dvol}_{\tilde{g}}(\tilde{x}')<\infty.\]
To estimate its $L^p\to L^p$ norm for $p\in(1,\infty)$, we can use Schur's lemma and since $\mc{K}_M$ is symmetric, this gives directly the following
\begin{lemm}\label{normofPi}
With the notation above, the operator $\Pi_0$ has norm bounded by 
\[||\Pi_0||_{L^p(M)\to L^p(M)}\leq 
\sup_{\tilde{x}\in F}\int_{F}\mc{K}_M(\tilde{x},\tilde{x}'){\rm dvol}_{\tilde{g}}(\tilde{x}')
=\sup_{\tilde{x}\in F}\int_{\til{M}}\mc{K}(\tilde{x},\tilde{x}'){\rm dvol}_{\tilde{g}}(\tilde{x}').\]
\end{lemm}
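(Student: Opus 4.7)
The plan is to recognize the claim as a direct application of Schur's test to the nonnegative symmetric kernel $\mc{K}_M$ on the compact fundamental domain $F$, followed by the ``unfolding'' identity that rewrites the row integral as an integral over the full cover $\til M$.

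First I would check that $\mc{K}_M$ is symmetric on $F\x F$. The kernel $\mc{K}(\tilde x,\tilde x')$ is already stated to be symmetric and $\Gamma$-invariant in the excerpt, so substituting $\gamma \mapsto \gamma^{-1}$ in the defining sum gives
\[
\mc{K}_M(\tilde x',\tilde x)=\sum_{\gamma\in\Gamma}\mc{K}(\gamma\tilde x',\tilde x)=\sum_{\gamma\in\Gamma}\mc{K}(\tilde x,\gamma\tilde x')=\mc{K}_M(\tilde x,\tilde x').
\]
Let $C:=\sup_{\tilde x\in F}\int_F \mc{K}_M(\tilde x,\tilde x')\,{\rm dvol}_{\tilde g}(\tilde x')$, which is finite by the $L^\infty$-boundedness of $\Pi_0$ already recorded in the excerpt. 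By symmetry the same bound holds for the column integral. Schur's test with weights $h_1=h_2\equiv 1$ then yields, for every $p\in[1,\infty]$,
\[
\|\Pi_0\|_{L^p(F)\to L^p(F)}\leq C^{1/p}C^{1/p'}=C,
\]
and under the identification of $L^p(M)$ with $L^p(F)$ (via the fundamental domain and the covering map, which is an isometry of volume elements), this is exactly the desired bound on $\|\Pi_0\|_{L^p(M)\to L^p(M)}$.

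The last task is the unfolding identity. Using $\mc{K}_M(\tilde x,\tilde x')=\sum_{\gamma\in\Gamma}\mc{K}(\tilde x,\gamma\tilde x')$, the nonnegativity of $\mc{K}$ and Tonelli's theorem to exchange the sum and the integral, and the fact that each $\gamma\in\Gamma$ is a $\tilde g$-isometry, I get
\[
\int_F \mc{K}_M(\tilde x,\tilde x')\,{\rm dvol}_{\tilde g}(\tilde x')=\sum_{\gamma\in\Gamma}\int_{\gamma F}\mc{K}(\tilde x,\tilde y)\,{\rm dvol}_{\tilde g}(\tilde y)=\int_{\til M}\mc{K}(\tilde x,\tilde y)\,{\rm dvol}_{\tilde g}(\tilde y),
\]
the last equality because the translates $\{\gamma F\}_{\gamma\in\Gamma}$ tile $\til M$ up to a set of measure zero. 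This gives the second equality in the lemma.

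There is essentially no obstacle here beyond checking symmetry of $\mc{K}_M$ and justifying the sum/integral exchange; both are immediate from positivity and the stated symmetry and $\Gamma$-invariance of $\mc{K}$. If anything deserves care, it is only the identification of $\Pi_0$ on $L^p(M)$ with the operator on $L^p(F)$ having kernel $\mc{K}_M$, but this has already been established in the paragraphs preceding the lemma.
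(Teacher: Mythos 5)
Your proposal is correct and follows exactly the paper's argument: the paper proves the lemma by invoking Schur's test for the nonnegative symmetric kernel $\mc{K}_M$ (with the finiteness of the row integral coming from the $L^\infty\to L^\infty$ boundedness of $\Pi_0$), and the unfolding identity is the immediate consequence of Tonelli and the tiling of $\til{M}$ by $\{\gamma F\}_{\gamma\in\Gamma}$ that you describe. You have merely written out the routine details that the paper leaves implicit.
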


From now on, we shall work in a neighborhood of a constant curvature metric $g_0$ and estimate 
the norm $\Pi_0$ in that neighborhood. Let us denote by $\hh^2$ the hyperbolic space with curvature $-1$, realized as the unit disk $\mathbb{D}^2\subset \cc$ with metric 
$g_{\hh^2}=\frac{4|dz|^2}{(1-|z|^2)^2}$. 
Let $(M,g_0)$ have constant curvature $\kappa(x)=-\kappa_0$ for some $\kappa_0<0$, 
we can extend $M$ to a non-compact complete manifold $(M_c,g_c)$ with constant curvature.
Indeed, this can be done as follows. The connected components  $(S_i)_{i=1,\dots,N}$ of the boundary $\pl M$ are circles. 
Let $\tau_i>0$ be the distance from $S_i$ to the unique geodesic $\gamma_i$ 
homotopic to $S_i$ in $M$, these geodesics form the boundary of the convex core $C(M)$ of $M$. If $c_i$ denotes the length of $\gamma_i$,
then in normal geodesic coordinates $(\tau,\alpha)\in [0,\tau_i]\x (\rr/c_i\zz)_{\alpha}$ from $\gamma_i$ in the collar between $S_i$ and $\gamma_i$, the metric  is given by
 \begin{equation}\label{gfunnel} 
 g_0= d\tau^2+ \cosh(\sqrt{\kappa_0}\tau)d\alpha^2.
 \end{equation}
Then it suffices to replace the region between $S_i$ and $\gamma_i$ by the full hyperbolic half-cylinder $[0,\infty)_\tau\x (\rr/c_i\zz)_{\alpha}$ with the metric \eqref{gfunnel} and we denote by $(M_c,g_c)$ the obtained complete surface. 
It is realized as a quotient $\Gamma\backslash \hh_{\kappa_0}^2$
by a convex co-compact group of isometries $\Gamma\subset {\rm PSL}_2(\rr)$ 
of the hyperbolic space $\hh_{\kappa_0}^2$ with curvature $-\kappa_0$; this space $\hh^2_{\kappa_0}$ is just the unit disk $\mathbb{D}^2\subset \cc$ with metric 
$\kappa_0^{-1}g_{\hh^2}$. Notice that $\Gamma$ is also convex a co-compact group of isometries of 
$(\hh^{2},g_{\hh^2})$.
The exponent of convergence of $\Gamma$ is defined to be 
\begin{equation}\label{deltaGamma} 
\delta_{\Gamma}:=\inf \{s \in(0,1) ; \sum_{\gamma\in \Gamma} e^{-sd_{\hh^2}(x,\gamma x)}<\infty\}
\end{equation}
where $d_{\hh^2}(\cdot,\cdot)$ denotes the Riemannian distance of $(\hh^{2},g_{\hh^2})$ 
and $x$ is any point fixed in $\hh^{2}$. 
Let $\Lambda_\Gamma\subset \mathbb{S}^1$ be the limit set of the group $\Gamma$ (ie. the set of accumulation points in $\bbar{\mathbb{D}^2}$ of the orbits $\Gamma.x$ in $\mathbb{D}^2$).
By a result of Patterson \cite{Pat} and Sullivan \cite{Su}, 
\[\delta_{\Gamma}=\dim_{\rm Haus}(\Lambda_\Gamma)=\demi (\dim_{\rm Haus}(K)-1)\]
where $K\subset SM\subset SM_c$ is the trapped set of the geodesic flow. The universal cover 
$\til{M}$ of $M$ is a subset of the open disk $\mathbb{D}^2$ with infinitely many boundary components which are pieces of circles intersecting $\mathbb{S}^1=\pl\mathbb{D}^2$ at their endpoints. The solution of  \eqref{champsjacobi} is $b(x,v,t)=\frac{1}{\sqrt{\kappa_0}}\sinh(\sqrt{\kappa_0}t)$ and the integral kernel 
 of $\til{\Pi}_0$ and $\Pi_0$ are thus
\begin{equation}\label{mcK} 
\mc{K}(\tilde{x},\tilde{x}')=\frac{2\sqrt{\kappa_0}}{\sinh(d_{\hh^2}(\tilde{x},\tilde{x}'))}, 
\quad \mc{K}_M(x,x')=\sum_{\gamma\in \Gamma}
\frac{2\sqrt{\kappa_0}}{\sinh(d_{\hh^2}(\gamma\tilde{x},\tilde{x}'))}\end{equation}
if $\pi_{\Gamma}(\tilde{x})=x$, $\pi_{\Gamma}(\tilde{x'})=x'$ and $\tilde{x},\tilde{x}'\in F$ with $F\subset \hh^2$ 
a fundamental domain. Notice that the expression \eqref{mcK} for $\mc{K}_M$ extends to $M_c\x M_c$ and this defines
an operator $\Pi_0^{M_c}$ so that $\Pi_0f=(\Pi_{0}^{M_c}f)|_{M}$ if $f\in L^2(M)$.

We prove the following 
\begin{theo}\label{normL^2Pi0}
Let $(M,g_0)$ be a manifold with strictly convex boundary and constant negative curvature $-\kappa_0$, let 
$\Gamma\subset {\rm PSL}_2(\rr)$ be its fundamental group and $\delta_{\Gamma}\in [0,1)$ the exponent of convergence of $\Gamma$ as defined in \eqref{deltaGamma}. 
Let $\la_1,\la_2 \in (0,1]$ which satisfy the bound 
$1\geq \la_1\la_2>\max (\delta_{\Gamma},\demi)$ and set $1-\la:=\la_1\la_2$, then 
for all metric $g$ on $M$ with strictly convex boundary and Gauss curvature $\kappa(x)$ satisfying
\[
\la_1^{2}g_0\leq g\leq \la_1^{-2} g_0, \quad  \kappa(x) \leq -\la_2^2\kappa_0
\]
the operator $\Pi_0=I_0^*I_0$ of $g$ has norm bounded by 
\[\begin{gathered}
 ||\Pi_0||_{L^2(M,g)\to L^2(M,g)}\leq \frac{4\la_2}{\sqrt{\kappa_0}\la_1^4}\Big(\frac{\Gamma(\frac{1-2\la}{4})\Gamma(\frac{1+2\la}{4})}{\Gamma(\frac{1}{4})\Gamma(\frac{3}{4})}\Big)^2
\quad \textrm{ if } \delta_{\Gamma}\leq 1/2\\
||\Pi_0||_{L^2(M,g)\to L^2(M,g)}\leq \frac{4\la_2}{\sqrt{\kappa_0}\la_1^4}\frac{\Gamma(\frac{\delta_{\Gamma}-\la}{2})\Gamma(\frac{\delta_{\Gamma}+\la}{2})\Gamma(\frac{1-\la-\delta_{\Gamma}}{2})\Gamma(\frac{1+\la-\delta_{\Gamma}}{2})}
{\Gamma(\frac{1-\delta_{\Gamma}}{2})\Gamma(\frac{\delta_\Gamma}{2})\Gamma(1-\frac{\delta_{\Gamma}}{2})\Gamma(1+\frac{\delta_{\Gamma}}{2})}  \quad \textrm{ if }\delta_{\Gamma}>1/2.
\end{gathered}\]
\end{theo}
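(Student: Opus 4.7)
The plan is to reduce the $L^2$ operator-norm bound to an explicit spectral computation on a convex cocompact hyperbolic surface, via Schur's Lemma~\ref{normofPi} and Sturm comparison of Jacobi fields. The Jacobi field $\tilde b$ in the kernel formula $\mc{K}(\tilde x, \tilde x') = 2/\tilde b(\tilde x, \tilde v, t)$ solves the ODE \eqref{champsjacobi}, so the curvature bound $\kappa \leq -\lambda_2^2 \kappa_0$ and Sturm--Liouville comparison give $\tilde b(\tilde x, \tilde v, t) \geq (\lambda_2\sqrt{\kappa_0})^{-1}\sinh(\lambda_2\sqrt{\kappa_0}\, t)$. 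Since the bi-Lipschitz estimate $g \geq \lambda_1^2 g_0$ yields $d_{\tilde g} \geq \lambda_1 d_{\tilde g_0}$, setting $1 - \lambda = \lambda_1 \lambda_2$ and using monotonicity of $\sinh$ produces the pointwise kernel bound
\[
\mc{K}(\tilde x, \tilde x') \leq \frac{2\lambda_2\sqrt{\kappa_0}}{\sinh\bigl((1-\lambda)\sqrt{\kappa_0}\, d_{\tilde g_0}(\tilde x, \tilde x')\bigr)}.
\]
Combined with the volume comparison $dV_{\tilde g} \leq \lambda_1^{-2} dV_{\tilde g_0}$ and the bi-Lipschitz equivalence of $L^2(M,g)$ with $L^2(M,g_0)$ (isomorphism constants $\lambda_1^{\pm 1}$), Schur's bound will reduce the control of $\|\Pi_0\|_{L^2(M,g)\to L^2(M,g)}$, up to the prefactor $4\lambda_2/(\sqrt{\kappa_0}\lambda_1^4)$, to bounding the $L^2(M,g_0)$-operator norm of the positive symmetric operator $T$ whose Schwartz kernel is $\sinh\bigl((1-\lambda)\sqrt{\kappa_0}\, d_{g_0}(\cdot,\cdot)\bigr)^{-1}$.

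Next, rescaling to unit curvature via $h = \kappa_0 g_0$ identifies $(M,h)$ with a subset of the convex cocompact hyperbolic surface $M_c' = \Gamma\backslash \hh^2$ of critical exponent $\delta_\Gamma$. Extending functions by zero from $M$ to $M_c'$ gives
\[
\|T\|_{L^2(M,h)\to L^2(M,h)} \leq \|T_{\rm hyp}\|_{L^2(M_c',h)\to L^2(M_c',h)},
\]
where $T_{\rm hyp}$ is the operator on $M_c'$ whose Schwartz kernel on the universal cover is the $\Gamma$-periodized sum $\sum_{\gamma \in \Gamma}\sinh\bigl((1-\lambda) d_{\hh^2}(\tilde x, \gamma \tilde x')\bigr)^{-1}$. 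The hypothesis $1-\lambda > \delta_\Gamma$ ensures convergence of this Poincar\'e-type series away from the diagonal, by direct comparison with the usual Poincar\'e series of $\Gamma$ at exponent $1-\lambda$.

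The final step, which is the technical heart of the argument and is carried out in the Appendix, identifies $T_{\rm hyp}$ with an explicit function $F_\lambda(-\Delta_{M_c'})$ of the hyperbolic Laplacian. Via the Helgason--Fourier transform on $\hh^2$ one computes the action of the convolution kernel $\sinh((1-\lambda) d)^{-1}$ on spherical functions $\phi_s$, reducing to a beta-type one-dimensional integral evaluable in closed form as a product of Gamma functions at arguments involving $s$ and $\lambda$; convergence of this integral requires $\mathrm{Re}(s) \in (\lambda, 1-\lambda)$, matching the hypothesis $\lambda_1 \lambda_2 > \max(\delta_\Gamma, 1/2)$. The spectrum of $-\Delta_{M_c'}$ on $L^2(M_c',h)$ is $[1/4, \infty)$ when $\delta_\Gamma \leq 1/2$, and extends down by finitely many eigenvalues to $\delta_\Gamma(1-\delta_\Gamma)$ when $\delta_\Gamma > 1/2$, by Patterson--Sullivan theory. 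Since $F_\lambda$ is maximal at the bottom of the spectrum, evaluating at $s = 1/2$ produces the expression $(\Gamma(\tfrac{1-2\lambda}{4})\Gamma(\tfrac{1+2\lambda}{4})/\Gamma(\tfrac{1}{4})\Gamma(\tfrac{3}{4}))^2$ (with the four Gamma factors collapsing to two squared pairs at $s=1/2$), and evaluating at $s = \delta_\Gamma$ produces the four-Gamma expression of the second case; collecting the accumulated prefactors yields the stated bound. The main obstacle is precisely the spectral identification of $T_{\rm hyp}$: it demands the explicit evaluation of the beta integral in closed form and a careful meromorphic continuation in $s$ to access the bottom of the spectrum, especially in the regime $\delta_\Gamma > 1/2$ where the bottom is a true $L^2$ eigenvalue rather than the edge of the continuous spectrum.
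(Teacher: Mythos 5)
Your overall architecture matches the paper's: Sturm comparison of Jacobi fields to majorize the kernel of $\Pi_0$ pointwise, bi-Lipschitz and volume comparison to pass from $g$ to the constant-curvature metric, extension by zero into the complete convex co-compact quotient $\Gamma\backslash\hh^2$, and an exact spectral computation there evaluated at the bottom of the spectrum. However, there is a genuine gap at the pivot between the comparison step and the spectral step. After the Sturm/bi-Lipschitz comparison you arrive at the kernel $\sinh((1-\la)d_{\hh^2}(\cdot,\cdot))^{-1}$ and propose to diagonalize \emph{that} operator by computing its spherical transform "in closed form as a product of Gamma functions", asserting that at $s=1/2$ it yields $\bigl(\Gamma(\tfrac{1-2\la}{4})\Gamma(\tfrac{1+2\la}{4})/\Gamma(\tfrac14)\Gamma(\tfrac34)\bigr)^2$. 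That identification is not correct: the stated Gamma expression is the spectral function $H_\la$ of the operator $\Pi_0^\la$ whose radial kernel is $2\cosh(\la d)/\sinh(d)$, not of the operator with kernel $1/\sinh((1-\la)d)$. The paper inserts here the elementary but essential inequality $\sinh((1-\la)t)\cosh(\la t)\geq \tfrac12\sinh(t)$, which majorizes the comparison kernel by $2\cosh(\la d)/\sinh(d)$; this is precisely the kernel of the \emph{attenuated} normal operator $\Pi_0^\la f(x)=\int_{S_xM}\int_\rr e^{\la t}\pi_0^*f(\varphi_t(x,v))\,dt\,dS_x(v)$, and it is only because of this flow-average structure that the appendix can diagonalize it through the principal series of $\SL_2(\rr)$ (Harish-Chandra's Plancherel formula), where the computation factorizes into the one-dimensional beta integrals $F_\la(r)=\int_0^\infty x^{-\demi-ir-\la}(1+x^2)^{-\demi+ir}dx$ and yields the four-Gamma quotient. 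The Mehler--Fock/Legendre integral of $1/\sinh((1-\la)d)$ against spherical functions is a different quantity, is not evaluated anywhere in the paper, and there is no reason it equals the stated constant; so as written your final step does not produce the theorem's bound. (If you instead first majorize by $2\cosh(\la d)/\sinh(d)$, positivity of both kernels lets you conclude, and you recover the paper's proof.)

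A secondary point: you assert that the spectral multiplier "is maximal at the bottom of the spectrum" without justification. This is a real step in the paper: one must show $t\mapsto H_\la(4t^2)$ is decreasing on $[0,\infty)$ (for the continuous spectrum) and that the corresponding function of $t=\delta_\Gamma-\tfrac12$ is increasing on $[0,\tfrac{1-2\la}{4})$ (to handle the finitely many eigenvalues below $1/4$ when $\delta_\Gamma>1/2$); the paper proves both via the series expansion of the digamma function. Without some such monotonicity argument, evaluating at $s=1/2$ or $s=\delta_\Gamma$ does not yet give the operator norm.
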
 
\begin{proof} Let $\la\in[0,1/2)$ so that $1-\la=\la_2\la_1$. 
First, by the comparison result \eqref{Rauch}, we have the pointwise bound on the kernel 
$\mc{K}$ of $\til{\Pi}_0$ in the universal cover $\til{M}\subset \mathbb{D}^2$
\[\begin{split} 
\mc{K}(x,x')\leq &\frac{\sqrt{\kappa_0}\la_2}{\sinh(\la_2\sqrt{\kappa_0}d_{g}(x,x'))}\leq 
\frac{\la_2\sqrt{\kappa_0}}{\sinh(\la_1\la_2\sqrt{\kappa_0}d_{g_0}(x,x'))}
\leq 
\frac{\la_2\sqrt{\kappa_0}}{\sinh((1-\la)d_{\hh^2}(x,x'))}
\end{split}\] 
where we used $d_g(x,x')\geq \la_1d_{g_0}(x,x')$ and $d_{\hh^2}(x,x')=\sqrt{\kappa_0}d_{g_0}(x,x')$ on $\til{M}$.
Using the lower bound $\sinh((1-\la)t)\cosh(\la t)\geq \demi \sinh(t)$ for $t\geq 0$, we get 
\[\mc{K}(x,x')\leq \frac{2\la_2\sqrt{\kappa_0}\cosh(\la d_{\hh^2}(x,x'))}{\sinh(d_{\hh^2}(x,x'))}\]
This implies that the kernel $\mc{K}_M$ of $\Pi_0$ satisfies 
\begin{equation}\label{upperboundmcK} 
\mc{K}_M(x,x') \leq \la_2\sqrt{\kappa_0}\sum_{\gamma\in \Gamma}
\frac{2\cosh(\la d_{\hh^2}(x,\gamma x'))}{\sinh(d_{\hh^2}(x,\gamma x'))}
\end{equation}
provided that the sum converges, which is the case if $\la<(1-\delta_{\Gamma})$. 
Let $g_1:=\kappa_0 g_0$ be the metric with Gauss curvature $-1$ on $M$ and let
$\Pi_0^{\la}$ be the operator on $M$ whose integral kernel with respect to the volume form ${\rm dvol}_{g_1}$ 
is $\sum_{\gamma\in \Gamma}
\frac{2\cosh(\la d_{\hh^2}(x,\gamma x'))}{\sinh(d_{\hh^2}(x,\gamma x'))}$. Let $M_c=\Gamma\backslash \hh^2$ 
equipped with the hyperbolic metric $g_c$ (with curvature $-1$) and consider the operator 
$\Pi_0^{M_c,\la}$ on $M_c$ defined by \eqref{Pi0lambda}. As explained above, 
the manifold $M_c$ can be viewed as a complete extension of the manifold with boundary $(M,g_1)$, and we have for each $f\in L^2(M,g_1)$ extended by $0$ on $M_c$
\[ (\Pi_0^{M_c,\la}f)|_{M}=\Pi_0^\la f.\] 
The operator  norm of $\Pi_0^{M_c,\la}$ on $L^2(M_c,g_c)$ is computed in Lemma \ref{Pi0hyp} and we denote this constant by $C(\la,\delta_{\Gamma}):=||\Pi_0^{M_c,\la}||_{L^2(M_c,g_c)\to L^2(M_c,g_c)}$. We then get 
\begin{equation}\label{compare}
||\Pi_0^\la f||_{L^2(M,g_1)}\leq C(\la,\delta_{\Gamma})||f||_{L^2(M,g_1)}.
\end{equation} 
Let $f\in L^2(M,g)$, we use \eqref{upperboundmcK} together with the fact �that the integral kernels of $\Pi_0$ and $\Pi_0^{\la}$ operator are positive and that 
\[ \frac{\la_1^2}{\kappa_0}\,{\rm dvol}_{g_1}\leq {\rm dvol}_g\leq \frac{\la_1^{-2}}{\kappa_0}
 \,{\rm dvol}_{g_1},\]
this gives the bound
\[\begin{split}
 \int_M |\Pi_0f(x)|^2{\rm dvol}_g(x)\leq & \la_2^2\la_1^{-6}\kappa_0^{-2}\int_M (\Pi_0^{\la}(|f|)(x))^2{\rm dvol}_{g_1}(x)\\
& \leq ||\Pi_0^{\la}||^2_{L^2(M,g_1)\to L^2(M,g_1)}\la_2^2\la_1^{-6}\kappa_0^{-2}||f||^2_{L^2(M,g_1)}\\
& \leq ||\Pi_0^{\la}||^2_{L^2(M,g_1)\to L^2(M,g_1)}\la_2^2\la_1^{-8}\kappa_0^{-1}||f||^2_{L^2(M,g)}.
\end{split}\]
The result then follows from \eqref{compare} and the value for $C(\la,\delta_\Gamma)$ given by Lemma \ref{Pi0hyp}.
\end{proof}
Combining Proposition \ref{normW} with Theorem \ref{normL^2Pi0}, we obtain Theorem \ref{Theo2}. 


\section{Numerical experiments}
We now illustrate with numerical examples the reconstruction formula \eqref{formulainv0} proposed in Theorem \ref{inversion}, i.e., we will reconstruct functions from knowledge of their ray transform, where the underlying surface with boundary is of one of the following two models: 
\begin{itemize}
    \item[$(i)$] A quotient of the Poincar\'e disk $\dd^2$ by a Schottky group $\Gamma$. 
    \item[$(ii)$] The cylinder $\rr/2\zz \times (-1,1)$ with a metric of circular symmetry such that the trapped set is     hyperbolic
\end{itemize}

In example $(i)$, the constancy of the curvature makes the operator $W$ vanish identically, so that the inversion is expected to be one-shot. In example $(ii)$, we will implement a partial Neumann series to invert the operator ${\rm Id} + W^2$. Much of the ideas and implementation draw from the second author's previous implementation \cite{Mo} in the case of non-trapping surfaces. The numerical domain can be viewed as a domain in $\mathbb{R}^2$ with an isotropic metric $g = e^{2\phi} (dx^2 + dy^2)$ (except for Experiment 5, where we use a metric of the form $g = h^2(y) dx^2 + dy^2$), and with certain identifications producing nontrivial topology and trapped geodesics.

The underlying grid is cartesian (subset of a $N \times N$ uniform discretization of $[-1,1]^2$ with $N=300$ here), which simplifies the computation of the operator $*d$. More specifically, in isothermal coordinates $(x,y,\theta)$ with an isotropic metric of the form $g = e^{2\phi} (dx^2 + dy^2)$, the operator $* d I_1^\star\mc{S}_g^{-1}$ appearing in \eqref{formulainv0} takes the expression
\begin{align*}
    * d I_1^\star \mc{S}_g^{-1} w (x,y) = e^{-2\phi(x,y)} \binom{\partial_{x}}{\partial_{y}} \cdot \left( e^{\phi} \int_{\sph^1} \binom{-\sin\theta}{\cos\theta} w (\varphi_{\ell_+(x,y,\theta)}(x,y,\theta))\ d\theta \right),
\end{align*}
where, despite the fact that this formula might not make sense for those $(x,y,\theta)$ in the trapped set, the hyperbolicity of the flow is such that, when discretizing the integral over $\sph^1$, numerical imperfections will make every geodesic exit the domain. The operators $\partial_{x}$, $\partial_{y}$ can then conveniently computed by finite differences. On the other hand, this discretization implies that every pixel will carry a different volume, and that accuracy of reconstructions and resolution may be visually position-dependent. To the authors' knowledge, litterature on numerical analysis of ray transforms on manifolds (covering, e.g., error estimates, appropriate sampling, regularization), is very scarce outside the well-understood Euclidean case \cite{N}, an exception being on geodesic and horocyclic transforms on the hyperbolic plane in \cite{Fr}, whose homogeneity allows for harmonic analysis.

The computation of geodesics is done in two ways depending on the purpose: 
\begin{itemize}
    \item When computing the ray transform of a given function, we solve an ODE and integrate the function along the way. The method used in that case is Heun's scheme.  
    \item In the case of quotients of $\dd$, where geodesics can be given by explicit formulas, we use these formulas to compute geodesic endpoints in a fast way during the backprojection step. 
\end{itemize}
Computations are performed with {\tt Matlab}, using GPU in order to speed up calculations. As in \cite{Mo}, the main bottleneck is the backprojection step, which requires computing ${\mathcal O}(N^2\times N_{\theta})$ geodesic endpoints, with $N$ the gridsize and $N_{\theta}$ the number of directions used to compute integral over $\mathbb{S}^1$. This process requires computing whole geodesics, more of whom take longer to exit the domain when trapping increases (this increase here is quantified by the fractal dimension of the limit set of the corresponding Schottky group). Accordingly, computations run within minutes on a personal computer with GPU card for the case of cylinders, hours for the case of Schottky groups with two generators.  

\subsection{Quotients of $\dd^2$ by Schottky groups with one generator} Here and below, we denote $\gamma_{\rmx,v}$ the unique geodesic passing through $\rmx = (x,y)$ with direction $v$. 

Fix $x\in (-1,0)$, let $a = \frac{2x}{x^2+1}$ and define $T_a:\dd^2\to \dd^2$ by $T_a(z) = \frac{z-a}{1-\overline{a}z}$ the unique hyperbolic translation mapping $x$ to $-x$ and define the surface $M(x) = \dd^2/\langle T_a \rangle$. A fundamental domain for this surface is delimited by the geodesics $\gamma_{(x,0),\bfe_y}$ and $\gamma_{(-x,0),\bfe_y}$. This surface has no boundary, so we cut one up using an arc circle which intersects the geodesic $\gamma_{(x,0),\bfe_y}$ at a right angle, which ensures 
that the boundary is smooth. This boundary has two connected components. The forward trapped set consists of, at every point, precisely two directions: the ones whose flow-out lands into either ``point at infinity'' $z=-1$ 
or $z=1$. The trapped set is simply the projection of the two oriented geodesics in $S\hh^2$ relating $z=-1$ to $z=1$, and these are two oriented closed geodesics in the quotient. 

{\bf Experiment 1:} We pick the function in Fig. \ref{fig:model1} (right), compute its ray transform on Fig. \ref{fig:model1_recons} (left) using $500$ boundary points on each top and bottom boundary, and $1000$ directions for each boundary point uniformly within $\left( \frac{-\pi}{2}, \frac{\pi}{2} \right)$. We then apply the inversion formula, discretizing the $\sph^1$ integrals using $500$ directions. The error is visualized on Fig. \ref{fig:model1_recons} (right). 

\begin{figure}[htpb]
    \centering
    \includegraphics[trim= 20 0 20 0, clip, height=0.28\textheight]{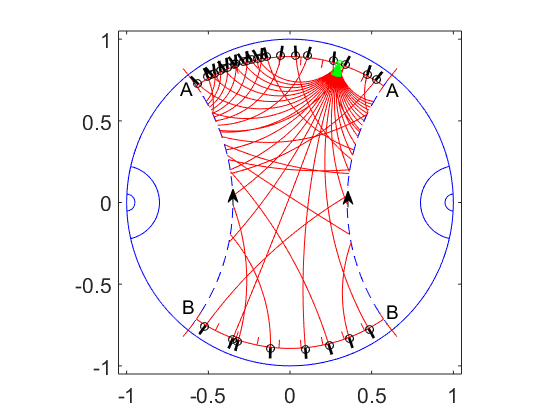}
    \includegraphics[trim= 40 0 40 0, clip, height=0.28\textheight]{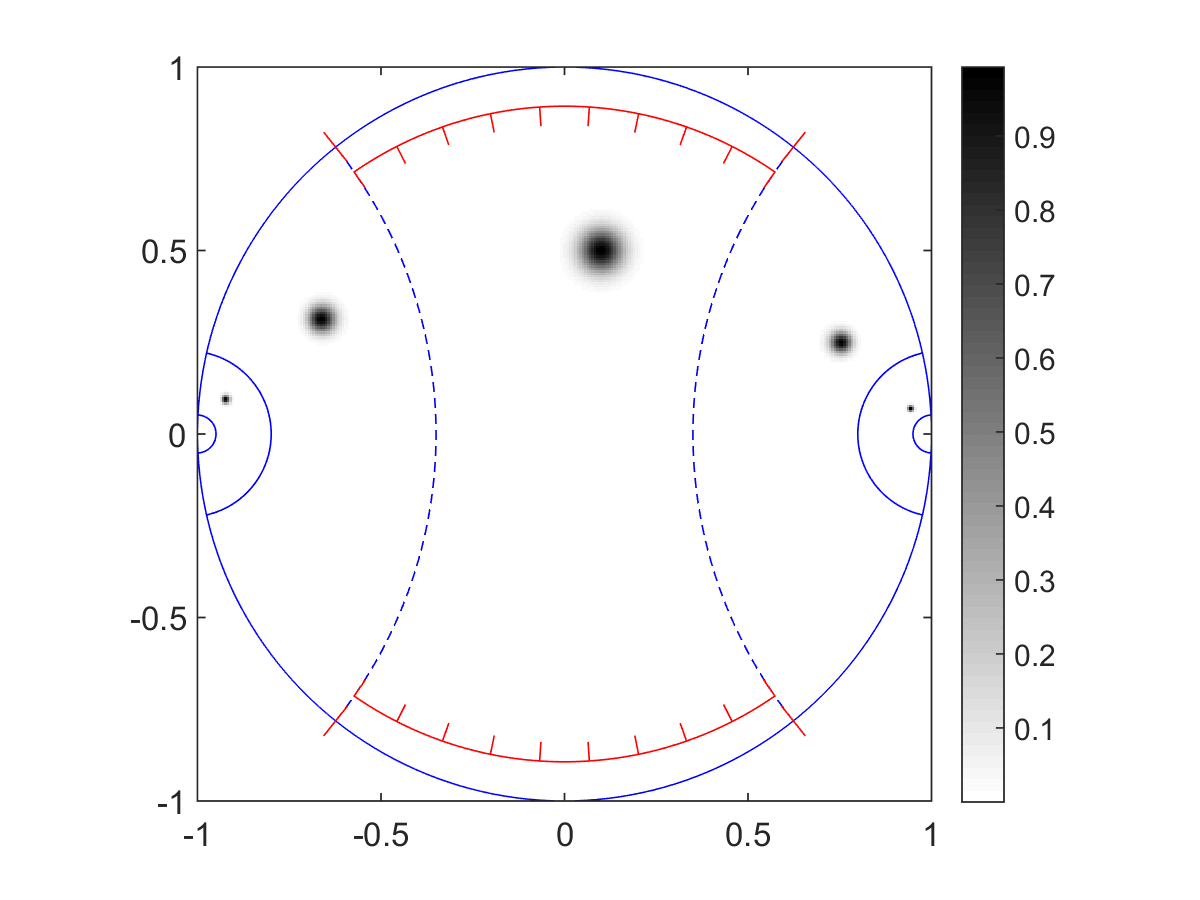}
    \caption{Left: the surface $M(-0.3)$ with a boundary cut out, with some geodesics cast from a boundary point superimposed. Right: example of a function whose ray transform is computed Fig. \ref{fig:model1_recons}}
    \label{fig:model1}
\end{figure}

\begin{figure}[htpb]
    \centering
    \includegraphics[trim=10 0 10 0, clip, height=0.28\textheight]{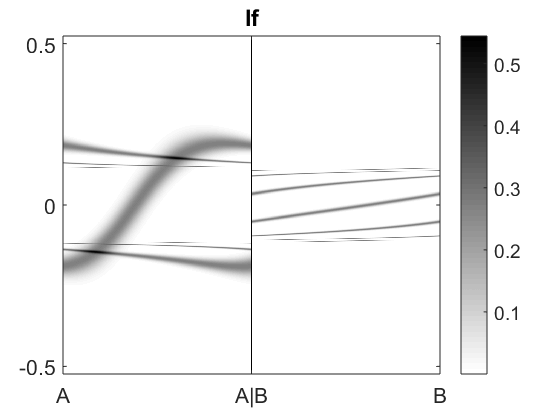}
    \includegraphics[trim=20 0 20 0, clip, height=0.28\textheight]{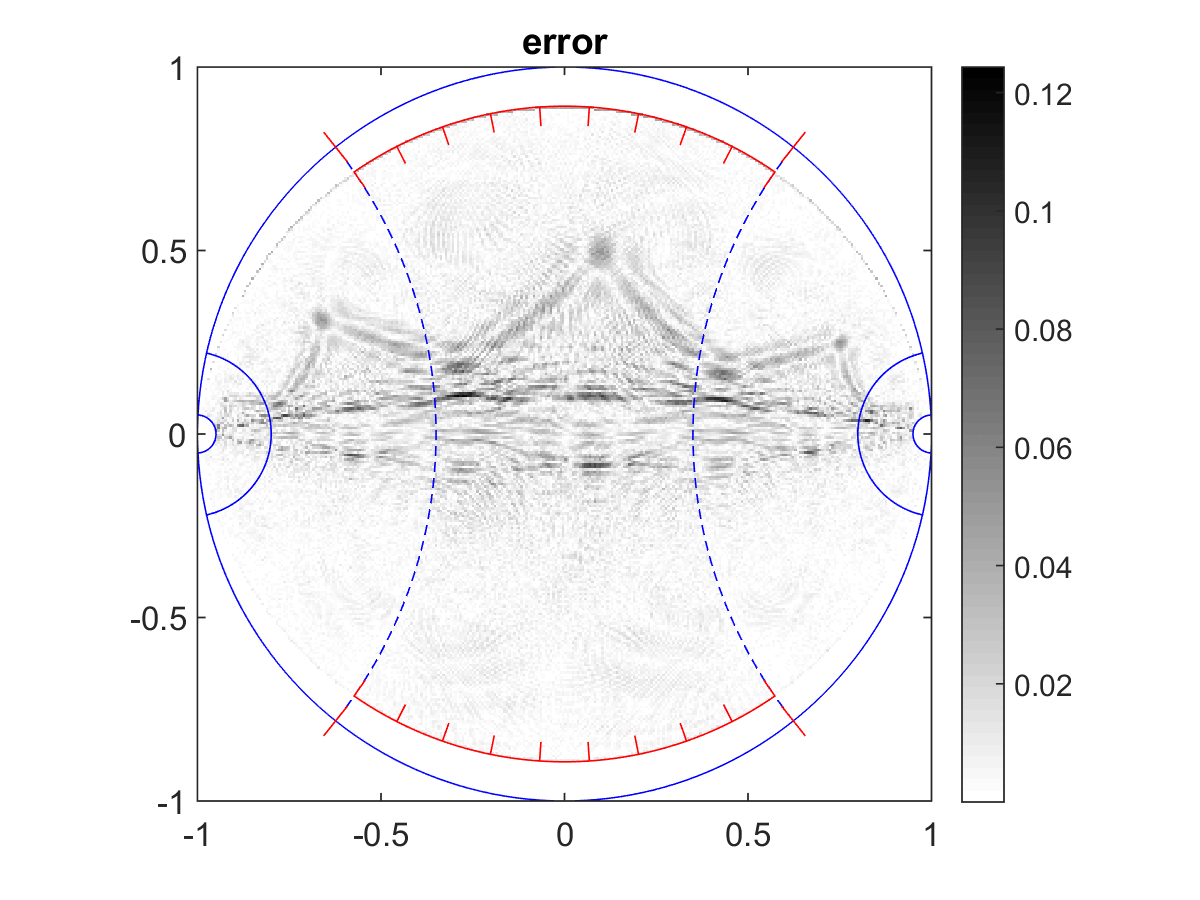}
    \caption{Experiment 1. Left/middle: ray transform of the function displayed in Fig \ref{fig:model1}. The $x$-axis represents a point on the boundary parameterized with a parameter in $[0,1]$, and the $y$-axis is the shooting direction with respect to the inner normal, an angle in $\left(\frac{-\pi}{2}, \frac{\pi}{2}\right)$. Right: pointwise error on reconstruction after applying the formula from Theorem \ref{inversion} to the data.}
    \label{fig:model1_recons}
\end{figure}

{\bf Observations:} Based on the results displayed on Fig. \ref{fig:model1_recons}, one can notice artifacts of 10\% relative size in certain areas. These artifacts lie in the region between the support of the function and the points at infinity (this is especially noticeable once we periodize the reconstruction), and we propose two possible reasons for this: 
\begin{itemize}
    \item As the computation of forward data does not use an exact computation of the geodesics, the accuracy degrades on the longer geodesics, i.e. the ones that wander near the closed one. 
    \item The data may have logarithmic singularities at trapped directions, which upon applying the Hilbert transform, may create undesirable oscillations in the vicinity of these angles. These oscillations then create artifacts upon applying the backprojection $I_1^*$, especially at the points which are such that the data is nonzero near their trapped directions (see Fig. \ref{fig:exp2_2} below). This is precisely the case when these points lie between the support of the function and the points at infinity. 
\end{itemize}

\subsection{Quotients of $\dd^2$ by Schottky groups with two generators}
We now consider examples of quotients of $\dd^2$ by Schottky groups with two generators. Keeping $x\in (-1,0)$, $a(x)$ and $T_a$ defined as above, we define the surfaces $M_1(x) = \dd^2 / \langle T_{a(x)}, T_{ia(x)} \rangle$ and $M_2(x) = \dd^2 / \langle i T_a, -i T_{ia} \rangle$. Both $M_1(x)$ and $M_2(x)$ can have the same fundamental domain, delimited by the geodesics $\gamma_{(\pm x, 0), \bfe_y}$ and $\gamma_{(0,\pm x), \bfe_x}$, though with different identifications, as represented Fig. \ref{fig:Schottky2}. 
In both examples, the trapping is now more complicated: at every point $\rmx$, the trapped directions form a fractal set at the tangent circle $S_\rmx$. In light of Experiment 1, this is expected to have quite a detrimental impact on the inversion. We study this influence by comparing noise on experiments 2 and 4, where the same model $M_1(x)$ is chosen for two different values of $x$, yielding different trapping intensities. We quantify this by the dimension of the limit set $\delta_\Gamma\in [0,1]$ of the group $\Gamma = \langle T_{a(x)}, T_{ia(x)} \rangle$. We compute a numerical approximation of $\delta_\Gamma$ by running geodesic dynamics on a large number of points inside the domain and measuring their escape rate, i.e. the proportion that remains inside as a function of time. According to \cite[Appendix B]{DyGu1}, this escape rate is of the form $V(t) = e^{-(1-\delta_\Gamma)t}$, and we use this to approximate $\delta_\Gamma$ by linearly interpolating $-\log V(t)$, possibly after throwing away transient regimes. In Experiments 2-3-4, the influx boundary is discretized uniformly into $1200$ boundary points and $400$ influx directions in the range $\left( -\frac{\pi}{6}, \frac{\pi}{6} \right)$ per boundary point.

{\bf Experiment 2.} On $M_1(-0.6)$, we pick the function in Fig. \ref{fig:exp2} (left), compute its ray transform on Fig. \ref{fig:exp2} (right) and apply the inversion formula. The reconstruction is visualized on Fig. \ref{fig:exp234} (left). For later comparison, we have computed $\delta_\Gamma \approx .49$ following the method described above.

{\bf Experiment 3.} On $M_2(-0.6)$, we pick the function in Fig. \ref{fig:exp3} (left), compute its ray transform on Fig. \ref{fig:exp3} (right) and apply the inversion formula. The reconstruction is visualized on Fig. \ref{fig:exp234} (middle).

{\bf Experiment 4.} We now repeat Experiment 2, this time on $M_1(-0.5)$, whose topology is the same, though the trapping is now more prominent (we have computed $\delta_\Gamma \approx .65$). We pick the function in Fig. \ref{fig:exp4} (left), compute its ray transform on Fig. \ref{fig:exp4} (right) and apply the inversion formula. The reconstruction is visualized on Fig. \ref{fig:exp234} (right).  

{\bf Observations.} As in Experiment 1, the same comments hold regarding the influence of trapping: the more trapping, the more singularities in the data, which create oscillations upon applying the Hilbert transform (see Fig. \ref{fig:exp2_2}), in turn yielding artifacts when applying the backprojection $I_1^*$. Comparing Experiments 2 and 4, the difference in trapping can be both seen on the singularities of the forward transforms (compare Figs. \ref{fig:exp2}, right and \ref{fig:exp4}, right), and on higher noise in reconstruction (compare Figs. \ref{fig:exp234}, left and right). 

\begin{figure}[htpb]
    \centering
    \includegraphics[trim= 30 0 20 0, clip, height=0.27\textheight]{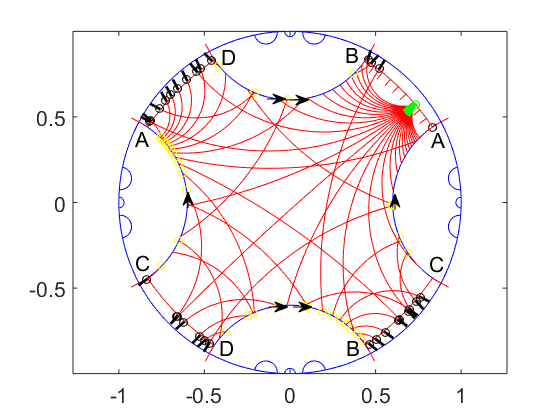}
    \includegraphics[trim= 20 0 20 0, clip, height=0.27\textheight]{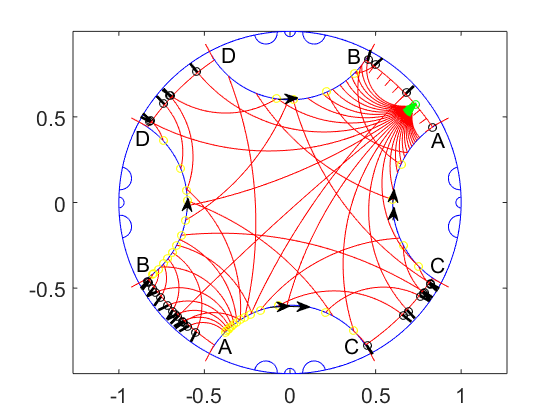}
    \caption{Quotients of $\dd$ by Schottky groups with two generators. Left: ``torus with boundary'' $M_1(-0.5)$ ($\partial M_1$ has one connected component). Right: ``pair of pants'' $M_2(-0.5)$ ($\partial M_2$ has three connected components). Identifications are marked with similar arrow patterns and the same letter identifies boundary points together. In both cases, a fan of geodesics is cast from the point on the top-right and plotted inside the fundamental domain until they exit.}
    \label{fig:Schottky2}
\end{figure}

\begin{figure}[htpb]
    \centering
    \includegraphics[trim= 30 0 40 10, clip, height=0.23\textheight]{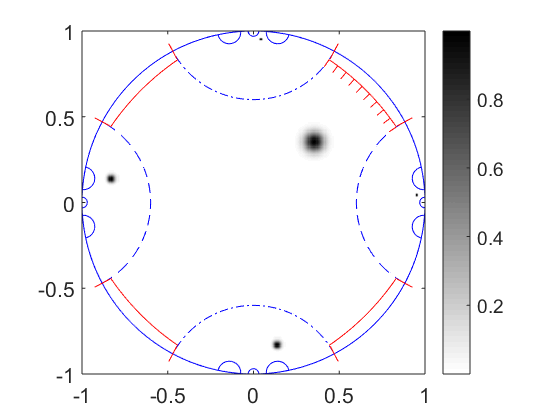}
    \includegraphics[trim= 30 0 40 25, clip, height=0.23\textheight]{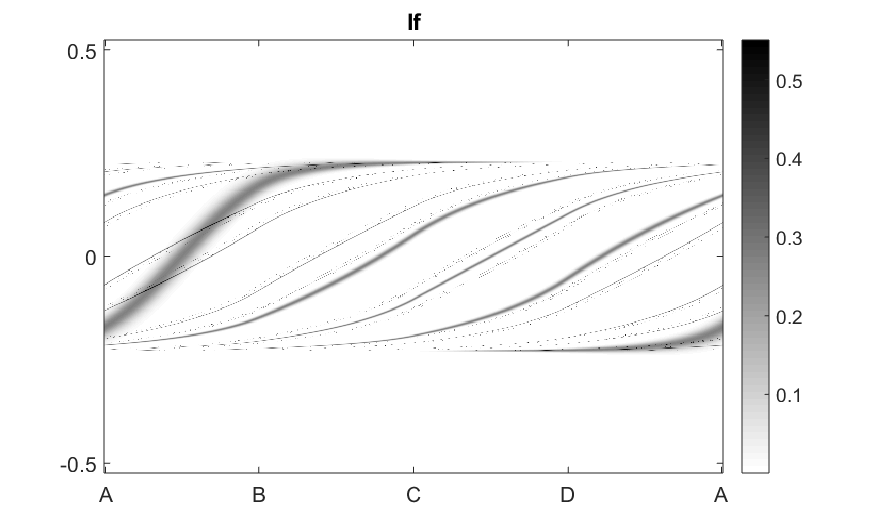}
    \caption{Experiment 2. Left: the function $f$ on $M_1(-0.6)$. Right: its ray transform $I_0 f$.}
    \label{fig:exp2}
\end{figure}

\begin{figure}[htpb]
    \centering
    \includegraphics[trim= 30 0 40 0, clip, height=0.23\textheight]{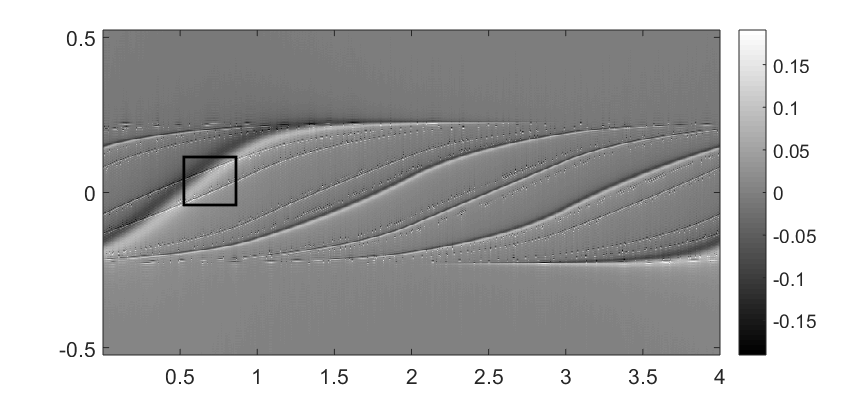}
    \includegraphics[trim= 30 0 40 0, clip, height=0.23\textheight]{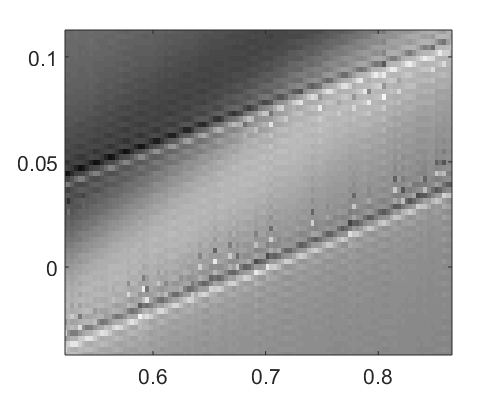}
    \caption{Experiment 2. Left: the Hilbert transform $H I^{od}_0 f$ of the data $I_0 f$ displayed on Fig. \ref{fig:exp2}, right, restricted to $\partial_+ SM$. Right: zoom into the boxed region on the left, emphasizing the oscillations which appear after applying the Hilbert transform to data with singularities.}
    \label{fig:exp2_2}
\end{figure}

\begin{figure}[htpb]
    \centering
    \includegraphics[trim= 30 0 40 10, clip, height=0.23\textheight]{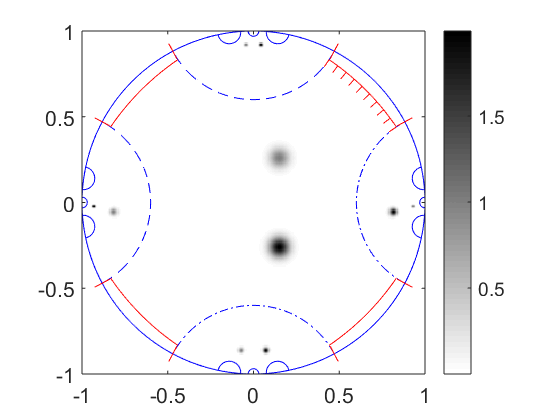}
    \includegraphics[trim= 30 0 40 25, clip, height=0.23\textheight]{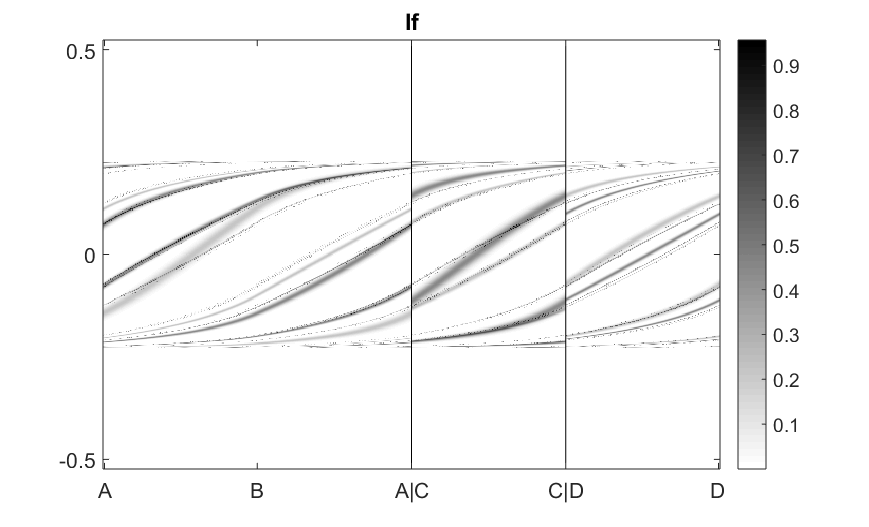}
    \caption{Experiment 3. Left: the function $f$ on $M_2(-0.6)$. Right: its ray transform $I_0f$, supported on three connected components, see Fig. \ref{fig:Schottky2} (right) for the letter positions.}
    \label{fig:exp3}
\end{figure}

\begin{figure}[htpb]
    \centering
    \includegraphics[trim= 30 0 40 10, clip, height=0.23\textheight]{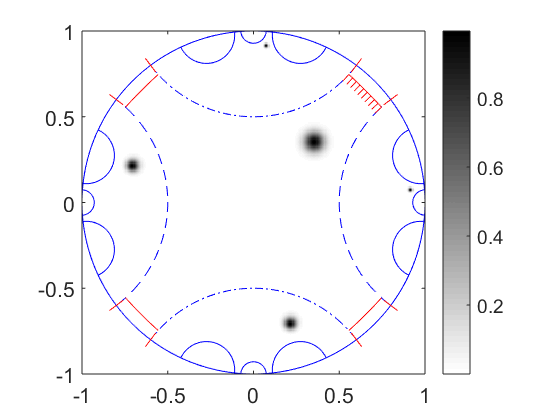}
    \includegraphics[trim= 30 0 40 25, clip, height=0.23\textheight]{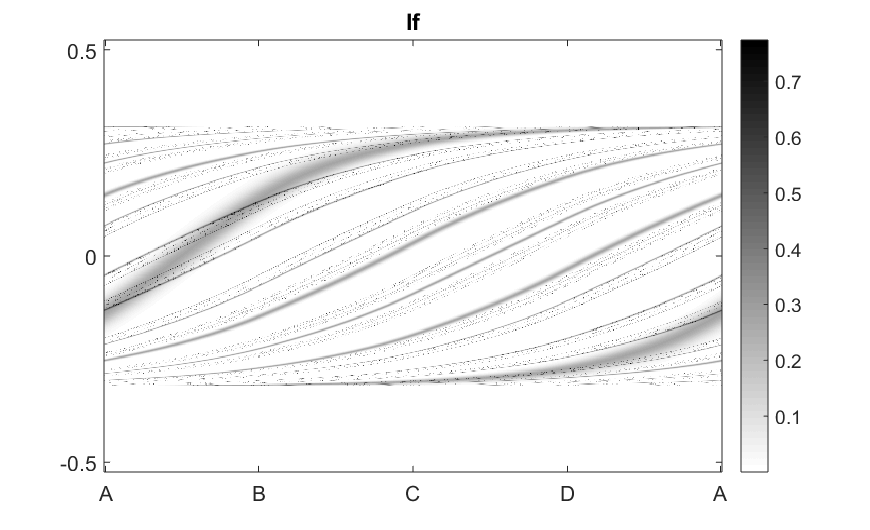}
    \caption{Experiment 4. Left: the function $f$ on $M_1(-0.5)$. Right: its ray transform $I_0 f$.}
   \label{fig:exp4}
\end{figure}

\begin{figure}[htpb]
  \centering
    \includegraphics[trim= 30 0 40 15, clip, width=0.325\textwidth]{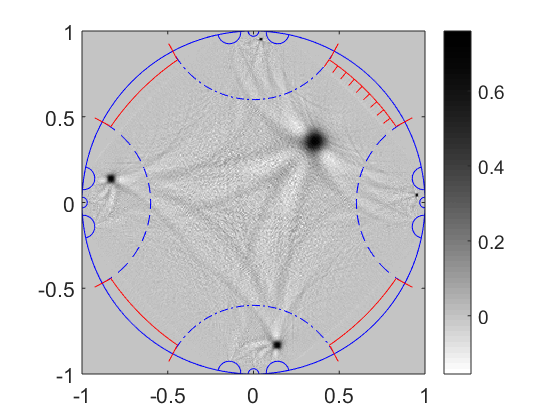}
    \includegraphics[trim= 30 0 40 15, clip, width=0.325\textwidth]{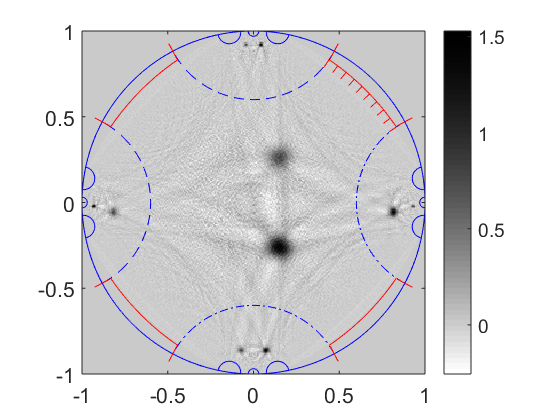}
    \includegraphics[trim= 30 0 40 15, clip, width=0.325\textwidth]{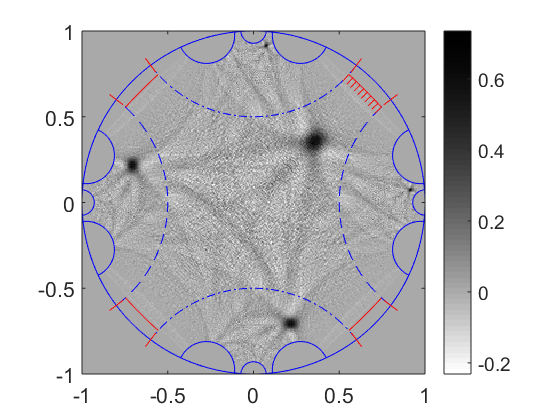}
  \caption{Reconstructions for Experiments 2 (left), 3 (middle) and 4 (right). The reconstructions have been periodized to make appear the structure in error patterns, supported between the gaussians and all points at infinity (note: color scheme is darker for visibility).}
  \label{fig:exp234}
\end{figure}

\subsection{Cylinders with variable curvature}

Let the topological cylinder $\rr/2\zz \times (-1,1)$ endowed with the metric $g(x,y) = h_\eps(y)^2 dx^2 + dy^2$, where we have defined
\[ h_\eps(y) = \cosh (y) \cosh (\eps y), \quad \eps >0\quad (\text{constant}). \]
The Gauss curvature is independent of $x$ and given by 
\[ \kappa_\eps(y)= -\frac{\pl_y^2 h_\eps(y)}{h_\eps(y)}=-1-\eps^2-2\eps \tanh(y) \tanh(\eps y), \]
so we have $-(1+\eps)^2 \le \kappa_\eps(y) \le -(1+\eps^2)$ for every $y$, and we also get 
\begin{align*}
    |d\kappa_\eps(y)| \le 2\eps ( (1-\tanh^2(y))|\tanh(\eps y)| + \eps |\tanh(y)| (1-\tanh^2(\eps y)) \le 2\eps(1+\eps). 
\end{align*}

The structure of the trapped set is similar to that of Experiment 1, with the set $\{y=0\}$ the projection on the base of the only closed geodesic and $\delta_\Gamma = 0$. We can bound $g$ in terms of the constant curvature metric $g_0$ with $\kappa_0 = 1$ via the constants $\lambda_1 = \frac{1}{\cosh\eps}$ and $\lambda_2 = 1$ as in Theorem \ref{normL^2Pi0}. The requirement $\lambda_1\lambda_2 > \max (\delta_\Gamma, \frac{1}{2})$ there gives the constraint $\eps \le \cosh^{-1} 2$. Since $\delta_\Gamma \le \frac{1}{2}$, we deduce the estimate, with $\lambda = 1-\frac{1}{\cosh \eps}$, 
\begin{align*}
    \|\Pi_0\|_{L^2(M,g)\to L^2(M,g)} \le 4 \cosh^4 \eps \left( \frac{\Gamma \left( \frac{1}{2} \left( \frac{1}{\cosh \eps} - \frac{1}{2} \right) \right) \Gamma \left( \frac{3}{4} - \frac{1}{2\cosh\eps} \right)}{\Gamma\left( \frac{1}{4} \right) \Gamma \left( \frac{3}{4} \right)} \right)^2.  
\end{align*}
Combining this estimate with the estimate in Proposition \ref{normW}, we deduce 
\begin{align*}
    \|W\|_{L^2\to L^2} \le \frac{8}{3} \eps (1+\eps) \cosh^4 \eps \left( \frac{\Gamma \left( \frac{1}{2} \left( \frac{1}{\cosh \eps} - \frac{1}{2} \right) \right) \Gamma \left( \frac{3}{4} - \frac{1}{2\cosh\eps} \right)}{\Gamma\left( \frac{1}{4} \right) \Gamma \left( \frac{3}{4} \right)} \right)^2.
\end{align*}
In particular, $\lim_{\eps \to 0} \|W\|_{L^2\to L^2} = 0$ and thus for $\eps$ small enough, inversion via a Neumann series is justified. As in \cite{Mo}, when implementing the Neumann series, the error operator $-W^2$ is not implemented directly as it would be hopeless to satisfy \eqref{formulainv0} at the discrete level. Instead, once $I_0$ and its approximate inverse (call it $A_0$, say) are discretized, we set $-W^2 = Id - A_0 I_0$ and iterate the Neumann series using this operator.

\begin{figure}[htpb]
  \centering
  \includegraphics[trim= 10 0 30 10, clip, height=0.23\textheight]{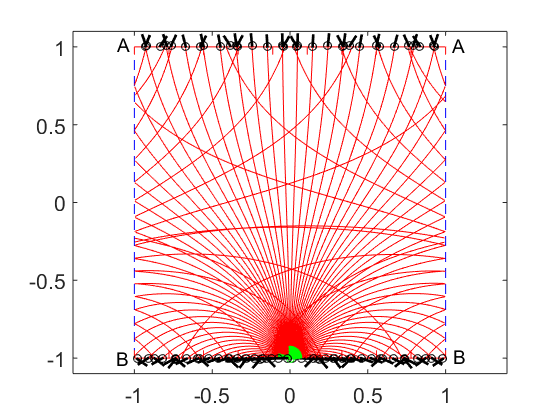} 
  \includegraphics[trim= 10 0 30 10, clip, height=0.23\textheight]{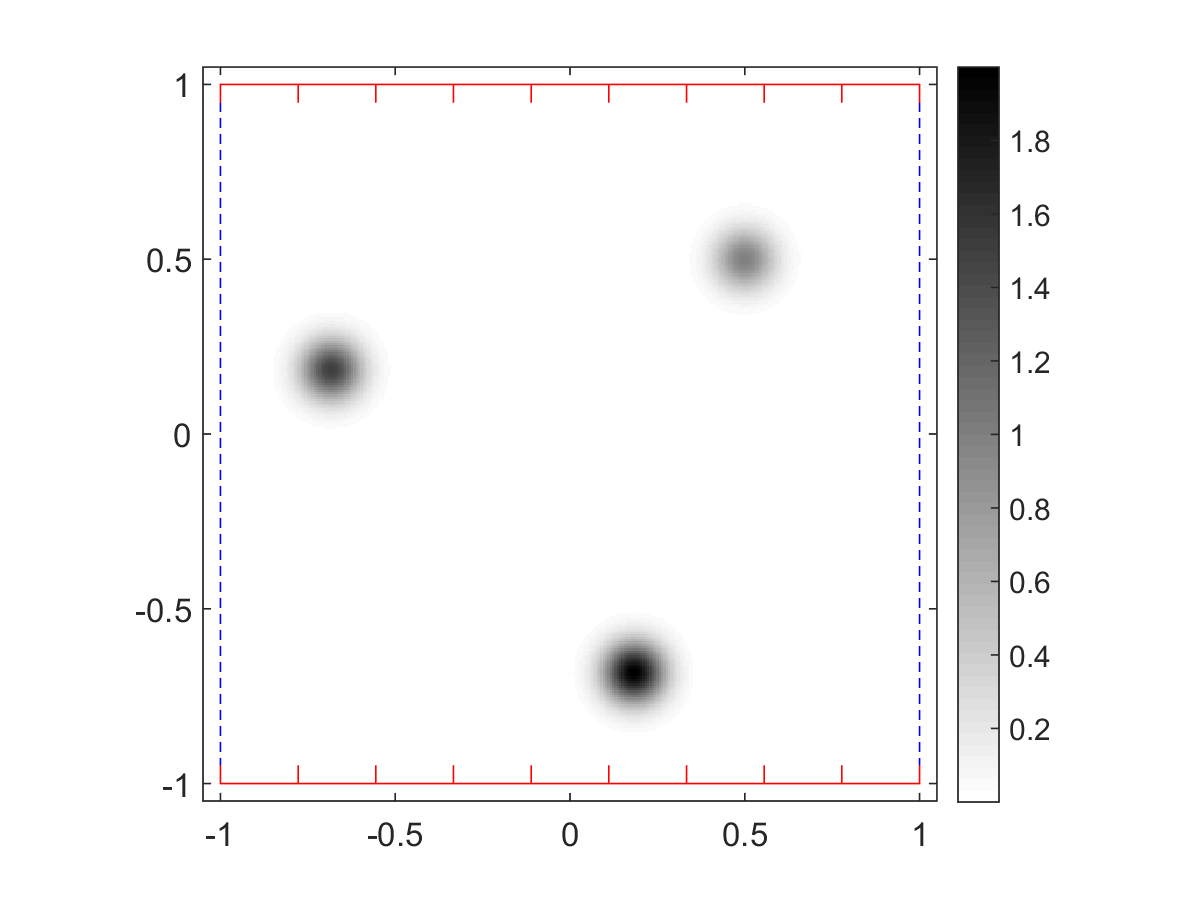}
  \caption{Experiment 5. Left: model for cylinder with variable curvature with constant $\eps = 0.4$. Right: Example of function to be imaged.}
  \label{fig:exp5}
\end{figure}

\begin{figure}[htpb]
  \centering
  \includegraphics[trim= 10 0 30 20, clip, height=0.23\textheight]{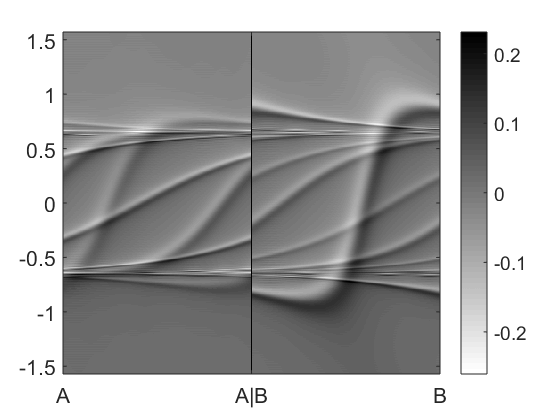}
  \includegraphics[trim= 10 0 30 28, clip, height=0.23\textheight]{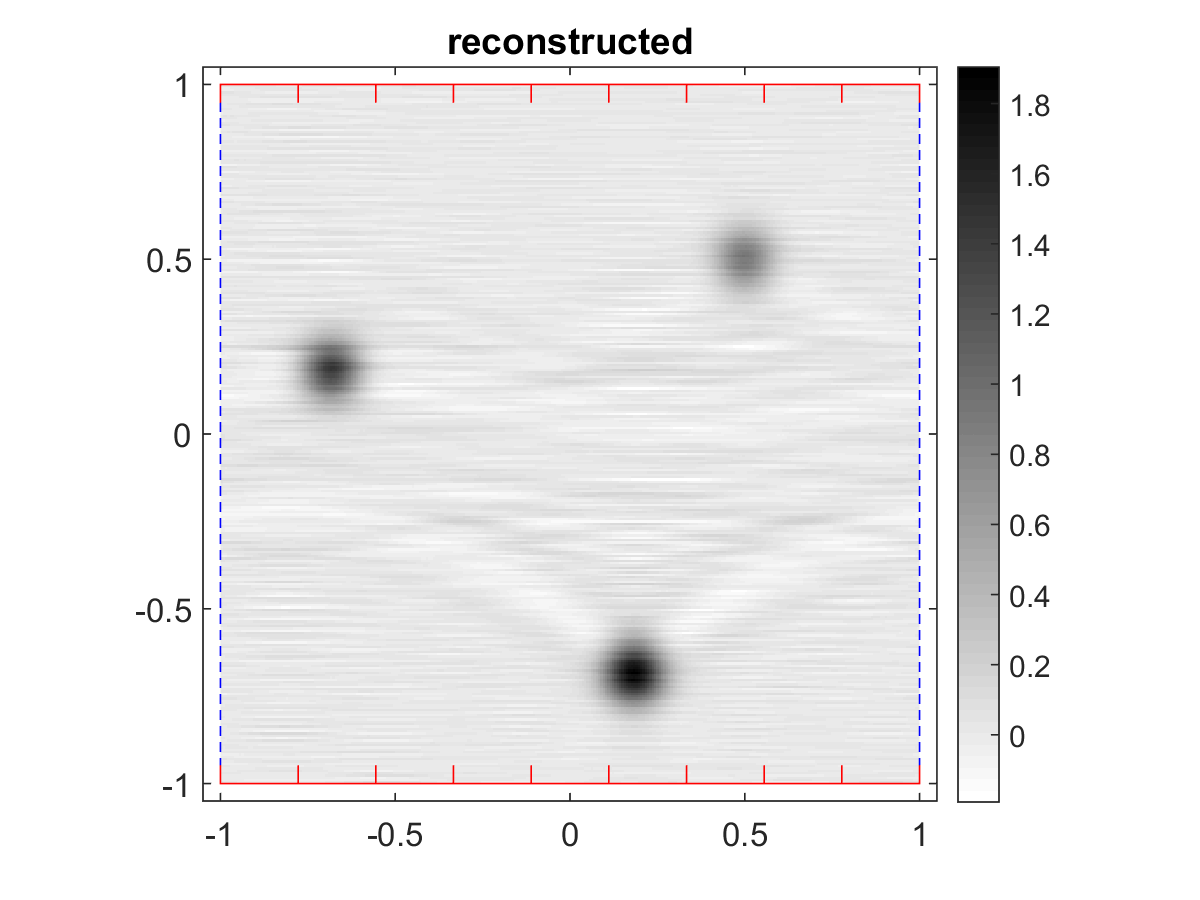}
  \includegraphics[trim= 10 0 30 26, clip, height=0.23\textheight]{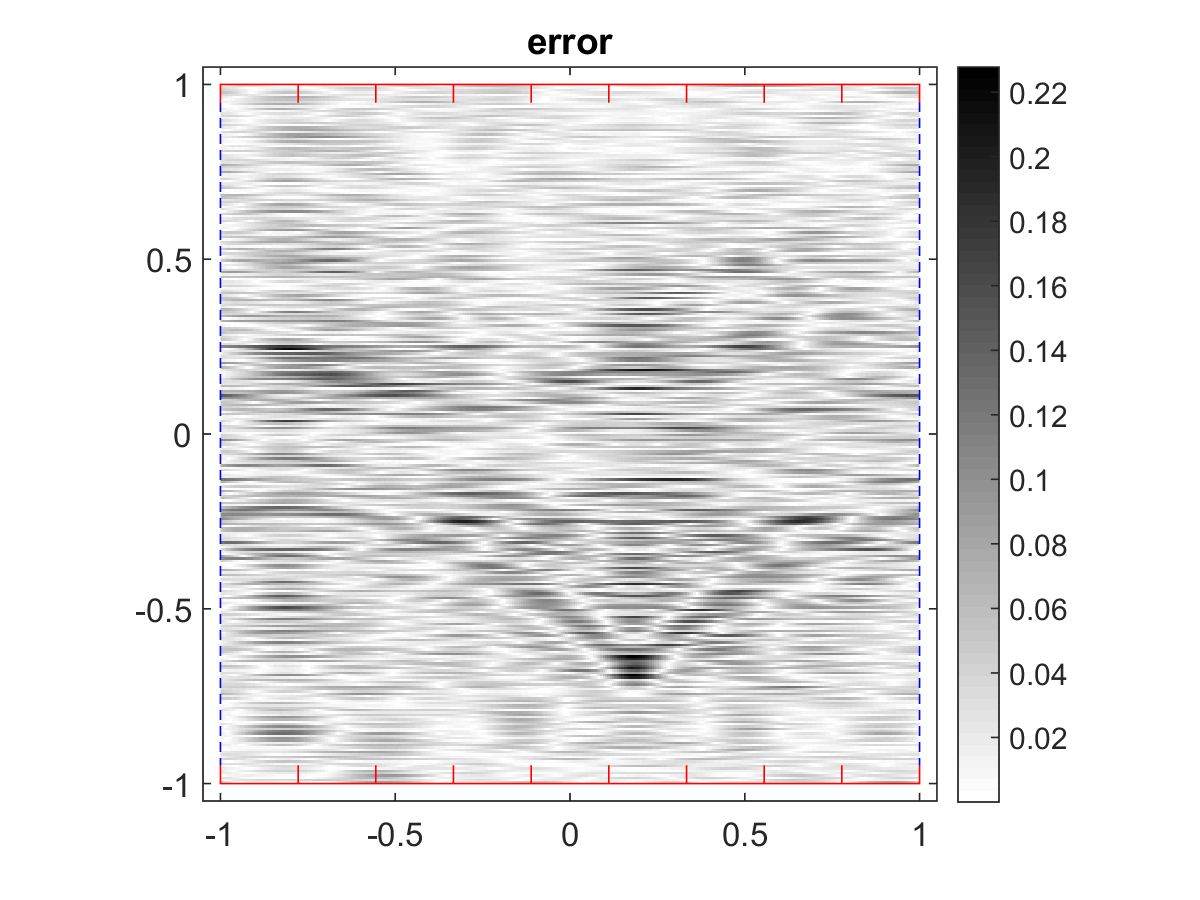}
  \includegraphics[trim= 10 0 30 26, clip, height=0.23\textheight]{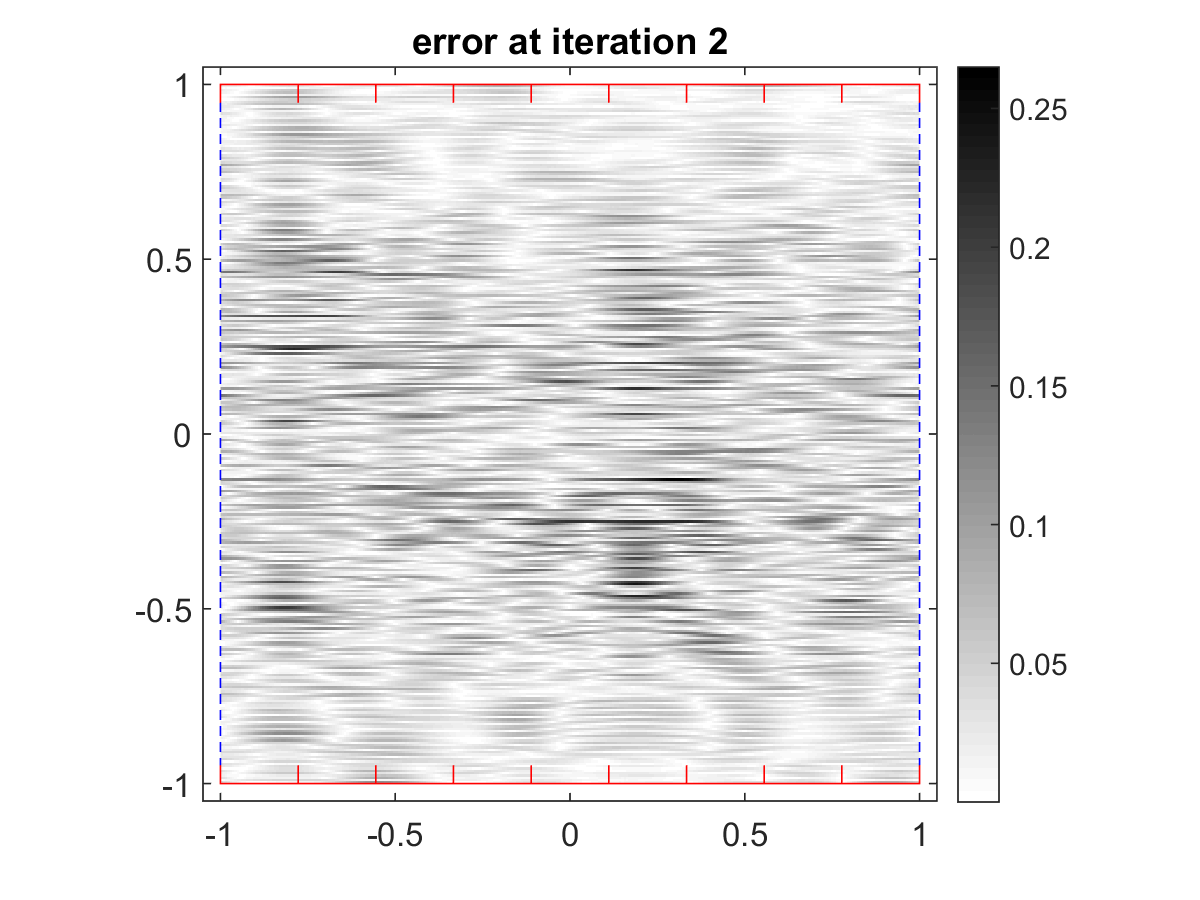}
  \caption{Experiment 5. Top-left: data $I_0 f$. Top-right: reconstructed $f$ after one-shot inversion. Bottom: pointwise error after one-shot inversion (left), then after two Neumann series iterations (right).}
  \label{fig:exp5_2}
\end{figure}

{\bf Experiment 5.} We pick the function in Fig. \ref{fig:exp5} (right), compute its ray transform on Fig. \ref{fig:exp5_2} (top-left) and implement the approximate inversion formula (visualized Fig. \ref{fig:exp5_2}, top-right), followed by two Neumann series iterations. Pointwise errors after both one-shot and corrected reconstructions are visualized Fig. \ref{fig:exp5_2} (bottom).

\subsection{Discussion}

In light of these experiments, we make the following comments: 

\begin{itemize}
  \item Despite the theoretical exactness and stability of the proposed reconstruction formulas, geometries with trapping pose some numerical challenges, related to (i) the singularities it creates in data space, which in turn create oscillations after applying the Hilbert transform, yieling reconstruction artifacts after backprojection; and (ii), the loss of accuracy on the computation of long geodesics which wander near the trapped set before exitting the domain. 
  \item As mentioned before in \cite{Mo}, another important challenge in surfaces with nonconstant curvature is to understand how to design an appropriate discretization of the influx boundary whose flow-out samples the manifold in as uniform a way as possible. 
  \item In the case of non-constant curvature, iterative corrections, which require a differentiation step at every iteration, may increase noise at high frequencies (which, as mentioned earlier, becomes more prominent in the presence of trapping, unlike the case of simple surfaces covered in \cite{Mo}). Regularization methods should then be considered. 
\end{itemize}


\appendix 
\section{Computation of $\Pi_0^\la$ in constant curvature.}

Let $M:=\Gamma\backslash \hh^2$ be a quotient of hyperbolic space by a convex co-compact group $\Gamma\subset {\rm PSL}_2(\rr)$ and denote by $\delta_\Gamma$ the exponent of the group $\Gamma$ as defined 
by \eqref{deltaGamma}. Such a manifold is non-compact but can be compactified into $\bbar{M}=\Gamma\backslash (\hh^2\cup \Omega_{\Gamma})$ where $\Omega_{\Gamma}=\mathbb{S}^1\setminus \Lambda_{\Gamma}$ is the set of discontinuity of $\Gamma$; $\hh^2\cup \Omega_\Gamma$ is the largest open set in the closed unit disk $\bbar{\mathbb{D}^2}$ where $\Gamma$ acts properly discontinuously 
(we view $\hh^2$ as the unit disk $\mathbb{D}^2$ in $\cc$).
The Laplacian $\Delta_M$ has continuous spectrum $[\frac{1}{4},\infty)$ on $L^2(M)$ and possibly finitely many eigenvalues in $(0,\frac{1}{4})$, which appear if and only if the exponent $\delta_{\Gamma}$ of $\Gamma$ satisfies $\delta_{\Gamma}>1/2$, in which case $\delta_{\Gamma}(1-\delta_{\Gamma})=\min ({\rm Spec}_{L^2}(\Delta_M))$. For further details on these topics, we refer to the book \cite{Bo}. 

In this Appendix, we compute for small $(1-\la)>\max (\delta_\Gamma,\demi)$ the operator on $M$ 
\begin{equation}\label{Pi0lambda}
\Pi_0^\la : C_c^\infty(M)\to C^\infty(M), \quad \Pi_0^\la f(x):=\int_{S_xM}\int_\rr 
\cosh(\la t)\pi_0^*f(\varphi_t(x,v))dt dS_{x}(v)
\end{equation} 
where $\varphi_t$ denotes the geodesic flow at time $t$ on the unit tangent bundle $SM$. Just like in Section 
\ref{normofPi0}, the operator $\Pi_0^\la$ makes sense and 
has integral kernel $\mc{K}^\la_M$, which can written as a converging sum on a fundamental domain $F\subset \hh^2$ of $\Gamma$ when $(1-\la)>\delta_\Gamma$
\begin{equation}\label{KlaM}
\mc{K}^{\la}_M(x,x')=\sum_{\gamma\in \Gamma}\mc{K}^{\la}(\gamma\tilde{x},\tilde{x}')),\quad x=\pi_{\Gamma}(\tilde{x}),\, x'=\pi_{\Gamma}(\tilde{x})\end{equation}
for $\tilde{x},\tilde{x}'\in F$ and $\pi_\Gamma:\hh^2\to M$  the covering map, where $\mc{K}^\la(\tilde{x},\tilde{x}')$ is the integral kernel of the operator $\til{\Pi}_0^\la$ defined by the expression 
\eqref{Pi0lambda} when $M=\hh^2$. As in Section \ref{normofPi0}, this kernel is given by 
\[\mc{K}(\tilde{x},\tilde{x}')=\frac{2\cosh(\la d_{\hh^2}(\tilde{x},\tilde{x}'))}{\sinh(d_{\hh^2}(\tilde{x},\tilde{x}'))}\]
which justifies that the sum \eqref{KlaM} above converges if $(1-\la)>\delta_\Gamma$.
The expression \eqref{Pi0lambda} can also be rewritten as 
\[\Pi_0^\la f(x):=\int_{S_xM}\int_\rr 
e^{\la t}\pi_0^*f(\varphi_t(x,v))dt dS_{x}(v)\]
using the symmetry $v\to -v$ in each fiber $S_xM$.
\begin{lemm}\label{Pi0hyp} 
On a convex co-compact quotient  $M:=\Gamma\backslash \hh^2$ with exponent of convergence given by $\delta_\Gamma$, the operator $\Pi_0^\la$ can be written when $(1-\la)>\max (\delta,\demi)$ as  
\[ \Pi_0^\la = 4\frac{\Gamma(\frac{1-2\la}{4}-\frac{i\sqrt{\Delta_{M}-\frac{1}{4}}}{2})\Gamma(\frac{1+2\la}{4}-\frac{i\sqrt{\Delta_{M}-\frac{1}{4}}}{2})\Gamma(\frac{1+2\la}{4}+\frac{i\sqrt{\Delta_{M}-\frac{1}{4}}}{2})\Gamma(\frac{1-2\la}{4}+\frac{i\sqrt{\Delta_{M}-\frac{1}{4}}}{2})}{\Gamma(\frac{1}{4}-\frac{i\sqrt{\Delta_{M}-\frac{1}{4}}}{2})\Gamma(\frac{1}{4}+\frac{i\sqrt{\Delta_{M}-\frac{1}{4}}}{2})\Gamma(\frac{3}{4}-\frac{i\sqrt{\Delta_{M}-\frac{1}{4}}}{2})\Gamma(\frac{3}{4}+\frac{i\sqrt{\Delta_{M}-\frac{1}{4}}}{2})}\]
where $\Gamma(\cdot)$ is the Euler Gamma function, and with the convention that 
$\sqrt{s(1-s)-\frac{1}{4}}=i(s-\demi)$ when $s(1-s)\in(0,\frac{1}{4})$ is an $L^2$-eigenvalue of $\Delta_{M}$ with $s\in (1/2,1)$.
The operator norm is given by 
\begin{equation}\label{bound1}\begin{gathered}
||\Pi_0^\la||_{L^2\to L^2}= 4\Big(\frac{\Gamma(\frac{1-2\la}{4})\Gamma(\frac{1+2\la}{4})}{\Gamma(\frac{1}{4})\Gamma(\frac{3}{4})}\Big)^2 \textrm{ if }\delta_{\Gamma}\leq 1/2,\\
||\Pi_0^\la||_{L^2\to L^2}= 4\frac{\Gamma(\frac{\delta_{\Gamma}-\la}{2})\Gamma(\frac{\delta_{\Gamma}+\la}{2})\Gamma(\frac{1-\la-\delta_{\Gamma}}{2})\Gamma(\frac{1+\la-\delta_{\Gamma}}{2})}
{\Gamma(\frac{1-\delta_{\Gamma}}{2})\Gamma(\frac{\delta_\Gamma}{2})\Gamma(1-\frac{\delta_{\Gamma}}{2})\Gamma(1+\frac{\delta_{\Gamma}}{2})}  
\textrm{ if }\delta_{\Gamma}\geq 1/2.
\end{gathered}\end{equation}
\end{lemm}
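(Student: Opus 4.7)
The plan is to identify $\Pi_0^\la$ as a function of $\Delta_M$, compute that function by a hyperbolic spherical transform, and then read off the operator norm from the known spectrum of $\Delta_M$. Since the kernel $\mc{K}^\la(\til{x},\til{x}') = 2\cosh(\la d_{\hh^2}(\til{x},\til{x}'))/\sinh d_{\hh^2}(\til{x},\til{x}')$ of the lifted operator $\til{\Pi}_0^\la$ on $\hh^2$ depends only on the hyperbolic distance, $\til{\Pi}_0^\la$ is a radial convolution operator, commuting with every isometry of $\hh^2$ and in particular with $\Delta_{\hh^2}$. The absolute convergence of the series \eqref{KlaM} when $1-\la>\delta_\Gamma$ allows this operator to descend to $M=\Gamma\backslash\hh^2$ as an operator $\Pi_0^\la$ commuting with $\Delta_M$, so by the functional calculus $\Pi_0^\la = h_\la(\Delta_M)$ for some measurable function $h_\la$ on $\Spec(\Delta_M)$.

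To identify $h_\la$, I would test against the spherical eigenfunctions $\phi_s(r)=P_{-s}(\cosh r)$, which satisfy $\Delta_{\hh^2}\phi_s=s(1-s)\phi_s$ and $\phi_s(0)=1$. Since $\til{\Pi}_0^\la\phi_s = h_\la(s(1-s))\phi_s$, evaluating at the base point $o$ gives the spherical transform
\begin{equation*}
h_\la(s(1-s)) \;=\; \int_{\hh^2} \mc{K}^\la(o,y)\phi_s(y)\,dy \;=\; 4\pi\int_0^\infty \cosh(\la r)\,P_{-s}(\cosh r)\,dr.
\end{equation*}
The integral on the right is then evaluated in closed form by a classical Mellin--Barnes calculation: insert the integral representation $P_{-s}(\cosh r)=\pi^{-1}\int_0^\pi(\cosh r-\sinh r\cos\theta)^{-s}d\theta$, swap orders of integration (legitimate because $\la<\demi$), and evaluate the inner $r$-integral by a Beta-type formula to produce four Gamma functions in the numerator. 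The denominator $\Gamma(s)\Gamma(1-s)$ is rewritten using the Legendre duplication $\Gamma(z)\Gamma(z+\demi)=2^{1-2z}\sqrt{\pi}\Gamma(2z)$ applied at $z=s/2$ and $z=(1-s)/2$, giving four Gamma factors. After the substitution $s=\demi+i\tau$ with $\tau=\sqrt{\Delta_M-1/4}$, one recovers the stated quotient of eight Gamma functions.

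For the operator norm, I would use that on a convex co-compact hyperbolic surface $\Spec(\Delta_M)=[\tfrac14,\infty)\cup P$, where the finite point part $P\subset(0,\tfrac14)$ is nonempty iff $\delta_\Gamma>\demi$ (Patterson--Sullivan), with bottom eigenvalue $\delta_\Gamma(1-\delta_\Gamma)$ corresponding to $s=\delta_\Gamma$. Positivity of the kernel gives $h_\la\geq 0$ on $\Spec(\Delta_M)$, and monotonicity ($h_\la$ decreasing in $\mu$) is checked either directly from the Gamma-function quotient or from the asymptotic $\phi_s(r)\sim c\,e^{-\min(s,1-s)\,r}$ as $r\to\infty$: as $s\in[\demi,1)$ increases, $\mu=s(1-s)$ decreases, $\phi_s$ decays more slowly, and $h_\la(s(1-s))$ increases. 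Hence $\|\Pi_0^\la\|_{L^2\to L^2}=h_\la(\mu_{\min})$ with $\mu_{\min}=\tfrac14$ (i.e.\ $s=\demi$) when $\delta_\Gamma\leq\demi$, and $\mu_{\min}=\delta_\Gamma(1-\delta_\Gamma)$ (i.e.\ $s=\delta_\Gamma$) otherwise, yielding the two cases in \eqref{bound1}. The technical heart of the argument is the closed-form integral in the middle step; the bookkeeping with the Legendre duplication is delicate because the statement mixes half-integer shifts such as $(1\pm 2\la)/4\pm i\sqrt{\Delta_M-1/4}/2$, and any miscount of $\sqrt{\pi}$ factors there would propagate through both the operator formula and the norm bounds.
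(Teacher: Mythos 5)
Your overall architecture coincides with the paper's: realize the lifted operator as a radial convolution on $\hh^2$, hence a function $H_\la(\Delta_{\hh^2}-\tfrac14)$ of the Laplacian, compute the multiplier explicitly as a quotient of Gamma functions, descend to the quotient, and locate the norm at the bottom of $\Spec(\Delta_M)$ by monotonicity. Your computation of the multiplier by the classical spherical (Legendre/Mellin--Barnes) transform is a legitimate substitute for the paper's route through the principal series of $\SL_2(\rr)$ and the Harish-Chandra Plancherel formula; both produce the same eight-Gamma quotient (the paper cross-checks the $\la=0$ case against Berenstein--Casadio Tarabusi).

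The genuine gap is in the descent step, which you dispatch in one sentence. First, the justification you give is not valid: an operator commuting with $\Delta_M$ need not be a function of $\Delta_M$, since the continuous spectrum (and possibly the point spectrum) has multiplicity. Second, and more substantively, even granting $\Pi_0^\la=h_\la(\Delta_M)$ for some function, nothing in your argument identifies that function with the one you computed on $\hh^2$ --- in particular its value at the exceptional eigenvalues $s(1-s)<\tfrac14$, obtained by continuing the spherical transform off the critical line, which is exactly what produces the second case of \eqref{bound1}. This transfer from $\hh^2$ to $\Gamma\backslash\hh^2$ is where most of the analytic work of the paper's proof lies: one writes $\til{\Pi}_0^\la=\frac{1}{2\pi i}\int_{\mc{C}_\eps}H_\la(z)R_{\hh^2}(z)\,dz$ over a contour on which $H_\la$ is holomorphic and $\mc{O}((1+|z|)^{-1/2})$, uses the convergence of $\sum_{\gamma\in\Gamma} R_{\hh^2}(r^2;\tilde x,\gamma\tilde x')$ to $R_M(r^2;x,x')$ for ${\rm Im}(r)>\delta_\Gamma-\tfrac12$, and proves weighted $L^2\to H^{-2}$ resolvent estimates to justify exchanging the $\Gamma$-sum with the contour integral. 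Without this (or an equivalent argument, e.g.\ the mean-value property of radial convolutions against arbitrary moderate-growth eigenfunctions, applied to the lifted $L^2$-eigenfunctions and to the continuous spectrum), the identity $\Pi_0^\la=H_\la(\Delta_M-\tfrac14)$, and hence both norm formulas, remain unproved. A smaller point: the monotonicity must be established separately on the continuous spectrum ($h_\la(\tfrac14+\tau^2)$ decreasing in $\tau\geq 0$, where your positivity heuristic fails because $\phi_{1/2+i\tau}$ oscillates) and on the exceptional interval; the paper does both via the digamma series, and your "check directly from the Gamma quotient" needs to be carried out.
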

\begin{proof} 
Since its Schwartz kernel is a function of $d_{\hh^2}(\cdot,\cdot)$ decaying like $e^{-d}$, 
the operator $\til{\Pi}_0^\la$ is a function $H_\la(\Delta_{\hh^2}-\tfrac{1}{4})$ 
of the hyperbolic Laplacian $\Delta_{\hh^2}$ (see for example \cite[Section 1.8]{Iw}).
Notice that the series defining $\mc{K}^\la_M$ is absolutely convergent  on compact sets of $\hh^2\x\hh^2$ 
since $\delta_{\Gamma}<1$ if $\Gamma$ is convex co-compact.
The operator $\til{\Pi}^\la_0$ is bounded on $L^2(\hh^2)$ if $\la<1/2$: indeed, this is an operator in the class 
$\Psi_0^{-1,(1-\la),(1-\la)}(\hh^2)$ of the $0$-calculus of pseudo-differential operators introduced by Mazzeo-Melrose \cite{MaMe} and the $L^2$-boundedness on $\hh^2$ follows for instance from Mazzeo 
\cite[Theorem 3.25]{Ma}. 
To compute the function $H_\la$, we use the representation theory of $G:={\rm SL}_2(\rr)$, following \cite{La,AnZe}. The (time reversal) principal series representations are unitary representations into $L^2(\rr,dx)$ defined as follows 
\[ \mc{P}_{ir}: 
\begin{bmatrix}
    a   & b \\
    c & d     
\end{bmatrix}
\in G\mapsto  \Big(f(x)\mapsto |-bx+d|^{-1-2ir}f\Big(\frac{ax-c}{-bx+d}\Big)\Big).
\]
Let $u\in C_c^\infty(\hh^2)$, then $u_0:=\pi_0^*u\in C_c^\infty(G)$ (if
we identify $S\hh^2\simeq G$ in the canonical way) and  to compute $\til{\Pi}_0f$ we will use 
the Plancherel formula of Harish-Chandra \cite{Ha}  
\[ u_0(1)=\int_{\rr} r\tanh(\tfrac{\pi r}{2}) {\rm Tr}\Big(\int_{G}u_0(g)\mc{P}_{ir}(g) dg\Big)dr.\]
There is a unique $K$-invariant vector of norm $1$ in $L^2(\rr)$ where $K$ is the action of 
${\rm SO}(2)\subset {\rm SL}_2(\rr)$ via the representation $\mc{P}_{ir}$, it is given by 
\[ \phi_{ir}(x)=\frac{1}{\sqrt{\pi}}(1+x^2)^{-\demi-ir}.\]
Since $u_0$ is invariant by ${\rm SO}(2)$, to consider the action of the right multiplication by an 
element $e^{tA}$  on $u_0$ with $A\in sl_2(\rr)$, it suffices to consider the action of $\mc{P}_{ir}(e^{tA})$ on $\phi_{ir}$.
The pull-back by the geodesic flow at time $t$ corresponds, in each  
unitary representation $\mc{P}_{ir}$, to the unitary operator  on $L^2(\rr)$ 
 \[ \mc{P}_{ir}(e^{tX}):  f(x)\mapsto e^{t(\demi+ir)}f(e^tx),\quad \textrm{ if }X=\begin{bmatrix}
    \demi   & 0 \\
    0 & -\demi   
\end{bmatrix}\in sl_2(\rr).\] 
The operator $\til{\Pi}^\la_0$ can be written as 
\[ \til{\Pi}^\la_{0}u(x)=\frac{1}{2\pi}\int_{0}^{2\pi}\Big(\int_{-\infty}^\infty e^{\la t}
u_0(h_xe^{tX}e^{sV})\,  dt\Big) ds \textrm{ if }V:=
\begin{bmatrix}
    0   & -1 \\
    1 & 0   
\end{bmatrix},\]
where $h_x\in {\rm SL}_2(\rr)$ is any element which projects to 
$x\in \hh^2={\rm SL}_2(\rr)/{\rm SO}(2)$.
Since the average in the compact group ${\rm SO}(2)$ amounts to project onto $\phi_{ir}$, 
the operator $\til{\Pi}_0^\la$ is unitarily equivalent 
to multiplication on the representation space $L^2(\rr)$ by
\[\int_{-\infty}^\infty \phi_{-ir}(x)\int_{-\infty}^\infty e^{t(\demi+\la+ir)}\phi_{ir}(e^{t}x)dt dx\] 
through the representation $\mc{P}_{ir}$.
By changing variable and by parity in $x$, this term is of the form $\frac{2}{\pi}F_\la(r)F_{-\la}(-r)$ with 
\[ \begin{split}
F_\la(r):=\int_{0}^{\infty}x^{-\demi-ir-\la}(1+x^2)^{-\demi+ir}dx=&
\frac{\Gamma(\frac{1}{4}-\frac{ir}{2}-\frac{\la}{2})
\Gamma(\frac{1}{4}-\frac{ir}{2}+\frac{\la}{2})}{\Gamma(\demi-ir)}\\
=&\sqrt{2\pi}2^{ir}\frac{\Gamma(\frac{1}{4}-\frac{ir}{2}-\frac{\la}{2})
\Gamma(\frac{1}{4}-\frac{ir}{2}+\frac{\la}{2})}{\Gamma(\frac{1}{4}-\frac{ir}{2})\Gamma(\frac{3}{4}-\frac{ir}{2})}. 
\end{split}\]
Notice that $F_\la(r)=F_{-\la}(r)$ and that $\frac{2}{\pi}F_\la(r)F_{-\la}(-r)$ extends holomorphically in each strip $|{\rm Im}(r)|<1/2-\la$ 
and it can be written under the form $H_\la(r^2)=\frac{2}{\pi}F_\la(r)F_\la(-r)$ for some $H_\la$ holomorphic 
in the region containing $\rr^+$ and bounded by the parabola ${\rm Re}(z)=({\rm Im}(z))^2-(1/2-\la)^2$.
The operator $\til{\Pi}_0^\la$ can then be written $\til{\Pi}^\la_0= H_\la(\Delta_{\hh^2}-\tfrac{1}{4})$ with
\[H_\la(z)=
4\frac{\Gamma(\frac{1}{4}-\frac{\la}{2}-\frac{i\sqrt{z}}{2})\Gamma(\frac{1}{4}+\frac{\la}{2}+\frac{i\sqrt{z}}{2})
\Gamma(\frac{1}{4}-\frac{\la}{2}+\frac{i\sqrt{z}}{2})\Gamma(\frac{1}{4}+\frac{\la}{2}-\frac{i\sqrt{z}}{2})}{\Gamma(\frac{3}{4}-\frac{i\sqrt{z}}{2})^2\Gamma(\frac{3}{4}+\frac{i\sqrt{z}}{2})^2}.\]
The expression of $\til{\Pi}^{\la}_0$ for $\la=0$ matches with the formula found by 
\cite[p. 623]{BeCa}. 
In the image of the strip $|{\rm Im}(r)|<1/2-\la -\eps$ by $r\mapsto z=r^2$  for $\eps>0$ small enough
\[ |H_\la (z)|\leq C_{\eps}(1+|z|)^{-1/2}\]
for some $C_\eps>0$ depending only on $\eps$.  We can then write, using Cauchy formula
\begin{equation}\label{integralcontour} 
\til{\Pi}_0^\la= \frac{1}{2\pi i}\int_{\mc{C}_\eps} H_\la(z)R_{\hh^2}(z) dz ,\quad R_{\hh^2}(z):=(\Delta_{\hh^2}-\tfrac{1}{4}-z)^{-1}
\end{equation}
where $\mc{C}_\eps$ is the contour given by the image of ${\rm Im}(r)=\demi-\la-\eps$ by the map $r\mapsto z=r^2$, where $\eps>0$ is chosen small.
The contour is parametrized by $z=\pm i(1-\la-2\eps)\sqrt{t}+t-(\demi-\la-\eps)^2$ with $t\in [0,\infty)$ and the 
$H^1(\hh^2)\to L^2(\hh^2)$ norm of $R_{\hh^2}(z)$ is 
\[ ||R_{\hh^2}(z)||_{H^1\to L^2}= \mc{O}((1+t)^{-1})\] 
showing that the integral \eqref{integralcontour} converges in $H^1\to L^2$ norm.
Now the resolvent  $R_M(r^2)=(\Delta_M-1/4-r^2)^{-1}$ has for integral kernel, viewed on the fundamental domain $F$, 
\[ R_M(r^2; \tilde{x},\tilde{x}')=\sum_{\gamma\in \Gamma} R_{\hh^2}(r^2; \tilde{x},\gamma\tilde{x}')\]
when ${\rm Im}(r)>\delta_\Gamma-1/2$, and the sum is uniformly convergent on compact sets of $F\x F$
for $r$ in that half-space: the proof is done in \cite{Pat2} or \cite[Proposition 2.2]{GuNa}, and follows from the uniform convergence of Poincar\'e series $\sum_{\gamma\in \Gamma}e^{-sd_{\hh^2}(\tilde{x},\gamma\tilde{x}')}$
for $\tilde{x},\tilde{x}'$ on compact sets and from the
 pointwise estimate for $\eps \leq {\rm Im}(r)\leq \demi-\eps$ (which follows from the explicit formula for $R_{\hh^2}(r^2)$ given for instance in \cite{GuZw})
\[ |R_{\hh^2}(r^2;\tilde{x},\tilde{x}')|\leq C_{\eta,\eps}e^{-(1/2+{\rm Im}(r))d_{\hh^2}(\tilde{x},\tilde{x}')} \quad 
\textrm{when } d_{\hh^2}(\tilde{x},\tilde{x}')>\eta\]
if $\eta>0$ is fixed and $C_{\eta,\eps}$ depends only on $\eta$ and $\eps$. This implies that, for $K_1$ and $K_2$ two relatively compact open sets of $\hh^2$ so that $d_{\hh^2}(K_1,K_2)>1$, 
there is $C>0$ depending only on the volume of $K_1$ and $K_2$ 
such that for all $f\in L^2(K_1)$
\[ ||R_{\hh^2}(r^2)f||_{L^2(K_2)}\leq Ce^{-(1/2+{\rm Im}(r))d_{\hh^2}(K_1,K_2)}||f||_{L^2(K_1)}.\]
Now, using that for $f\in L^2$ supported in a compact set $K_1$ we have  
\[ (\Delta_{\hh^2}+\tfrac{3}{4})\chi R_{\hh^2}(r^2)f-[\Delta_{\hh^2},\chi]R_{\hh^2}(r^2)f=
(1+r^2)\chi R_{\hh^2}(r^2)f\]
for each $\chi\in C_c^\infty(\hh^2)$ supported in a relatively compact set $K_2$ 
with $K_2\cap K_1=\emptyset$, we get 
\[ \begin{split}
||\chi R_{\hh^2}(r^2)f||_{H^{-2}(K_2)}\leq &\frac{C'}{|r|^2-1}||R_{\hh^2}(r^2)f||_{L^2(K_2)}\\
&\leq \frac{CC'}{|r|^2-1}e^{-(\demi+{\rm Im}(r))d_{\hh^2}(K_1,K_2)}||f||_{L^2(K_1)}
\end{split}\]
for $|r|>1$ where $C'$ depends only on $||\chi||_{C^1}$. This estimate together with the Poincar\'e series convergence implies that for each relatively compact open set $K\subset F$,  
$\tilde{x},\tilde{x}'\in K$ and $\pi_\Gamma(\tilde{x})=x,\pi_{\Gamma}(\tilde{x}')=x'$,
\[\Pi_0^\la(x,\gamma x')= \frac{1}{2\pi i}\int_{\mc{C}_\eps} H_\la(z)\sum_{\gamma\in\Gamma}R_{\hh^2}(z,\tilde{x},\gamma\tilde{x}') dz =\frac{1}{2\pi i}\int_{\mc{C}_\eps} H_\la(z)R_{M}(z,x,x') dz \]
converges in the sense of Schwartz kernels of bounded operator $L^2(K)\to H^{-2}(K)$. 
This shows that for $(1-\la)>\max(\demi,\delta)$ 
\[\Pi^\la_0 = H_\la(\Delta_M-1/4)\]
where $H_\la$ is the holomorphic function defined before. Now the bottom of the spectrum of $\Delta_M$
corresponds to $r=i\max(\delta_{\Gamma}-\demi, 0)$. Let us show that the function 
$\theta: t\in [0,\infty)\mapsto H_\la(4t^2)\in (0,\infty)$ is decreasing: we have 
\[ \frac{\theta'(t)}{\theta(t)}=-2({\rm Im}(\Psi(\tfrac{1-2\la}{4}+it))+{\rm Im}(\Psi(\tfrac{1+2\la}{4}+it))
-{\rm Im}(\Psi(\tfrac{3}{4}+it)-{\rm Im}(\Psi(\tfrac{1}{4}+it)))
\]  
where $\Psi(z)=\Gamma'(z)/\Gamma(z)$ is the digamma function, and using its series expansion $\Psi(z)=-e+
\sum_{n=0}^{\infty}(\frac{1}{n+1}-\frac{1}{n+z})$ with $e$ the Euler constant, we get for $t\geq 0$
\[ \frac{\theta'(t)}{\theta(t)}=-2t\Big(\sum_{n=0}^\infty \frac{1}{(n+\frac{1-2\la}{4})^2+t^2}+\frac{1}{(n+\frac{1+2\la}{4})^2+t^2}-\frac{1}{(n+\frac{3}{4})^2+t^2}-\frac{1}{(n+\frac{1}{4})^2+t^2}\Big)\]
is negative.
Similarly,  the function $\rho: t\mapsto \frac{\Gamma(\frac{1-2\la}{4}-t)\Gamma(\frac{1-2\la}{4}+t)\Gamma(\frac{1+2\la}{4}+t)\Gamma(\frac{1+2\la}{4}-t)}{\Gamma(\frac{3}{4}-t)\Gamma(\frac{1}{4}-t)\Gamma(\frac{3}{4}+t)\Gamma(\frac{1}{4}+t)}$ is increasing in $t\in [0,\tfrac{1-2\la}{4})$: indeed, one has 
\[ \begin{split}
\frac{\rho'(t)}{\rho(t)}=& \Psi(\tfrac{1-2\la}{4}+t)-\Psi(\tfrac{1-2\la}{4}-t)+\Psi(\tfrac{1+2\la}{4}+t)-\Psi(\tfrac{1+2\la}{4}-t)\\
&+ \Psi(\tfrac{3}{4}-t)-\Psi(\tfrac{3}{4}+t)+
\Psi(\tfrac{1}{4}-t)-\Psi(\tfrac{1}{4}+t)
\end{split}\]  
thus using the series expansion of $\Psi(z)$ and factorizing we obtain for $t\in [0,1/4)$
\[ \frac{\rho'(t)}{\rho(t)}=2t\sum_{n=0}^\infty 
\frac{(n+\tfrac{3}{4})^2-(n+\tfrac{1+2\la}{4})^2}
{((n+\tfrac{3}{4})^2-t^2)((n+\tfrac{1+2\la}{4})^2-t^2)}+
\frac{(n+\tfrac{1}{4})^2-(n+\tfrac{1-2\la}{4})^2}
{((n+\tfrac{1}{4})^2-t^2)((n+\tfrac{1-2\la}{4})^2-t^2)}
\geq 0.
\] 
From the spectral theorem, this implies that the $L^2(M)\to L^2(M)$ norm of 
$\Pi_0^\la$ is obtained by evaluating $H_\la$ at either $0$ if $\delta_{\Gamma}\leq 1/2$ or at $i(\delta_{\Gamma}-\demi)$ if $\delta_{\Gamma}>1/2$. This ends the proof.
\end{proof}

\begin{rem} 
Using a similar analysis, the decomposition of $L^2(\Gamma\backslash {\rm SL}_2(\rr))$
into irreducible representations when $\Gamma$ is co-compact 
(see for instance \cite{La,AnZe}) proves that the operator $\Pi_0$ defined in \cite{Gu1} 
is given on a compact hyperbolic manifold $M=\Gamma\backslash \hh^2$ by 
the same formula as in Lemma \ref{Pi0hyp}, but setting $\la=0$ and replacing $\Delta_M$ by 
$\Delta_M(1- P_0)$ if $P_0$ is the orthogonal projection onto $\ker \Delta_M$ in $L^2(M)$.  
\end{rem}

\end{document}